\documentclass[preprint,12pt,times]{elsarticle}

\usepackage{amssymb}
\usepackage{amsmath}
\usepackage{amsthm}

\usepackage{geometry}
\usepackage{setspace}
\newtheorem{theorem}{Theorem}[section]

\newtheorem{lemma}[theorem]{Lemma}
\newtheorem{proposition}[theorem]{Proposition}
\theoremstyle{definition}

\theoremstyle{remark}
\newtheorem{remark}[theorem]{Remark}

\numberwithin{equation}{section}
\usepackage[colorlinks, linkcolor=blue, anchorcolor=blue, citecolor=blue, urlcolor=blue]{hyperref}
\usepackage{color}

\begin{document}

\begin{frontmatter}

\title{Weighted Ces\`aro type operators on the weighted Bergman spaces in the unit ball} %% Article title

\author{Jiaxin Pan\fnref{label1}}
\ead{jxpan0209@163.com}

\author{Cezhong Tong\fnref{label1}}
\ead{ctong@hebut.edu.cn; cezhongtong@hotmail.com}

\author{Zicong Yang\fnref{label1}} 
\ead{zc25@hebut.edu.cn; zicongyang@126.com}

\affiliation[label1]{Institute of Mathematics, Hebei University of Technology, Tianjin 300401, China}

    \begin{abstract}
In this paper, we study weighted Ces\`aro type operators
\[
\mathcal C_{\mu,\beta}^{\xi}:A_{\alpha}^{p}(\mathbb B_n)
\longrightarrow A_{\alpha}^{q}(\mathbb B_n)
\]
induced by measures on $[0,1)$, and obtain boundedness characterizations throughout the range
$0<p,q\le\infty$. We first extend the corresponding results on the unit disk to the unit ball in the range
$1\le p\le q<\infty$. By means of atomic decomposition, we further treat the case
$0<p<1$ and $p\le q<\infty$, thereby obtaining a complete characterization for
$0<p\le q<\infty$. The latter result is new even in one dimension. For the range $0<q<p\le\infty$, we characterize the boundedness in terms of an integral
condition involving the tail function of the inducing measure. This result is also new
even on the unit disk. Finally, we characterize the boundedness of the corresponding operators when the target
space is $H^\infty(\mathbb B_n)$.
\end{abstract}

\begin{keyword}

Weighted Ces\`aro type operators \sep weighted Bergman spaces \sep Carleson measures

\MSC[2020] 47B38; 30H20; 47B34

\end{keyword}

\end{frontmatter}

\section{Introduction}

Let $\mathbb C^n$ be the $n$-dimensional complex Euclidean space. Denote by
\[\langle z,w\rangle=\sum\limits_{k=1}^n z_k\overline{ w}_k\quad \text{and}\quad |z|=\sqrt{\langle z,z\rangle}\]  for $z=(z_1,\dots,z_n)$ and $w=(w_1,\dots,w_n)$ in $\mathbb C^n$. 
Let $\mathbb B_n=\{z\in\mathbb C^n:|z|<1\}$ be the open unit ball in $\mathbb C^n$, and $\partial \mathbb B_n$ be its boundary. When $n=1$, write $\mathbb D:=\mathbb{B}_1$ for the unit disk in the complex plane $\mathbb{C}$. We use ${\rm d}v$ to represent the normalized volume measure on $\mathbb B_n$. For $\alpha>-1$, the normalized weighted volume measure is denoted by
\[
{\rm d}v_\alpha(z)=c_\alpha(1-|z|^2)^\alpha {\rm d} v(z),
\]
where $c_{\alpha}=\frac{\Gamma(n+1+\alpha)}{n!\Gamma(\alpha+1)}$ such that $v_\alpha(\mathbb B_n)=1$. 

Let $H(\mathbb B_n)$ be the
space of holomorphic functions on $\mathbb B_n$.
For $0<p<\infty$, denote by
$L_\alpha^p(\mathbb{B}_n)=L^p(\mathbb B_n,{\rm d}v_\alpha)$ the space of $p$-th integrable functions with the norm $\|\cdot\|_{p,\alpha}$, and by
$A_\alpha^p(\mathbb{B}_n)=L_\alpha^p(\mathbb{B}_n)\cap H(\mathbb B_n)$ the weighted Bergman spaces over the unit ball. We also write
$H^\infty(\mathbb B_n)$ for the space of bounded holomorphic functions on
$\mathbb B_n$, equipped with the supremum norm $\|\cdot\|_\infty$.

In this paper, we focus on the Ces\`aro operator, which is classical in complex analysis and functional analysis. The systematic study of Ces\`aro operators goes back to the work of Brown,
Halmos and Shields \cite{BHS1965}. Their boundedness and related
operator-theoretic properties on Hardy and Bergman spaces were later studied
by Andersen \cite{Andersen1996}, Siskakis
\cite{Siskakis1987,Siskakis1990,Siskakis1996}, Stempak
\cite{Stempak1994}, and others.
Closely related integration operators on Bergman spaces were studied by
Aleman and Siskakis \cite{AlemanSiskakis1997}.
In recent years, Ces\`aro type operators induced by measures have
attracted considerable attention, see for example
\cite{BaoSunWulan2022, GGM2022,GalGirMer2023}. In the unit disk, these
operators typically have the form
\[
    \mathcal C_{\mu,\beta} f(z)=\int_{[0,1)} \frac{f(tz)}{(1-tz)^\beta}\,{\rm d}\mu(t),
    \quad z\in\mathbb D,
\]
where $\mu$ is a positive Borel measure on $[0,1)$.

On the weighted Bergman space over the unit disk $A^p_\alpha(\mathbb D)$, Galanopoulos, Siskakis and Zhao
\cite{GSZ2025} characterized the boundedness of the weighted Ces\`aro type operator $\mathcal{C}_{\mu,\beta}$ from $A_{\alpha}^p(\mathbb{D})$ to $A_{\alpha}^q(\mathbb{D})$ for $1\leq p\leq q<\infty$ in terms of Carleson measures.
Related works on the unit disk have also been developed for Dirichlet and mixed norm spaces, see \cite{BlascoMas2025, GalMasMer2023,JinTang2022}.

In the several-variable setting, extended or generalized Ces\`aro type
operators and related integral operators on the unit ball were studied in
\cite{ChangLiStevic2007,Hu2003MixedNorm,Hu2004Bergman,LiStevic2009}. However, these works mainly concern classical or extended Ces\`aro operators rather than the measure-induced operators considered here.

Motivated by the results above, we introduce the weighted Ces\'aro type operator on the unit ball. Fix a boundary direction $\xi\in\partial\mathbb B_n$ and define
\[
    \mathcal C_{\mu,\beta}^{\xi}f(z)=\int_{[0,1)}\frac{f(tz)}{(1-t\langle z,\xi\rangle)^\beta}\,{\rm d}\mu(t),
    \quad z\in\mathbb B_n,
\] 
where $\mu $ is also a positive Borel measure on $[0,1)$.
It reduces to the operator on the disk when
$n=1$ and $\xi=1$.
The results below remove the restrictions $p,q\geq1$.  They cover the ranges
$0<p\leq q<\infty$ and $0<q<p\leq\infty$, together with the cases in which
the target space is $H^\infty(\mathbb B_n)$.

{\bf Notation}. In what follows, $\xi\in\partial\mathbb{B}_n$ is fixed and for $k\in \mathbb{N}$, denote by $r_k=1-2^{-k}$ and $\delta_k=2^{-k}$. Given a positive Borel measure $\mu$ on $[0,1)$, set $\hat{\mu}(r)=\mu([r,1))$ for $r\in [0,1)$.

 We also put $\delta_0=1$, $r_0=0$, and
\[
    I_k=[r_k,r_{k+1}),\qquad k\geq0,
\]
so that $I_0=[0,1/2)$.  As usual, $1/\infty=0$, and we write
$A_\alpha^\infty(\mathbb B_n)=H^\infty(\mathbb B_n)$ and
$\|\cdot\|_{\infty,\alpha}:=\|\cdot\|_\infty$.

 Now we are ready to introduce our main results. Our first result extends Theorem 2 in \cite{GSZ2025} to the ball setting  and to the range $0<p\leq q<\infty$.
\begin{theorem}\label{theorem1.1}
Let $\alpha>-1$, $\beta>0$ and   $0<p\leq q<\infty$. Suppose $\mu$ is a positive Borel measure on $[0,1)$. Then $\mathcal{C}_{\mu,\beta}^{\xi}: A_{\alpha}^p(\mathbb{B}_n)\to A_{\alpha}^q(\mathbb{B}_n)$ is bounded if and only if $\mu$ is a $\lambda$-Carleson measure on $[0,1)$, where \[
    \lambda=\beta+(n+1+\alpha)\Big(\frac{1}{p}-\frac{1}{q}\Big).
\] That is
\[
    \sup_{r\in [0,1)}\frac{\hat{\mu}(r)}{(1-r)^{\lambda}}<\infty.
\]
\end{theorem}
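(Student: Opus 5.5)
Necessity and sufficiency are handled separately. For necessity we use test functions concentrated near the boundary point $\xi$. Fix $a\in(0,1)$ and put
\[
f_a(z)=\frac{(1-a^2)^{s}}{(1-a\langle z,\xi\rangle)^{s+(n+1+\alpha)/p}},
\]
with $s>0$ large. Then $\|f_a\|_{p,\alpha}\asymp 1$ by the standard integral estimate for $\int_{\mathbb B_n}|1-\langle z,w\rangle|^{-c}\,dv_\alpha$.

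For $z=\rho\xi$ with $\rho\in(0,1)$ real, every quantity in $\mathcal C_{\mu,\beta}^{\xi}f_a(\rho\xi)$ is positive. We keep only $t\in[a,1)$ and $\rho\ge a$, where $|1-ta\rho|\asymp 1-a$ and $|1-t\rho|\lesssim 1-a$. This gives
\[
\mathcal C_{\mu,\beta}^{\xi}f_a(\rho\xi)\gtrsim \hat\mu(a)(1-a)^{-(n+1+\alpha)/p-\beta}.
\]
The positivity is only on the real slice, so we need a lower bound on a set of positive $v_\alpha$-measure. Near $\xi$, on a Bergman-metric ball $D(a\xi,r)$, we have $|1-t\langle z,\xi\rangle|\asymp 1-a$ and $|1-ta\langle z,\xi\rangle|\asymp1-a$ uniformly in $t\ge a$. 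However, the integrand is complex, so positivity can fail there.

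To avoid this, we instead bound $|\mathcal C f(a\xi)|$ pointwise by the $A^q_\alpha$ norm, using $|g(z)|\lesssim \|g\|_{q,\alpha}(1-|z|^2)^{-(n+1+\alpha)/q}$. Evaluated at $z=a\xi$, this gives
\[
\hat\mu(a)(1-a)^{-(n+1+\alpha)/p-\beta}\lesssim \|\mathcal C_{\mu,\beta}^{\xi}\|\,(1-a)^{-(n+1+\alpha)/q},
\]
that is, $\hat\mu(a)\lesssim(1-a)^{\lambda}$. This works for all $0<p\le q<\infty$.

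For sufficiency, the key is a pointwise bound. For $f\in A^p_\alpha$ we have $|f(tz)|\lesssim\|f\|_{p,\alpha}(1-t|z|)^{-(n+1+\alpha)/p}$. We split $[0,1)=\bigcup_k I_k$ and use $\hat\mu(r_k)\lesssim\delta_k^{\lambda}$. This yields
\[
|\mathcal C_{\mu,\beta}^{\xi}f(z)|\le \int_{[0,1)}\frac{|f(tz)|}{|1-t\langle z,\xi\rangle|^{\beta}}d\mu(t),
\]
and the two factors must be controlled jointly. For $1\le p\le q$ one can follow \cite{GSZ2025}: Minkowski's integral inequality gives $\|\mathcal C f\|_{q,\alpha}\le\int\|f_t\,(1-t\langle\cdot,\xi\rangle)^{-\beta}\|_{q,\alpha}d\mu(t)$. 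Using the Carleson embedding $A^p_\alpha\subset A^q_{\gamma}$ with $\gamma$ chosen appropriately, together with an estimate for $\int|f(tz)|^q|1-t\langle z,\xi\rangle|^{-\beta q}dv_\alpha$ (slice integration plus the growth bound), we obtain
\[
\|f_t(1-t\langle\cdot,\xi\rangle)^{-\beta}\|_{q,\alpha}\lesssim (1-t)^{-\lambda}\cdot(\text{something integrable against }\mu),
\]
which is slightly too crude. The refinement is to write $\int(1-t)^{-\lambda+\epsilon}\ldots d\mu(t)$ and integrate by parts via $\hat\mu$. Concretely, we bound by $\sum_k\delta_k^{-\lambda}\mu(I_k)$ weighted with extra decay, which fails without decay. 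The intended gain is this: rather than taking the sup bound on $f$, we compute $\|f_t\,(1-t\langle\cdot,\xi\rangle)^{-\beta}\|_{q,\alpha}^q$ exactly, using $|f_t|^q$ subharmonic and integral means. This gives the bound $\|f\|^q_{p,\alpha}\,\Phi(t)$, and the Carleson condition then controls $\int\Phi(t)^{1/q}d\mu$ via a layer-cake/dyadic argument.

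For $0<p<1$ Minkowski fails. There we use atomic decomposition, $f=\sum_j c_j (1-|a_j|^2)^{b-(n+1+\alpha)/p}(1-\langle z,a_j\rangle)^{-b}$ with $\sum|c_j|^p\lesssim\|f\|^p$. Since $q\ge p$, we have $\|\sum g_j\|_q^{p}\le\ldots$, which is fine if $q\le1$ (then $\|\sum g_j\|_q^q\le\sum\|g_j\|_q^q$ and $\ell^p\subset\ell^q$). If $q>1$, we use the triangle inequality and $\ell^p\subset\ell^1$. Either way, it suffices to prove $\|\mathcal C_{\mu,\beta}^{\xi}g_a\|_{q,\alpha}\lesssim1$ uniformly in the atoms $g_a$. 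That reduces to the explicit estimate
\[
\int_{\mathbb B_n}\Big(\int_{[0,1)}\frac{(1-|a|^2)^{b-(n+1+\alpha)/p}}{|1-t\langle z,a\rangle|^{b}|1-t\langle z,\xi\rangle|^{\beta}}d\mu(t)\Big)^q dv_\alpha(z)\lesssim1.
\]
We would prove this by splitting $\mu$ dyadically over $I_k$, using Forelli–Rudin estimates on each piece and summing with $\mu(I_k)\le\delta_k^\lambda$. This atom estimate is the main obstacle: the two singularities, at $\bar a$-direction and at $\xi$, interact. The plan is to bound the product using $|1-t\langle z,w\rangle|\ge \max(1-t,|1-\langle z,w\rangle|)/2$. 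Each dyadic piece is then a Forelli–Rudin integral in $z$ with a loss controlled by $\delta_k$, and $b$ is chosen large enough that the geometric series converges.
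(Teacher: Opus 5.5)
Your necessity argument is correct and matches the paper's (Proposition~\ref{proposition3.6}): a normalized peak function at $a\xi$, restriction to $t\in[a,1)$, and the pointwise estimate at $a\xi$. Your plan for $0<p<1$ also follows the paper: atomic decomposition, $q$-subadditivity or $\ell^p\subset\ell^1$, and a uniform atom bound via dyadic splitting of $\mu$. One caveat there: for $p=q<1$ you have $\lambda=\beta$, so $|1-t\langle z,\xi\rangle|^{-\beta}$ may not be replaced by $\delta_k^{-\beta}$. If it is, the bound for every piece with $\delta_k\le 1-|w|$ is $\simeq1$ and the sum diverges; convergence must come from the interaction of the two singularities, as in Lemma~\ref{lemma3.4}.

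The genuine gap is sufficiency for $1\le p\le q$, above all $1<p\le q$. There you give no argument, and the route you outline cannot be completed. A bound $\|f(t\,\cdot)(1-t\langle\cdot,\xi\rangle)^{-\beta}\|_{q,\alpha}\le\|f\|_{p,\alpha}\Phi(t)^{1/q}$ valid for all $f$ forces $\Phi(t)^{1/q}\gtrsim(1-t)^{-\lambda}$. To see this, test with the normalized peak function at $t\xi$ and evaluate at $t\xi$. But $\int_{[0,1)}(1-t)^{-\lambda}\,{\rm d}\mu(t)=\infty$ for the $\lambda$-Carleson measure ${\rm d}\mu=(1-t)^{\lambda-1}{\rm d}t$.

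Worse, for $p>1$ Minkowski's inequality in $t$ already fails for individual functions. Take $F=\sum_k c_kf_k$ with $c_k\ge0$ and $f_k$ as in Lemma~\ref{lemma4.5}, and the same $\mu$. Consider the region $\{\delta_{j+1}<|1-\langle z,\xi\rangle|\le\delta_j,\ |\arg(1-\langle z,\xi\rangle)|<\varepsilon\}$, which has $v_\alpha$-measure $\simeq\delta_j^{n+1+\alpha}$. On it every $f_k(tz)$ has positive real part when $\varepsilon$ is small. So for $t\in I_j$,
\[
\|F(t\,\cdot)(1-t\langle\cdot,\xi\rangle)^{-\beta}\|_{q,\alpha}\gtrsim c_j\delta_j^{-\lambda},
\qquad\text{hence}\qquad
\int_{[0,1)}\|F(t\,\cdot)(1-t\langle\cdot,\xi\rangle)^{-\beta}\|_{q,\alpha}\,{\rm d}\mu(t)\gtrsim\sum_j c_j .
\]
This is infinite for $\{c_j\}\in\ell^p\setminus\ell^1$, although $\|F\|_{p,\alpha}\lesssim\|\{c_j\}\|_{\ell^p}$. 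Pulling the $\mu$-integral outside the norm turns an $\ell^p$-sum over dyadic scales into an $\ell^1$-sum, and no layer-cake or dyadic refinement of $\Phi$ repairs this.

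The missing idea is to keep the $\mu$-integral inside the $L^q$-norm until a power weight is available to absorb it. The paper does this in three steps.
\begin{itemize}
\item It writes $\mathcal C_{\mu,\beta}^{\xi}$ as an integral operator via the reproducing formula (Lemma~\ref{lemmaKR}). It then dominates it by the positive operator $\mathcal B_{\mu,\beta,\delta}^{\xi}$ with kernel $\widetilde K_{\mu,\beta,\delta}^{\xi}$.
\item It applies the generalized Schur test (Lemma~\ref{lemma2.3}, the tool of \cite{GSZ2025}) with $h_s(z)=(1-|z|^2)^{-s}$, $0<s<\min\{(\alpha+1)/q,(\alpha+1)/p'\}$. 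Minkowski is used only inside the two Schur integrals, after raising the kernel to the power $\gamma_1p'=\gamma_2q\ge1$.
\item The resulting integrals, with singularities at $w$ and at $\xi$, are evaluated by Lemma~\ref{lemma2.2}, with an $\varepsilon$-perturbation in the borderline case. The remaining $t$-integrals are controlled by Lemma~\ref{lemma2.6}, e.g.\ $\int_{[0,1)}(1-t|w|)^{-\lambda-s/\gamma_2}\,{\rm d}\mu(t)\lesssim(1-|w|^2)^{-s/\gamma_2}$.
\end{itemize}
The test function $h_s$ supplies exactly the decay your $t$-by-$t$ estimate lacks.

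For $p=1$ a Minkowski argument does work, but only after inserting the reproducing formula with $\delta>0$. The weight $(1-|w|^2)^{\delta}$ then absorbs $\int_{[0,1)}(1-t|w|)^{-\lambda-\delta}\,{\rm d}\mu(t)\lesssim(1-|w|^2)^{-\delta}$ (Proposition~\ref{proposition3.2}).
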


Our second result concerns the   range $0<q<p\leq\infty$ on the unit ball; the
case $1\leq q<p<\infty$  is completely new even in one
dimensional case.

\begin{theorem}\label{theorem1.2}
Let $\alpha>-1$, $\beta>0$ and   $0<q<p\leq\infty$. Suppose $\mu$ is a positive Borel measure on $[0,1)$.
Then $\mathcal{C}_{\mu,\beta}^{\xi}: A_{\alpha}^p(\mathbb{B}_n)\to A_{\alpha}^q(\mathbb{B}_n)$ is bounded if and only if $\mu$ is finite and 
\begin{equation}\label{equa1.1}
    \int_{0}^1\left(\frac{\hat{\mu}(r)}{(1-r)^{\lambda}}\right)^{\theta}\frac{{\rm d}r}{1-r}<\infty.
\end{equation}
Here
  \[
    \frac{1}{\theta}=\frac{1}q-\frac{1}p
    \quad \text{and}\quad
    \lambda=\beta+(n+1+\alpha)\Big(\frac{1}{p}-\frac{1}{q}\Big)
    =\beta-\frac{n+1+\alpha}{\theta}.
\]
For finite $\mu$, condition~\eqref{equa1.1} is equivalent to
\[
    \left\{\frac{\hat\mu(r_k)}{\delta_k^\lambda}\right\}_{k\geq1}
    \in\ell^\theta.
\]
 
\end{theorem}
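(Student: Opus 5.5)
We split the proof of Theorem~\ref{theorem1.2} into a dyadic reduction, a sufficiency argument, and a necessity argument. The plan is to write
\[
\mathcal C_{\mu,\beta}^{\xi}f=\sum_{k\ge0}T_kf,\qquad T_kf(z)=\int_{I_k}\frac{f(tz)}{(1-t\langle z,\xi\rangle)^\beta}\,{\rm d}\mu(t),
\]
and to estimate each block separately. First we dispose of the equivalence with the sequence condition, which is elementary. Since $\hat\mu$ is decreasing and $1-r\asymp\delta_k$ on $I_k$, we have $\int_{I_k}(\hat\mu(r)/(1-r)^\lambda)^\theta\,{\rm d}r/(1-r)$ comparable from above to $(\hat\mu(r_k)/\delta_k^\lambda)^\theta$ and from below to $(\hat\mu(r_{k+1})/\delta_k^\lambda)^\theta$. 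An index shift then gives the equivalence, with finiteness of $\mu$ handling $k=0$.

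\textbf{Sufficiency.} Let $f\in A^p_\alpha$ and $t\in I_k$. We use the pointwise estimate $|f(tz)|\lesssim\|f\|_{p,\alpha}\delta_k^{-(n+1+\alpha)/p}$ only where needed. A sharper approach is to write
\[
|T_kf(z)|\le\mu(I_k)\sup_{t\in I_k}\frac{|f(tz)|}{|1-t\langle z,\xi\rangle|^\beta},
\]
and to bound the $A^q_\alpha$ norm of $\sup_{t\in I_k}|f(tz)|$ by a local maximal-function quantity $M_kf$. The aim is the block bound
\[
\|T_kf\|_{q,\alpha}\lesssim \mu(I_k)\,\delta_k^{-\beta}\,\delta_k^{(n+1+\alpha)(1/q-1/p)}\,a_k(f),
\]
where $a_k(f)$ is an $L^p$-mass of $f$ over the region $\{|z|\approx r_k\}$, suitably smoothed via subharmonicity. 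These masses should satisfy $\sum_k a_k(f)^p\lesssim\|f\|_{p,\alpha}^p$.

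One caveat is that the kernel factor $|1-t\langle z,\xi\rangle|^{-\beta}$ is not simply $\delta_k^{-\beta}$. Its $L^q_\alpha$ integral in $z$ behaves like $\delta_k^{-\beta+(n+1+\alpha)/q}$ when $\beta q>n+1+\alpha$, and it is the tension between this factor and the local mass that produces $\lambda$. Given the block bound, we sum over $k$. For $q\le1$ we use the $q$-triangle inequality; for $q>1$ we need a near-orthogonality or Littlewood--Paley argument, or else we first pass to $\hat\mu$ via Abel summation, $\mu(I_k)\le\hat\mu(r_k)$. Hölder with exponents $p/q$ and $\theta/q$ then gives
\[
\sum_k\Big(\frac{\hat\mu(r_k)}{\delta_k^\lambda}\Big)^q a_k^q\le\big\|\{\hat\mu(r_k)\delta_k^{-\lambda}\}\big\|_{\ell^\theta}^q\,\|a\|_{\ell^p}^q.
\]
For $p=\infty$ we take $a_k=\|f\|_\infty$ and $\theta=q$.

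\textbf{Necessity.} Finiteness of $\mu$ follows by testing $f\equiv1$ at $z=0$. For the $\ell^\theta$ condition we use duality through test functions $f=\sum_k c_k f_k$, where $f_k(z)=\delta_k^{s-(n+1+\alpha)/p}(1-r_k\langle z,\xi\rangle)^{-s}$ with $s$ large. These $f_k$ are atoms concentrated near $r_k\xi$, and we use Rademacher randomization and Khinchine's inequality to control $\|f\|_{p,\alpha}\lesssim\|c\|_{\ell^p}$; this is the atomic decomposition estimate. Since all the $f_k$ have positive real part near the ray through $\xi$, there is no cancellation. Integrating $|\mathcal C f|^q$ over the Carleson-type regions $D_j=\{z:|1-\langle z,\xi\rangle|\approx\delta_j\}$ and keeping only $t\ge r_j$ should give
\[
\|\mathcal C f\|_{q,\alpha}^q\gtrsim\sum_j\Big(\hat\mu(r_j)\,\delta_j^{-\beta}\,|c_j|\,\delta_j^{-(n+1+\alpha)/p}\Big)^q\delta_j^{n+1+\alpha}.
\]
This quantity is $\sum_j(\hat\mu(r_j)\delta_j^{-\lambda}|c_j|)^q$. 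Boundedness then yields $\sum_j b_j^q|c_j|^q\lesssim\|c\|_{\ell^p}^q$ with $b_j=\hat\mu(r_j)\delta_j^{-\lambda}$, so $\{b_j^q\}$ multiplies $\ell^{p/q}$ into $\ell^1$, and by duality $b\in\ell^\theta$. To make the argument rigorous we first truncate $\mu$ (for instance to $[0,r_N)$) so that all quantities are finite, and then let $N\to\infty$. Positivity of the integrand for $t\ge r_j$, $z\in D_j$, needs to be checked via argument control of $(1-t\langle z,\xi\rangle)^{-\beta}(1-r_k t\langle z,\xi\rangle)^{-s}$ in a thin Stolz-type region.

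\textbf{Main obstacle.} The hardest step is the sufficiency summation for $q>1$, because the blocks $T_kf$ are not localized in $z$: the kernel spreads mass over the whole ball. I would try first to bound $|\mathcal C f(z)|$ pointwise by $\int\hat\mu(t)\,\partial_t[\cdots]$ after integrating by parts, which reduces the problem to a positive Hardy-type operator. I would then apply the weighted Hardy inequality for $q<p$ (Sinnamon-type characterization), whose condition is exactly \eqref{equa1.1}.
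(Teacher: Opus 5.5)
\textbf{Sufficiency has a genuine gap.} Splitting $\mu$ dyadically in $t$ and summing the norms of the blocks $T_kf$ loses too much, because each $T_kf$ spreads over the whole ball.

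The block bound you aim for is not proved, and it is false when $\beta q<n+1+\alpha$. Take $p=\infty$ and $f\equiv1$. On the set $\{|\langle z,\xi\rangle|<\epsilon\}$ with $\beta\arcsin\epsilon<\pi/2$ one has ${\rm Re}(1-t\langle z,\xi\rangle)^{-\beta}\gtrsim1$, so $\|T_k1\|_{q,\alpha}\gtrsim\mu(I_k)$. Your bound would instead give $\mu(I_k)\delta_k^{-\lambda}$ with $-\lambda>0$.

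Summing block norms also fails regardless of which block bound you use. Let $\mu$ put mass $k^{-2}$ at each $r_k$, with $q\le 1/2$ and $\beta q<n+1+\alpha$. Then $\sum_k\|T_k1\|_{q,\alpha}^q\gtrsim\sum_k k^{-2q}=\infty$. Yet $\mathcal C_{\mu,\beta}^{\xi}:H^\infty(\mathbb B_n)\to A^q_\alpha(\mathbb B_n)$ is bounded. Indeed $|1-t\langle z,\xi\rangle|\ge\frac12|1-\langle z,\xi\rangle|$ gives $|\mathcal C_{\mu,\beta}^{\xi}f(z)|\le 2^\beta\mu([0,1))\|f\|_\infty|1-\langle z,\xi\rangle|^{-\beta}$, which lies in $L^q_\alpha$ since $\beta q<n+1+\alpha$.

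For $q>1$ the summation is left open. There $\sum_k\|T_kf\|_{q,\alpha}^q$ no longer dominates $\|\sum_kT_kf\|_{q,\alpha}^q$. The triangle inequality plus H\"older only gives sufficiency under $\{\mu(I_k)\delta_k^{-\lambda}\}\in\ell^{p'}$. That is strictly stronger than $\ell^\theta$, because $1/\theta=1/q-1/p<1/p'$. The integration-by-parts/Sinnamon idea is not developed far enough to separate $f$ from $\mu$, so sufficiency is not established.

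\textbf{The missing idea is to decompose in $z$ rather than in $t$.} This is what the paper does.
\begin{itemize}
\item[(1)] Positivity of $\mu$ gives $|\mathcal C_{\mu,\beta}^{\xi}f(z)|\le\mathcal Rf(z)\,V_\mu^\xi(z)$. Here $\mathcal Rf(z)=\sup_{0\le s\le1}|f(sz)|$ and $V_\mu^\xi(z)=\int_{[0,1)}|1-t\langle z,\xi\rangle|^{-\beta}{\rm d}\mu(t)$.
\item[(2)] Apply H\"older on $\mathbb B_n$ with exponents $p/q$ and $\theta/q$. This is your numerology, applied to functions rather than sequences. It yields $\|\mathcal C_{\mu,\beta}^{\xi}f\|_{q,\alpha}\le\|\mathcal Rf\|_{p,\alpha}\,\|V_\mu^\xi\|_{L^\theta_\alpha}$.
\item[(3)] The first factor is controlled by the radial maximal theorem (Lemma~\ref{lemma2.5}), or by $\mathcal Rf\le\|f\|_\infty$ when $p=\infty$.
\item[(4)] The second factor is finite if and only if \eqref{equa1.1} holds. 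Slice integration reduces it to the disk with weight $(1-|u|^2)^{n+\alpha-1}$. The disk is split into the disjoint regions $E_m=\{\delta_{m+1}<|1-u|\le\delta_m\}$, each of weighted measure $\simeq\delta_m^{n+1+\alpha}$. On $E_m$ one has $V_\mu\lesssim\sum_{j<m}\mu(I_j)\delta_j^{-\beta}+\hat\mu(r_m)\delta_m^{-\beta}$.
\item[(5)] A discrete Young (Hardy-type) inequality then sums over $m$. This is where your Hardy intuition belongs. Because the $E_m$ are disjoint, no orthogonality is needed for any $q$.
\end{itemize}

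\textbf{Your necessity argument is a valid alternative to the paper's.} You use positivity on thin Stolz sectors, with $c_k\ge0$ and $(s+\beta)\varepsilon<\pi/2$. The paper instead applies Khinchine's inequality to the output and localizes to disjoint Bergman balls around $r_j\xi$. Two small corrections:
\begin{itemize}
\item[(a)] Khinchine plays no role in $\|\sum_kc_kf_k\|_{p,\alpha}\lesssim\|c\|_{\ell^p}$. That follows from the Bergman-metric separation of $\{r_k\xi\}$, or from a direct computation as in Lemma~\ref{lemma4.5}.
\item[(b)] Truncating $\mu$ is unnecessary, since $\hat\mu(r_j)\le\mu([0,1))<\infty$.
\end{itemize}
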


We also characterize   boundedness into  $H^{\infty}(\mathbb{B}_n)$.

\begin{theorem}\label{theorem1.3}
Let $\alpha>-1$, $\beta>0$ and $\mu$ be a positive Borel measure on $[0,1)$. 
Then the following assertions hold.
\begin{itemize}
\item[(i)]   For $0<p\leq1$, the operator $\mathcal{C}_{\mu,\beta}^{\xi}: A_{\alpha}^p(\mathbb{B}_n)\to H^{\infty}(\mathbb{B}_n)$  is bounded if and only if
  \[
    \sup_{0\leq r<1}
    \frac{\hat\mu(r)}{(1-r)^{\beta+(n+1+\alpha)/p}}<\infty.
\] 
\item[(ii)] For   $1<p<\infty$, let $\frac{1}{p}+\frac{1}{p'}=1$.  Then $\mathcal{C}_{\mu,\beta}^{\xi}: A_{\alpha}^p(\mathbb{B}_n)\to H^{\infty}(\mathbb{B}_n)$ is bounded if and only if $\mu$ is finite and
\[
    \left\{\frac{\hat{\mu}(r_k)}{\delta_k^{\,\beta+(n+1+\alpha)/p}}\right\}_{k\geq 1}\in \ell^{p'}.
\]
\item[(iii)]   The operator $\mathcal{C}_{\mu,\beta}^{\xi}: H^{\infty}(\mathbb{B}_n)\to H^{\infty}(\mathbb{B}_n)$  is bounded if and only if
  \[
    \int_{[0,1)}\frac{{\rm d}\mu(t)}{(1-t)^{\beta}}<\infty.
\] 
\end{itemize}
\end{theorem}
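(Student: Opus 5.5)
The plan is to treat the three parts separately. In each, the sufficiency direction comes from a pointwise bound on the integrand, and the necessity direction from testing on suitable functions pushed toward the boundary point $\xi$.

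\emph{Part (i).} For $f\in A^p_\alpha(\mathbb B_n)$ we use the standard pointwise estimate $|f(w)|\lesssim \|f\|_{p,\alpha}(1-|w|^2)^{-(n+1+\alpha)/p}$. Since $|1-t\langle z,\xi\rangle|\ge 1-t$, we get
\[
|\mathcal C^\xi_{\mu,\beta}f(z)|\lesssim \|f\|_{p,\alpha}\int_{[0,1)}\frac{{\rm d}\mu(t)}{(1-t)^{\beta+(n+1+\alpha)/p}} .
\]
This bound is valid for every $p$. For $p\le1$, however, the hypothesis is only $\hat\mu(r)\lesssim (1-r)^{\gamma}$ with $\gamma=\beta+(n+1+\alpha)/p$, and that integral may diverge. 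So this direct bound is not enough, and the sufficiency argument must use $p\le1$.

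For sufficiency I would decompose $[0,1)=\bigcup_k I_k$ and use the refined estimate on the slice $t\mapsto f(tz)$. The function $g(\zeta)=f(\zeta z/|z|)$, restricted to the disc, lies in a weighted Bergman space $A^{p}_{\alpha+n-1}(\mathbb D)$-type space with norm $\lesssim\|f\|_{p,\alpha}$ (slice integration). Then I would bound
\[
\sum_k \mu(I_k)\,\delta_k^{-\beta}\sup_{t\in I_k}|f(tz)| .
\]
For $p\le1$ the key fact is $\sum_k \delta_k^{(n+1+\alpha)/p}\sup_{I_k}|f|\lesssim\|f\|$, or alternatively an $\ell^p\subset\ell^1$ argument: since $p\le1$,
\[
\sum_k a_k\le\Big(\sum_k a_k^p\Big)^{1/p},
\]
and $\sum_k \delta_k^{n+1+\alpha}\sup_{I_k\cdot\zeta}|f|^p$ is controlled by the $A^p_\alpha$ norm via subharmonicity on Bergman balls along the radius. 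Combining this with $\mu(I_k)\le\hat\mu(r_k)\lesssim\delta_k^{\gamma}$ gives the bound uniformly in $z$.

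For necessity I would test with
\[
f_a(w)=\frac{(1-|a|^2)^{b}}{(1-\langle w,a\rangle)^{b+(n+1+\alpha)/p}},\qquad a=r\xi,
\]
and evaluate at $z=s\xi$ with $s\to1$, after choosing $b$ so that everything is positive real. This yields
\[
\int \frac{{\rm d}\mu(t)}{(1-ts)^\beta}\,\frac{(1-r^2)^b}{(1-tsr)^{b+(n+1+\alpha)/p}}\lesssim \|\mathcal C^\xi_{\mu,\beta}\|.
\]
Restricting to $t\ge r$ and letting $s\to1$ gives $\hat\mu(r)\lesssim(1-r)^{\gamma}$.

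\emph{Part (ii).} Here I would use duality. The point evaluation $f\mapsto f(tz)$ pairs with Bergman kernels, so
\[
\sup_{\|f\|\le1}|\mathcal C f(z)|=\big\|\textstyle\int K_{tz}(1-t\langle z,\xi\rangle)^{-\beta}\,{\rm d}\mu(t)\big\|_{(A^p_\alpha)^*}.
\]
With $(A^p_\alpha)^*\cong A^{p'}_\alpha$, the question reduces to computing the $A^{p'}_\alpha$ norm of $G_z(w)=\int (1-t\langle z,\xi\rangle)^{-\beta}(1-t\langle w,z\rangle)^{-(n+1+\alpha)}\,{\rm d}\mu(t)$, whose supremum is attained as $z\to\xi$ radially. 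Discretizing $\mu$ over the $I_k$ and using the Forelli--Rudin estimates, the norm of $\sum_k \mu(I_k)\delta_k^{-\beta}K_{r_k\xi}$ is comparable to $\big(\sum_k(\mu(I_k)\delta_k^{-\beta-(n+1+\alpha)/p})^{p'}\big)^{1/p'}$. Since all coefficients are positive, I would take a lower bound by restricting to Carleson boxes near $r_k\xi$ and use almost-disjointness. Finally, a Hardy-type inequality converts between $\mu(I_k)$ and $\hat\mu(r_k)$; the geometric weights make $\ell^{p'}$ norms of the two sequences equivalent, and finiteness of $\mu$ handles $k=0$. I expect this two-sided norm estimate of a positive sum of kernels in $A^{p'}_\alpha$ to be the main obstacle. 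The upper bound would come from the triangle inequality applied dyadically (Minkowski/Hölder with geometric decay), and the lower bound from pointwise lower bounds $|G|\gtrsim \mu(I_k)\delta_k^{-\beta-(n+1+\alpha)}$ on a region of measure $\approx\delta_k^{n+1+\alpha}$ around $r_k\xi$.

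\emph{Part (iii).} Since $|1-t\langle z,\xi\rangle|\ge1-t$, we have $|\mathcal C f(z)|\le\|f\|_\infty\int(1-t)^{-\beta}\,{\rm d}\mu$, which gives sufficiency. For necessity, apply the operator to $f\equiv1$ and evaluate at $z=s\xi$. This gives $\int(1-ts)^{-\beta}\,{\rm d}\mu(t)\le C$, and monotone convergence as $s\to1$ yields the condition.
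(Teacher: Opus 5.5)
Parts (iii) and the necessity half of (i) match the paper's argument. In (i) you may simply evaluate at $z=r\xi$; letting $s\to1$ is not needed. The rest takes genuinely different routes.

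\emph{Sufficiency in (i).} The paper uses the atomic decomposition (Proposition~\ref{proposition3.5} via Lemma~\ref{lemma3.3} with $q=\infty$) when $p<1$. When $p=1$ it uses the kernel operator $\mathcal B^{\xi}_{\mu,\beta,\delta}$ together with Lemma~\ref{lemma2.6}. Your argument runs as follows:
\begin{itemize}
\item[(a)] $\mu(I_k)\le\hat\mu(r_k)\lesssim\delta_k^{\beta+(n+1+\alpha)/p}$;
\item[(b)] $\sum_k a_k\le(\sum_k a_k^p)^{1/p}$;
\item[(c)] the radial estimate $\sum_k\delta_k^{n+1+\alpha}\sup_{t\in I_k}|f(tz)|^p\lesssim\|f\|_{p,\alpha}^p$, which is exactly Lemma~\ref{lemma5.3}.
\end{itemize}
This treats all $0<p\le1$ at once without atoms. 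It is the same mechanism the paper uses for sufficiency in (ii), with $\ell^p\subset\ell^1$ replacing H\"older, and it makes the $\ell^\infty$/$\ell^{p'}$ picture of the paper's final remark transparent. You can drop the detour through a single slice. A uniform bound on one slice is a restriction theorem, not a consequence of slice integration, which only controls the average over directions. Lemma~\ref{lemma5.3} works directly in the ball.

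\emph{Part (ii).} The paper proves sufficiency by that same radial estimate plus H\"older. It proves necessity by testing $F_{\mathbf c,p}=\sum_kc_kf_k$ (Lemma~\ref{lemma4.5}) at the real points $r\xi$, where everything is positive, then using $\ell^p$--$\ell^{p'}$ duality and Fatou. You instead identify $\sup_{\|f\|_{p,\alpha}\le1}|\mathcal C^{\xi}_{\mu,\beta}f(z)|$ with $\|G_z\|_{p',\alpha}$ via $(A^p_\alpha)^*\cong A^{p'}_\alpha$. This gives a norm formula and explains the exponent $p'$. The cost is the Bergman duality and a two-sided kernel estimate.

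That two-sided estimate is where your sketch, as written, does not yet work.
\begin{itemize}
\item[(1)] \emph{Upper bound.} The triangle inequality over $k$ alone only gives the $\ell^1$ bound $\|G_z\|_{p',\alpha}\lesssim\sum_k\mu(I_k)\delta_k^{-\beta-(n+1+\alpha)/p}$. To reach $\ell^{p'}$, use $|1-t\langle z,\xi\rangle|\ge1-t$ and $|1-t\langle w,z\rangle|\gtrsim\delta_k+|1-\langle w,z\rangle|$ for $t\in I_k$. By unitary invariance this also gives uniformity over non-radial $z$. Then split the $w$-domain into the regions $E_m$ and apply Young's inequality with the summable kernel $2^{-|k-m|(n+1+\alpha)\min\{1/p,1/p'\}}$, exactly as in Lemma~\ref{lemma4.5}.
\item[(2)] \emph{Lower bound.} Positivity of the coefficients is not enough, because off the radius $G_{s\xi}$ is complex-valued. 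On Carleson boxes (which are nested, not almost disjoint) or on their top halves, the factors $(1-ts\langle w,\xi\rangle)^{-(n+1+\alpha)}$ can have argument beyond $\pi/2$, so cancellation can occur. Instead, apply the sub-mean-value property on the disjoint Bergman balls $B(r_k\xi,\tau_0)$ of Lemma~\ref{lemma4.6}. This gives $\|G_{s\xi}\|_{p',\alpha}^{p'}\gtrsim\sum_{r_k\le s}\delta_k^{n+1+\alpha}G_{s\xi}(r_k\xi)^{p'}$. Every term of $G_{s\xi}(r_k\xi)$ is positive, and $G_{s\xi}(r_k\xi)\gtrsim\hat\mu(r_k)\delta_k^{-\beta-(n+1+\alpha)}$. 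Letting $s\to1$ yields the $\hat\mu(r_k)$ condition directly, with no Hardy-type conversion needed.
\item[(3)] \emph{Finiteness of $\mu$.} First derive $\mu([0,1))=\mathcal C^{\xi}_{\mu,\beta}1(0)<\infty$. Then each $G_z$ is bounded and really does represent the functional $f\mapsto\mathcal C^{\xi}_{\mu,\beta}f(z)$.
\end{itemize}
With these repairs, your duality route is a valid alternative to the paper's argument.
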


This paper is organized as follows.
In Section 2, we recall several Forelli--Rudin type estimates, the generalized Schur's test,  atomic decomposition, a radial maximal estimate,  and other auxiliary results needed in the sequel.
Section 3 is devoted to the proof of Theorem~\ref{theorem1.1}, where the sufficiency and necessity are established separately.
In Section 4, we prove Theorem~\ref{theorem1.2} by deriving both integral and discrete characterizations involving the tail function $\hat{\mu}$.
Finally, in Section 5, we study the boundedness of $\mathcal{C}_{\mu,\beta}^{\xi}$ between weighted Bergman spaces and $H^{\infty}(\mathbb{B}_n)$, thereby proving Theorem~\ref{theorem1.3}.

Throughout the paper, $C>0$ denotes a constant that may change from line
to line but is independent of the relevant variables. For non-negative quantities $A$ and $B$, we write $A\lesssim B$ (or equivalently $B\gtrsim A$) if there exists an absolute constant $C>0$ such that $A\leq CB$, and $A\simeq B$ means that both $A\lesssim B$ and $B\lesssim A$. 

\section{Preliminaries}

In this section, we recall some basic facts and present some auxiliary lemmas that will be used in the sequel.

The following two lemmas are the Forelli-Rudin type estimates. For related
$L^p$--$L^q$ mapping results for Forelli--Rudin type operators on the unit
ball, see \cite{ZhaoZhou2022}.

\begin{lemma}[Theorem 1.12 in \cite{Zhu2005}]\label{lemma2.1}
Suppose $\delta>-1$ and $c$ is real. Let
\[
    J_{c,\delta}(z)=\int_{\mathbb{B}_n}\frac{(1-|w|^2)^{\delta}}{|1-\langle z,w\rangle|^{n+1+\delta+c}}{\rm d}v(w),\quad z\in\mathbb{B}_n.
\]
Then the following asymptotic properties hold.
\begin{itemize}
\item[(1)] If $c<0$, then $J_{c,\delta}$ is bounded in $\mathbb{B}_n$.
\item[(2)] If $c=0$, then
\[
    J_{0,\delta}(z)\simeq \log\frac{1}{1-|z|^2},\quad |z|\to 1^-.
\]
\item[(3)] If $c>0$, then
\[
    J_{c,\delta}(z)\simeq \frac{1}{(1-|z|^2)^c},\quad |z|\to 1^{-}.
\]
\end{itemize}
\end{lemma}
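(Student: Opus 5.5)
The plan is to use the standard proof for the Forelli--Rudin integrals. First, by rotation invariance of ${\rm d}v$ and of the weight $(1-|w|^2)^\delta$, we may assume $z=|z|e_1$. Next, expand the kernel in a power series. Write $s=(n+1+\delta+c)/2$, so that $|1-\langle z,w\rangle|^{-2s}=|(1-\langle z,w\rangle)^{-s}|^2$, and
\[
(1-\langle z,w\rangle)^{-s}=\sum_{k\ge0}\frac{\Gamma(k+s)}{k!\,\Gamma(s)}\langle z,w\rangle^k .
\]
The monomials $\langle z,w\rangle^k$ for different $k$ are orthogonal in $L^2((1-|w|^2)^\delta{\rm d}v)$, since they are homogeneous of different degrees and we can integrate in polar coordinates. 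Therefore
\[
J_{c,\delta}(z)=\sum_{k\ge0}\Big(\frac{\Gamma(k+s)}{k!\,\Gamma(s)}\Big)^2\int_{\mathbb B_n}|\langle z,w\rangle|^{2k}(1-|w|^2)^\delta\,{\rm d}v(w).
\]

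To compute the inner integral, use polar coordinates ${\rm d}v=2n\rho^{2n-1}{\rm d}\rho\,{\rm d}\sigma$. Together with the known identity
\[
\int_{\partial\mathbb B_n}|\langle z,\zeta\rangle|^{2k}\,{\rm d}\sigma(\zeta)=\frac{(n-1)!\,k!}{(n-1+k)!}|z|^{2k},
\]
this reduces the integral to a Beta function. The result is
\[
\frac{(n-1)!\,k!}{(n-1+k)!}\,|z|^{2k}\cdot n\,B(n+k,\delta+1).
\]
Collecting the Gamma factors gives
\[
J_{c,\delta}(z)=\frac{\Gamma(n+1)\Gamma(\delta+1)}{\Gamma(s)^2}\sum_{k\ge0}\frac{\Gamma(k+s)^2}{k!\,\Gamma(k+n+1+\delta)}|z|^{2k}.
\]

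The key step is Stirling's formula: $\Gamma(k+a)/\Gamma(k+b)\simeq k^{a-b}$. Applying it, the coefficients behave like
\[
\frac{\Gamma(k+s)^2}{k!\,\Gamma(k+n+1+\delta)}\simeq k^{2s-1-(n+1+\delta)}=k^{c-1}.
\]
The three cases then follow by comparison with elementary series in $x=|z|^2$:
\begin{itemize}
\item[(1)] If $c<0$, then $\sum_k k^{c-1}$ converges, so $J_{c,\delta}$ is bounded on $\mathbb B_n$.
\item[(2)] If $c=0$, then $\sum_k k^{-1}x^k=\log\frac{1}{1-x}$, which gives the logarithmic behavior.
\item[(3)] If $c>0$, then $\sum_k k^{c-1}x^k\simeq\Gamma(c)(1-x)^{-c}$, since the coefficients of $(1-x)^{-c}$ are $\Gamma(k+c)/(k!\,\Gamma(c))\simeq k^{c-1}$.
\end{itemize}
Because the asymptotics hold up to constants for all large $k$, we obtain two-sided comparability as $|z|\to1^-$.

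The main obstacle is the bookkeeping in the sphere and Beta integrals, that is, getting the exponent of $k$ to come out exactly as $c-1$. The series comparison itself is routine once that exponent is correct; the finitely many small $k$ only contribute a bounded term, which is harmless in cases (2) and (3).
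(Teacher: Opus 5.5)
Your argument is correct and is exactly the standard proof of Theorem 1.12 in Zhu's book, which the paper cites without reproducing. That proof expands $(1-\langle z,w\rangle)^{-s}$ and uses orthogonality of the homogeneous terms together with the sphere and Beta integrals to obtain $J_{c,\delta}(z)=\frac{\Gamma(n+1)\Gamma(\delta+1)}{\Gamma(s)^2}\sum_k\frac{\Gamma(k+s)^2}{k!\,\Gamma(k+n+1+\delta)}|z|^{2k}$, then reads off the three cases from Stirling's formula. One small detail to add: when $n+1+\delta+c\le 0$, you have $s\le 0$ and $\Gamma(s)$ may have a pole, so handle case (1) directly there; the integrand is then at most $2^{-2s}(1-|w|^2)^{\delta}$, which is integrable.
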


\begin{lemma}[Theorem 3.1 in \cite{ZhangLiShangGuo2018}]\label{lemma2.2}
Suppose $b\geq 0$, $c\geq 0$ and $\delta>-1$. Let 
\[
    J_{b,c,\delta}(z,w)=\int_{\mathbb{B}_n}\frac{(1-|u|^2)^{\delta}}{|1-\langle z,u\rangle|^{b}|1-\langle w,u\rangle|^c}{\rm d}v(u),\quad z,w\in\mathbb{B}_n.
\]
Then the following asymptotic properties hold.
\begin{itemize}
\item[(1)] If $b+c<n+1+\delta$, then $J_{b,c,\delta}$ is bounded.
\item[(2)] If $b+c>n+1+\delta$, $b<n+1+\delta$ and $c<n+1+\delta$, then 
\[
    J_{b,c,\delta}(z,w)\simeq \frac{1}{|1-\langle z,w\rangle|^{b+c-n-1-\delta}}.
\]
\item[(3)] If $b>n+1+\delta>c$, then 
\[
    J_{b,c,\delta}(z,w)\simeq \frac{1}{(1-|z|^2)^{b-n-1-\delta}|1-\langle z,w\rangle|^c}.
\]
\item[(4)] If $b>n+1+\delta$ and $c>n+1+\delta$, then
\begin{align*}
J_{b,c,\delta}(z,w)\simeq \frac{1}{(1-|z|^2)^{b-n-1-\delta}|1-\langle z,w\rangle|^c}+\frac{1}{(1-|w|^2)^{c-n-1-\delta}|1-\langle z,w\rangle|^{b}}.
\end{align*}
\end{itemize}
\end{lemma}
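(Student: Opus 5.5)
The plan is to prove the upper bounds by splitting $\mathbb{B}_n$ according to which kernel is large, using two standard facts. First, the quasi-triangle inequality
\[
|1-\langle z,w\rangle|^{1/2}\le |1-\langle z,u\rangle|^{1/2}+|1-\langle u,w\rangle|^{1/2},\qquad z,w,u\in\overline{\mathbb B}_n .
\]
Second, the layer-cake estimate
\[
\int_{\{|1-\langle z,u\rangle|<s\}}(1-|u|^2)^\delta\,{\rm d}v(u)\lesssim s^{n+1+\delta},
\]
uniformly in $z$. The lower bounds will come from restricting the integral to a suitable Bergman metric ball, on which all the factors are essentially constant.

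\emph{Case (1).} Apply H\"older's inequality with exponents $(b+c)/b$ and $(b+c)/c$ to the measure $(1-|u|^2)^\delta{\rm d}v(u)$. This gives
\[
J_{b,c,\delta}(z,w)\le J_{c_0,\delta}(z)^{b/(b+c)}\,J_{c_0,\delta}(w)^{c/(b+c)},\qquad c_0=b+c-n-1-\delta<0 .
\]
Both factors are bounded by Lemma~\ref{lemma2.1}(1), so $J_{b,c,\delta}$ is bounded.

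\emph{Upper bounds in (2)--(4).} Put $D=|1-\langle z,w\rangle|$. By the quasi-triangle inequality there is a constant $C$ such that every $u$ satisfies $|1-\langle z,u\rangle|\ge D/C$ or $|1-\langle w,u\rangle|\ge D/C$. I split $\mathbb B_n$ into three regions.
\begin{itemize}
\item[(a)] On $\{|1-\langle z,u\rangle|<D/C\}$ we have $|1-\langle w,u\rangle|\simeq D$.
\item[(b)] Symmetrically, on $\{|1-\langle w,u\rangle|<D/C\}$ we have $|1-\langle z,u\rangle|\simeq D$.
\item[(c)] On the remaining set both kernels are $\gtrsim D$, and by the quasi-triangle inequality $|1-\langle z,u\rangle|\simeq|1-\langle w,u\rangle|$ there.
\end{itemize}
On region (c) the integrand is therefore $\lesssim |1-\langle z,u\rangle|^{-(b+c)}$ restricted to $|1-\langle z,u\rangle|\gtrsim D$. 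A dyadic layer-cake sum, convergent because $b+c>n+1+\delta$, gives $\lesssim D^{n+1+\delta-b-c}$.

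On region (a) the contribution is at most $D^{-c}\int_{\{|1-\langle z,u\rangle|<D/C\}}(1-|u|^2)^\delta|1-\langle z,u\rangle|^{-b}\,{\rm d}v(u)$, and I estimate this integral according to the size of $b$.
\begin{itemize}
\item If $b<n+1+\delta$, the layer-cake bound gives $\lesssim D^{n+1+\delta-b}$.
\item If $b>n+1+\delta$, I drop the restriction and use Lemma~\ref{lemma2.1}(3), which gives $(1-|z|^2)^{-(b-n-1-\delta)}$.
\end{itemize}
Region (b) is handled symmetrically in $c$. Since $1-|z|^2\lesssim D$, in case (3) the term $D^{n+1+\delta-b-c}$ is dominated by $(1-|z|^2)^{-(b-n-1-\delta)}D^{-c}$. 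Collecting terms yields the stated upper bounds in all of (2), (3) and (4).

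\emph{Lower bounds in (3) and (4).} I restrict the integral to the Bergman ball $E(z,r)$. There $1-|u|^2\simeq 1-|z|^2\simeq|1-\langle z,u\rangle|$ and $|1-\langle w,u\rangle|\simeq|1-\langle w,z\rangle|$, and the $v_\delta$-measure of $E(z,r)$ is $\simeq(1-|z|^2)^{n+1+\delta}$. This gives exactly the first summand in (3) and (4). The ball around $w$ gives the second summand in (4).

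\emph{Lower bound in (2), the main obstacle.} Here one needs a point $a$ with
\[
1-|a|^2\simeq|1-\langle z,a\rangle|\simeq|1-\langle w,a\rangle|\simeq D,
\]
after which the Bergman ball around $a$ gives $D^{n+1+\delta}\cdot D^{-b-c}$. I would first try $a=(1-\epsilon D)\zeta$ with $\zeta$ the projection to the sphere of the ``midpoint'' direction of $z$ and $w$. The upper bounds $|1-\langle z,a\rangle|,|1-\langle w,a\rangle|\lesssim D$ follow from the quasi-triangle inequality, and $|1-\langle\cdot,a\rangle|\ge 1-|a|\gtrsim D$ gives the matching lower bounds. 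The delicate part is doing this uniformly when $z$ or $w$ is far from the boundary, that is when $D\simeq1$. In that regime the integral is bounded below by a constant anyway, and I would treat it separately.
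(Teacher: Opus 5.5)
The paper does not prove this lemma. It imports it verbatim as Theorem~3.1 of \cite{ZhangLiShangGuo2018}, so there is no in-paper argument to compare against, and your proposal is a correct, self-contained replacement.

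Case (1), via H\"older and Lemma~\ref{lemma2.1}(1), is fine. For (2)--(4), your three-region split gives all the upper bounds. It rests on the triangle inequality for $|1-\langle\cdot,\cdot\rangle|^{1/2}$ on the closed ball and on the tent bound $\int_{\{|1-\langle z,u\rangle|<s\}}(1-|u|^2)^\delta\,{\rm d}v(u)\lesssim s^{n+1+\delta}$. That bound is uniform in $z$: the set is empty unless $s>1-|z|$, and for $z\neq0$ it then lies in $\{u:|1-\langle z/|z|,u\rangle|<2s\}$. The one-Bergman-ball lower bounds in (3)--(4) are also correct; in fact they hold for all $b,c\ge0$.

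Compared with the citation, your route shows where each hypothesis enters. The condition $b+c>n+1+\delta$ is used only on the far region. The dichotomy $b<n+1+\delta$ versus $b>n+1+\delta$ matters only on the near region. Note also that the paper only ever uses the \emph{upper} bounds in (3) and (4), in Propositions~\ref{proposition3.1} and \ref{proposition3.2}. Those are the easiest part of your argument, so the paper could be made self-contained at low cost.

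For the lower bound in (2), you can avoid both the midpoint and the separate regime $D\simeq1$. Take $a=(1-D/4)z/|z|$, or any $a$ with $|a|=3/4$ if $z=0$ (where $D=1$).
\begin{itemize}
\item Since $1-|z|\le1-|\langle z,w\rangle|\le D$, you get $D/4\le|1-\langle z,a\rangle|\le(1-|z|)+D/4\le2D$.
\item The triangle inequality then gives $|1-\langle w,a\rangle|\le(D^{1/2}+(2D)^{1/2})^2\lesssim D$.
\item The reverse bound $|1-\langle w,a\rangle|\ge1-|a|=D/4$ is automatic.
\item Finally, $1-|a|^2\simeq D$ because $D<2$.
\end{itemize}

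As you wrote it, the midpoint claim does not follow from the triangle inequality alone. You would first have to check $1-|m|\lesssim D$ and $|1-\langle z,m\rangle|\lesssim D$ for $m=(z+w)/2$. Both are true, by expanding $1-|m|^2$. Also, $m$ may vanish when $D\simeq1$.
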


We next recall the generalized Schur's test. For related forms for nonnegative
integral operators, see \cite{Okikiolu1970}; for an
$L^p$--$L^q$ version on the unit ball, see \cite{Zhao2015}.

\begin{lemma}[Theorem A in \cite{GSZ2025}]\label{lemma2.3}
Let $\mu_1$ and $\mu_2$ be two positive measures on the space $X$ and let $K(x,y)$ be a nonnegative measurable function on $X\times X$. Let $T$ be the integral operator defined with kernel $K$ as follows:
\[
    Tf(x)=\int_{X}K(x,y)f(y){\rm d}\mu_1(y).
\]
Suppose $1<p\leq q<\infty$ and $\frac{1}{p}+\frac{1}{p'}=1$. Let $\gamma_1, \gamma_2>0$ with $\gamma_1+\gamma_2=1$. Assume that there exist two positive functions $h_1, h_2$ and positive constants $C_1, C_2$ such that
\[
    \int_{X}h_1(y)^{p'}K(x,y)^{\gamma_1 p'}{\rm d}\mu_1(y)\leq C_1h_2(x)^{p'}
\]
for almost all $x\in X$, and 
\[
    \int_{X}h_2(x)^qK(x,y)^{\gamma_2 q}{\rm d}\mu_2(x)\leq C_2h_1(y)^q
\]
for almost all $y\in X$. Then the operator $T: L^p(X,{\rm d}\mu_1)\to L^q(X, {\rm d}\mu_2)$ is bounded.
\end{lemma}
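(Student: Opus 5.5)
This is the classical two-weight Schur argument combined with Hölder's inequality; I will follow Okikiolu's scheme. The idea is to split the kernel as $K=K^{\gamma_1}K^{\gamma_2}$ and apply Hölder with exponents $p,p'$ to the inner integral, putting the weight $h_1$ on the $p'$-side:
\[
|Tf(x)|\le \Big(\int_X h_1(y)^{p'}K(x,y)^{\gamma_1p'}\,{\rm d}\mu_1(y)\Big)^{1/p'}\Big(\int_X \frac{K(x,y)^{\gamma_2p}|f(y)|^p}{h_1(y)^p}\,{\rm d}\mu_1(y)\Big)^{1/p}.
\]
The first factor is at most $C_1^{1/p'}h_2(x)$ by the first hypothesis. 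So $|Tf(x)|^q\le C_1^{q/p'}\big(h_2(x)^p\,F(x)\big)^{q/p}$, where
\[
F(x)=\int_X K(x,y)^{\gamma_2p}|f(y)|^ph_1(y)^{-p}\,{\rm d}\mu_1(y).
\]

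Next I integrate in $x$ against $\mu_2$. Write $s=q/p\ge1$ and use Minkowski's integral inequality in $L^{s}(\mu_2)$ to move the $\mu_2$-norm inside the $\mu_1$-integral:
\[
\Big(\int_X \big(h_2^pF\big)^{s}\,{\rm d}\mu_2\Big)^{1/s}\le\int_X \frac{|f(y)|^p}{h_1(y)^p}\Big(\int_X h_2(x)^{q}K(x,y)^{\gamma_2q}\,{\rm d}\mu_2(x)\Big)^{p/q}{\rm d}\mu_1(y).
\]
Here I used $(h_2^pK^{\gamma_2p})^{s}=h_2^qK^{\gamma_2q}$. By the second hypothesis the inner integral is at most $C_2h_1(y)^q$, so its $p/q$ power is at most $C_2^{p/q}h_1(y)^p$. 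This cancels the factor $h_1(y)^{-p}$ and leaves $C_2^{p/q}\|f\|_{L^p(\mu_1)}^p$. Raising to the power $1/p$ gives
\[
\|Tf\|_{L^q(\mu_2)}\le C_1^{1/p'}C_2^{1/q}\|f\|_{L^p(\mu_1)}.
\]

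The main point requiring care is the Minkowski step. It needs $q\ge p$, which is exactly where that assumption enters, and it needs measurability/Tonelli justification for nonnegative functions, which holds since everything is nonnegative. One also checks that $Tf$ is well defined a.e. This follows from the same bound applied to $|f|$, because the final estimate is finite for $f\in L^p(\mu_1)$.
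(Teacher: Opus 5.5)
Your argument is correct: splitting $K=K^{\gamma_1}K^{\gamma_2}$, applying H\"older with the weight $h_1$ on the $p'$-side, and then Minkowski's integral inequality in $L^{q/p}(\mu_2)$ is the standard proof of this generalized Schur test (in the line of Okikiolu and Zhao), and it gives the bound $\|T\|\le C_1^{1/p'}C_2^{1/q}$. The paper gives no proof of its own and simply quotes the result as Theorem A of \cite{GSZ2025}; the only points you should make explicit are that the first hypothesis is needed for $\mu_2$-a.e.\ $x$ and the second for $\mu_1$-a.e.\ $y$, and that Tonelli and Minkowski's integral inequality require $\sigma$-finite measures, not merely nonnegative integrands (this holds in the paper's application, where $\mu_1=\mu_2=v_\alpha$).
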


 We next recall the lattice of the atomic decomposition for weighted
Bergman spaces.

\begin{proposition}[Theorem 32 in \cite{ZhaoZhu2008}]\label{proposition2.4}
Let $\alpha>-1$, $0<p<\infty$, and choose
\[
    M>n\max\left\{1,\frac{1}p\right\}+\frac{\alpha+1}{p}.
\]
There is a Bergman-metric separated lattice
$\{w_j\}\subset\mathbb B_n$ such that every $f\in A_\alpha^p(\mathbb B_n)$
has a representation
\begin{equation}\label{atomic-decomposition}
    f(z)=\sum_{j=1}^{\infty}c_j a_{w_j,M}^{(p)}(z),
    \qquad
    a_{w,M}^{(p)}(z)=
    \frac{(1-|w|^2)^{M-(n+1+\alpha)/p}}{(1-\langle z,w\rangle)^M},
    \end{equation}
where $\{c_j\}\in\ell^p$.  The series converges in $A_\alpha^p$ and
locally uniformly, and
\[
    \|f\|_{p,\alpha}\simeq
    \inf\left\{\|\{c_j\}\|_{\ell^p}:
    f=\sum_jc_ja_{w_j,M}^{(p)}\right\}.
\]
\end{proposition}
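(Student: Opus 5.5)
The statement just given is Proposition~\ref{proposition2.4}, the atomic decomposition for $A^p_\alpha(\mathbb B_n)$, quoted from Theorem 32 in \cite{ZhaoZhu2008}. Although it could simply be cited, I would prove it in three steps: the atoms are uniformly bounded, an arbitrary $\ell^p$ combination of atoms converges with the right norm bound, and every $f$ admits such a representation. The last step goes through a discretization of a reproducing formula.

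\textbf{Atoms are bounded.} Put $s=M-(n+1+\alpha)/p$. Then
\[
\|a^{(p)}_{w,M}\|_{p,\alpha}^p=(1-|w|^2)^{sp}\int_{\mathbb B_n}\frac{(1-|z|^2)^\alpha}{|1-\langle z,w\rangle|^{Mp}}\,{\rm d}v(z).
\]
The condition on $M$ gives $Mp>n+1+\alpha$, so Lemma~\ref{lemma2.1}(3) applies with $c=Mp-n-1-\alpha>0$. This yields $\|a^{(p)}_{w,M}\|_{p,\alpha}\lesssim 1$ uniformly in $w$.

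\textbf{Synthesis bound.} Let $\{w_j\}$ be any separated sequence and $\{c_j\}\in\ell^p$.

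For $0<p\le1$, the $p$-triangle inequality gives
\[
\Big\|\sum_j c_j a_j\Big\|_{p,\alpha}^p\le \sum_j |c_j|^p\|a_j\|_{p,\alpha}^p\lesssim\|\{c_j\}\|_{\ell^p}^p .
\]

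For $p>1$, I would use duality against $A^{p'}_\alpha$. One could also estimate $\sum_j|c_j||a_j(z)|$ directly: apply Hölder with weights $(1-|w_j|^2)^{\varepsilon}$, replace the sums over the separated lattice by integrals over Bergman balls, and then invoke Lemma~\ref{lemma2.1} again. This is where $M>n/p+(\alpha+1)/p$ is used, with $n\max\{1,1/p\}$ covering both regimes. Locally uniform convergence follows from the pointwise bound on compacta.

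\textbf{Every $f$ has a representation (main obstacle).} Choose $b$ large so that the reproducing formula
\[
f(z)=\int_{\mathbb B_n}\frac{f(w)\,{\rm d}v_b(w)}{(1-\langle z,w\rangle)^{n+1+b}}
\]
holds on $A^p_\alpha$, with $n+1+b=M$. Take an $r$-lattice $\{w_j\}$ with a disjoint partition $\{D_j\}$, $D_j\subset B(w_j,r)$. Define the discretization
\[
Tf(z)=\sum_j \frac{v_b(D_j)\,f(w_j)}{(1-\langle z,w_j\rangle)^M}.
\]
Then $Tf=\sum_j c_j a_{w_j,M}$ with
\[
|c_j|\simeq (1-|w_j|^2)^{(n+1+\alpha)/p}|f(w_j)|.
\]
Subharmonicity gives $\|\{c_j\}\|_{\ell^p}\lesssim\|f\|_{p,\alpha}$.

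The key estimate is $\|f-Tf\|_{p,\alpha}\le C(r)\|f\|_{p,\alpha}$ with $C(r)\to0$ as $r\to0$. To get it, bound $|f(w)/(1-\langle z,w\rangle)^M-f(w_j)/(1-\langle z,w_j\rangle)^M|$ on $D_j$ by $r$ times a local maximal function of $f$ times $|1-\langle z,w\rangle|^{-M}$. Then apply the same Forelli--Rudin bounds as in the synthesis step, via Lemma~\ref{lemma2.1} and Lemma~\ref{lemma2.2}; for $p\le1$ one sums $p$-th powers instead.

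Once $\|I-T\|<1$ on $A^p_\alpha$ (a quasi-Banach space when $p<1$), $T$ is invertible by a Neumann series. Writing $f=T(T^{-1}f)$ gives the decomposition with $\|\{c_j\}\|_{\ell^p}\lesssim\|f\|_{p,\alpha}$. Combined with the synthesis bound, this gives the norm equivalence. Controlling the error $I-T$ uniformly, especially for $p<1$, is the delicate point.
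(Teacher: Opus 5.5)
The paper gives no proof of this statement; it quotes Theorem~32 of Zhao--Zhu. Your outline follows the standard Coifman--Rochberg scheme behind that result. However, it has a genuine gap exactly where you flag difficulty, and this is the only case the paper uses: Proposition~\ref{proposition3.5} applies the decomposition only for $0<p<1$.

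With a single $r$-lattice, applying the $p$-triangle inequality cell by cell gives
\[
\|f-Tf\|_{p,\alpha}^p\lesssim r^p\sum_j v_b(D_j)^p(1-|w_j|^2)^{n+1+\alpha-Mp}\sup_{B(w_j,R)}|f|^p\simeq r^{p+2np}\sum_j(1-|w_j|^2)^{n+1+\alpha}\sup_{B(w_j,R)}|f|^p ,
\]
because $v_b(D_j)\simeq r^{2n}(1-|w_j|^2)^{M}$. An $r$-lattice has about $r^{-2n}$ points per unit Bergman ball, so the last sum is of size $r^{-2n}\|f\|_{p,\alpha}^p$; this already happens for $f\equiv1$. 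You therefore only get $\|f-Tf\|_{p,\alpha}^p\lesssim r^{\,p-2n(1-p)}\|f\|_{p,\alpha}^p$, which does not tend to $0$ when $p\le 2n/(2n+1)$. The $p$-triangle inequality is too lossy over cells that shrink with $r$.

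The missing idea is a two-scale argument, as in the standard proofs. First use only the ordinary triangle inequality together with your local estimate to get
\[
|f-Tf(z)|\lesssim r\int_{\mathbb B_n}\frac{\sup_{B(w,R)}|f|}{|1-\langle z,w\rangle|^{M}}\,{\rm d}v_b(w),
\]
with $R$ fixed. Then discretize this positive integral over a lattice $\{a_k\}$ of \emph{fixed} radius, whose balls have overlap bounded independently of $r$. Only after that take $p$-th powers. Lemma~\ref{lemma2.1} (using $Mp>n+1+\alpha$) and the sub-mean-value property at a fixed scale then give $\|f-Tf\|_{p,\alpha}^p\lesssim r^p\|f\|_{p,\alpha}^p$. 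The atom centres remain the fine lattice $\{w_j\}$.

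Separately, for $p>1$ the condition that drives the synthesis bound is $M>n+(\alpha+1)/p$, not $M>n/p+(\alpha+1)/p$. In the H\"older argument you need an exponent with $p'\varepsilon>n$, to dominate the lattice sum by an integral, and $\varepsilon<M-(n+1+\alpha)/p$, to integrate in $z$. The weaker condition genuinely fails. If $M<n+(\alpha+1)/p$, take $c_j=(1-|w_j|^2)^{(n+1+\alpha)/p+\eta}$ with $-(\alpha+1)/p<\eta\le n-M$. Then $\{c_j\}\in\ell^p$, but $\sum_jc_ja_{w_j,M}^{(p)}(0)=\sum_j(1-|w_j|^2)^{M+\eta}=\infty$.

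For the same reason $b=M-n-1$ is not a free parameter to be chosen large. The reproducing formula holds on $A^p_\alpha$ exactly because the hypothesis on $M$ gives $b>(\alpha+1)/p-1$ when $p\ge1$. When $p<1$ it gives $b>(n+1+\alpha)/p-(n+1)$, and the formula follows via $A^p_\alpha\subset A^1_{(n+1+\alpha)/p-(n+1)}$.
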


We shall use the following ball version of the radial maximal estimate in \cite[Theorem~2.1]{AguilarHernandezMasPelaezRattya2026}.
\begin{lemma}\label{lemma2.5}
For $f\in H(\mathbb B_n)$, set
$\mathcal Rf(z)=\sup_{0\leq s\leq1}|f(sz)|$. Then
\begin{equation}\label{equa-radial-maximal}
    \|\mathcal Rf\|_{L_\alpha^p}\lesssim\|f\|_{p,\alpha},
    \qquad 0<p<\infty,\quad f\in A_\alpha^p(\mathbb B_n).
\end{equation}
\end{lemma}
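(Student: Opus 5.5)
This is a ball version of the radial maximal estimate of \cite[Theorem~2.1]{AguilarHernandezMasPelaezRattya2026}, so the plan is to reduce it to the one-dimensional inequality by slicing. For $\zeta\in\partial\mathbb B_n$ set $f_\zeta(\lambda)=f(\lambda\zeta)$, $\lambda\in\mathbb D$. Then $f_\zeta$ is holomorphic in $\mathbb D$, and
\[
\mathcal Rf(\lambda\zeta)=\sup_{0\le s\le1}|f_\zeta(s\lambda)|=\mathcal R f_\zeta(\lambda).
\]
Integration in polar coordinates gives, for any nonnegative measurable $g$,
\[
\int_{\mathbb B_n} g\,{\rm d}v_\alpha
= c\int_{\partial\mathbb B_n}\int_{\mathbb D} g(\lambda\zeta)\,|\lambda|^{2n-2}(1-|\lambda|^2)^{\alpha}\,{\rm d}A(\lambda)\,{\rm d}\sigma(\zeta),
\]
since $\int_{\mathbb D}h(\lambda)|\lambda|^{2n-2}{\rm d}A=2\int_0^1 r^{2n-1}\int_0^{2\pi}h(re^{i\theta})\,\frac{{\rm d}\theta}{2\pi}\,{\rm d}r$ and $\sigma$ is rotation invariant. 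So it suffices to prove, uniformly in $\zeta$, the one-variable estimate
\[
\int_{\mathbb D}(\mathcal R F)^p\,{\rm d}\omega\lesssim\int_{\mathbb D}|F|^p\,{\rm d}\omega,\qquad {\rm d}\omega(\lambda)=|\lambda|^{2n-2}(1-|\lambda|^2)^{\alpha}{\rm d}A(\lambda),
\]
for holomorphic $F$ on $\mathbb D$, and then integrate it over $\zeta\in\partial\mathbb B_n$.

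The weight $\omega$ is radial. The cited theorem (Aguilar-Hern\'andez, Mas, Pel\'aez, R\"atty\"a) gives the radial maximal inequality for radial weights in suitable doubling classes (at least $\hat{\mathcal D}$), and in particular for standard weights. The factor $|\lambda|^{2n-2}$ only degenerates near the origin, so to avoid checking membership in that class I would split the disk at $|\lambda|=1/2$.

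For $|\lambda|\ge 1/2$, the factor $|\lambda|^{2n-2}$ is comparable to $1$, so there $\omega$ is equivalent to the standard weight $(1-|\lambda|^2)^\alpha$.

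For $|\lambda|<1/2$, subharmonicity of $|F|^p$ and the mean value property give
\[
\sup_{|w|\le 1/2}|F(w)|^p\lesssim\int_{1/2<|w|<3/4}|F|^p\,{\rm d}A\lesssim\int_{\mathbb D}|F|^p\,{\rm d}\omega.
\]
This bounds $\mathcal RF$ on $\{|\lambda|<1/2\}$, since every $s\lambda$ lies in $\{|w|\le1/2\}$.

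For $|\lambda|\ge1/2$, I would write $\mathcal RF(\lambda)\le \sup_{|w|\le1/2}|F(w)|+\sup_{1/2\le s\le1}|F(s\lambda)|$. The first term is handled by the bound above. The second is dominated by $\mathcal R F$, which the cited theorem controls by $\|F\|_{L^p((1-|\lambda|^2)^\alpha{\rm d}A)}$. The part of that norm over $\{|\lambda|<1/2\}$ is again bounded by the annulus integral, and on the annulus $\omega\simeq(1-|\lambda|^2)^\alpha$. Combining the two regions gives the one-variable estimate with a constant independent of $\zeta$.

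The only delicate step is invoking the disk theorem with the correct hypotheses for standard weights, valid for all $0<p<\infty$. If that result is stated only for $\mathbb D$ with the normalized $(1-|z|^2)^\alpha$ weight, the reduction above suffices. Everything else is routine polar-coordinate bookkeeping, plus the observation that $\mathcal Rf(z)$ is measurable: it is lower semicontinuous, being a supremum of continuous functions of $z$.
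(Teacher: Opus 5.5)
Your proposal is correct and follows the paper's proof: restrict $f$ to the slices $\lambda\mapsto f(\lambda\zeta)$, apply the one-variable radial maximal theorem of \cite{AguilarHernandezMasPelaezRattya2026} on each slice, and integrate over $\partial\mathbb B_n$ using polar coordinates. The only difference is that the paper applies that theorem directly with the radial weight $\omega(r)=r^{2n-2}(1-r^2)^\alpha$, whereas your split at $|\lambda|=1/2$, with the subharmonicity bound near the origin, reduces everything to the standard weight; this is a harmless extra precaution (indeed, with $q=p$ the slice estimate holds for every radial weight, since the Hardy--Littlewood maximal theorem applied to the dilates $F_r(\lambda)=F(r\lambda)$ already gives the bound circle by circle).
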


\begin{proof}
Fix $\eta\in\partial\mathbb B_n$ and define $g_\eta(w)=f(w\eta)$ for $w\in\mathbb D$. Apply \cite[Theorem~2.1]{AguilarHernandezMasPelaezRattya2026} to $g_\eta$ with $M=1$. With $q=p$ and $\omega(r)=r^{2n-2}(1-r^2)^\alpha$, the corresponding mixed norm becomes
\[
\|h\|_{L_p^p(\omega)}^p
=\frac{1}{2\pi}\int_0^{2\pi}\int_0^1
|h(re^{i\theta})|^p r^{2n-1}(1-r^2)^\alpha
\,{\rm d}r\,{\rm d}\theta.
\]
Hence
\begin{equation}\label{equa-radial-maximal-slice}
\frac{1}{2\pi}\int_0^{2\pi}\int_0^1
R(g_\eta)(re^{i\theta})^p r^{2n-1}(1-r^2)^\alpha
\,{\rm d}r\,{\rm d}\theta
\lesssim
\frac{1}{2\pi}\int_0^{2\pi}\int_0^1
|g_\eta(re^{i\theta})|^p r^{2n-1}(1-r^2)^\alpha
\,{\rm d}r\,{\rm d}\theta.
\end{equation}
Moreover,
\[
R(g_\eta)(re^{i\theta})
=\sup_{0\le s\le1}|f(sre^{i\theta}\eta)|
=\mathcal Rf(re^{i\theta}\eta),
\qquad
g_\eta(re^{i\theta})=f(re^{i\theta}\eta).
\]
Integrating \eqref{equa-radial-maximal-slice} over $\eta\in\partial\mathbb B_n$ with respect to the normalized surface measure ${\rm d}\sigma$, and using the invariance of ${\rm d}\sigma$ under $\eta\mapsto e^{i\theta}\eta$, we obtain
\[
\begin{aligned}
  &\int_{\partial\mathbb B_n}\int_0^1\mathcal Rf(r\eta)^p
    r^{2n-1}(1-r^2)^\alpha\,{\rm d}r\,{\rm d}\sigma(\eta) \lesssim
    \int_{\partial\mathbb B_n}\int_0^1|f(r\eta)|^p
    r^{2n-1}(1-r^2)^\alpha\,{\rm d}r\,{\rm d}\sigma(\eta).
\end{aligned}
\]
The weighted polar-coordinate formula now gives
\eqref{equa-radial-maximal}.
\end{proof}

 Now we look into the measures on $[0,1)$.

\begin{lemma}[Proposition 2.1 in \cite{BaoSunWulan2022}]  \label{lemma2.6}
Let $\gamma>0$, $0\leq\rho<s$, and let $\mu$ be a positive Borel measure on $[0,1)$. Then $\mu$ is an $s$-Carleson measure on $[0,1)$, that is,
\[
    \sup_{r\in [0,1)}\frac{\hat{\mu}(r)}{(1-r)^{s}}<\infty,
\]
if and only if
\[
    \sup_{a\in [0,1)}\int_{[0,1)}\frac{(1-a)^{\gamma}}{(1-t)^\rho(1-at)^{s-\rho+\gamma}}{\rm d}\mu(t)<\infty.
\]
\end{lemma}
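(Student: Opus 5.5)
The statement is Lemma~\ref{lemma2.6}, a one-variable statement about measures on $[0,1)$. I will prove the two implications directly, using the dyadic decomposition $I_k=[r_k,r_{k+1})$ introduced in the Notation.

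\emph{Necessity.} Suppose the supremum over $a$ is finite. Fix $a\in[0,1)$ and restrict the integral to $t\in[a,1)$. On this range we have
\[
1-at\le 1-a^2\le 2(1-a)\quad\text{and}\quad 1-t\le 1-a .
\]
Since $\rho\ge0$, this gives $(1-t)^{-\rho}\ge(1-a)^{-\rho}$. The exponent $s-\rho+\gamma$ is positive, so $(1-at)^{-(s-\rho+\gamma)}\gtrsim(1-a)^{-(s-\rho+\gamma)}$. Therefore the integrand is bounded below by a constant multiple of
\[
(1-a)^{\gamma-\rho-(s-\rho+\gamma)}=(1-a)^{-s}.
\]
Integrating over $[a,1)$ yields $\hat\mu(a)(1-a)^{-s}\lesssim\sup_a\int\cdots<\infty$, which is the $s$-Carleson condition.

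\emph{Sufficiency.} Assume $\hat\mu(r)\le C(1-r)^s$ for all $r$. Fix $a$ and choose $N$ with $1-a\simeq\delta_N$. I split $[0,1)$ into the pieces $I_k$ and use two estimates.
\begin{itemize}
\item On $I_k$ we have $1-t\simeq\delta_k$, and $1-at\gtrsim\max\{1-a,1-t\}\simeq\max\{\delta_N,\delta_k\}$.
\item The Carleson condition gives $\mu(I_k)\le\hat\mu(r_k)\lesssim\delta_k^s$.
\end{itemize}
For $k\le N$, the integrand on $I_k$ is $\lesssim\delta_N^{\gamma}\delta_k^{-\rho}\delta_k^{-(s-\rho+\gamma)}$. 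Multiplying by $\mu(I_k)$, the contribution of $I_k$ is at most $\delta_N^\gamma\delta_k^{-\gamma}=2^{-\gamma(N-k)}$. This is a geometric series in $N-k$, summable because $\gamma>0$.

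For $k>N$, the integrand on $I_k$ is $\lesssim\delta_N^{\gamma}\delta_k^{-\rho}\delta_N^{-(s-\rho+\gamma)}$. Multiplying by $\mu(I_k)$, the contribution is at most $\delta_k^{s-\rho}\delta_N^{-(s-\rho)}=2^{-(s-\rho)(k-N)}$. This series is summable because $s>\rho$.

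Both sums are bounded independently of $a$, which gives the uniform bound. The main point to check is that each hypothesis is used exactly once: $\gamma>0$ for the inner sum ($k\le N$) and $\rho<s$ for the outer sum ($k>N$). One could instead integrate by parts against $\hat\mu$, but the dyadic sum handles the endpoint behaviour without extra care.
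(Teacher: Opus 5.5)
Your proof is correct in both directions. On $[a,1)$ the bounds $1-at\le 2(1-a)$ and $(1-t)^{-\rho}\ge(1-a)^{-\rho}$ give $\hat\mu(a)\lesssim(1-a)^{s}$. In the converse direction, $1-at\ge\max\{1-a,1-t\}$ together with $\mu(I_k)\le\hat\mu(r_k)\lesssim\delta_k^{s}$ reduces the integral to $\sum_{k\le N}2^{-\gamma(N-k)}+\sum_{k>N}2^{-(s-\rho)(k-N)}$, which is bounded independently of $a$.

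The paper gives no proof of Lemma~\ref{lemma2.6}; it is quoted from Proposition~2.1 of \cite{BaoSunWulan2022}. Your argument is therefore a self-contained replacement rather than an alternative to an in-paper proof. It fits the paper's toolkit well. Your sufficiency computation is the real-variable version of the dyadic estimate in Lemma~\ref{lemma3.3}: decompose over $I_k$, bound $\mu(I_k)$ by the Carleson condition, split at the scale $\delta_{k_0}\simeq|1-\langle z,w\rangle|$, and sum two geometric series whose convergence is controlled by the exponent conditions.

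You also mention the integration-by-parts route. That route writes $\int_{[0,1)}g\,{\rm d}\mu=g(0)\mu([0,1))+\int_0^1 g'(t)\hat\mu(t)\,{\rm d}t$ for the increasing kernel $g$, then applies the beta-type estimate $\int_0^1(1-t)^{b-1}(1-at)^{-b-c}\,{\rm d}t\simeq(1-a)^{-c}$ with $b,c>0$. Your dyadic route avoids differentiating the kernel and needs no auxiliary one-variable integral estimate. It also shows directly that $\gamma>0$ and $\rho<s$ are exactly the convergence conditions of the two series. The integration-by-parts route is shorter to write once the beta-type estimate is taken as known.
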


The next lemma is a key tool for studying the case $1<q<p<\infty$.
\begin{lemma}  \label{lemma2.7}
  Let $\lambda\in\mathbb{R}$, $\theta>0$ and $\mu$ be a finite positive Borel measure on $[0,1)$. Then
\begin{equation}\label{equa2.1}
\int_{0}^1\Big(\frac{\hat{\mu}(r)}{(1-r)^{\lambda}}\Big)^{\theta}\frac{{\rm d}r}{1-r}<\infty
\end{equation}
if and only if 
\begin{equation}\label{equa2.2}
\sum_{k=1}^{\infty}\Big(\frac{\hat{\mu}(r_k)}{\delta_k^{\lambda}}\Big)^{\theta}<\infty.
\end{equation}
\end{lemma}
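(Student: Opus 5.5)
The plan is to discretize the integral over the dyadic intervals $I_k=[r_k,r_{k+1})$, $k\ge 0$, and compare each piece with the value of $\hat\mu$ at an endpoint. Two elementary facts drive the argument. First, $\hat\mu$ is nonincreasing on $[0,1)$. Second, on $I_k$ we have $1-r\in(\delta_{k+1},\delta_k]$, so $1-r\simeq\delta_k$ with constants depending only on $\lambda$, and
\[
\int_{I_k}\frac{{\rm d}r}{1-r}=\log\frac{\delta_k}{\delta_{k+1}}=\log 2 .
\]
Since $\lambda\in\mathbb R$ may have either sign, I would simply record $(1-r)^{-\lambda}\simeq\delta_k^{-\lambda}$ on $I_k$, with constants $2^{|\lambda|}$; no sign discussion is needed.

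For the upper bound of the integral, I use monotonicity on $I_k$: $\hat\mu(r)\le\hat\mu(r_k)$ there. This gives
\[
\int_{I_k}\Big(\frac{\hat{\mu}(r)}{(1-r)^{\lambda}}\Big)^{\theta}\frac{{\rm d}r}{1-r}\lesssim\Big(\frac{\hat\mu(r_k)}{\delta_k^{\lambda}}\Big)^{\theta}.
\]
Summing over $k\ge0$ bounds the integral in \eqref{equa2.1} by
\[
(\hat\mu(0))^\theta+\sum_{k\ge1}\Big(\frac{\hat\mu(r_k)}{\delta_k^{\lambda}}\Big)^{\theta}.
\]
The $k=0$ term is $\mu([0,1))^\theta$, which is finite because $\mu$ is finite; this is exactly where finiteness enters. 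Hence \eqref{equa2.2} implies \eqref{equa2.1}.

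For the converse, I use the lower bound $\hat\mu(r)\ge\hat\mu(r_{k+1})$ for $r\in I_k$. This yields
\[
\Big(\frac{\hat\mu(r_{k+1})}{\delta_{k+1}^{\lambda}}\Big)^{\theta}\lesssim\int_{I_k}\Big(\frac{\hat{\mu}(r)}{(1-r)^{\lambda}}\Big)^{\theta}\frac{{\rm d}r}{1-r},
\]
using $\delta_{k+1}\simeq\delta_k\simeq 1-r$ on $I_k$. Summing over $k\ge0$ bounds $\sum_{k\ge1}(\hat\mu(r_k)/\delta_k^\lambda)^\theta$ by a constant times the integral in \eqref{equa2.1}. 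So \eqref{equa2.1} implies \eqref{equa2.2}, and the lemma follows.

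There is no real obstacle here. The only points needing care are that the index shift $k\mapsto k+1$ in the second direction stays within the range $k\ge1$, and that the first interval $I_0$ is controlled by the finiteness of $\mu$ rather than by the sequence.
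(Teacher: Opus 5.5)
Your proposal is correct and matches the paper's proof: a two-sided comparison on each dyadic interval $I_k$ using monotonicity of $\hat\mu$ and $\int_{I_k}\frac{{\rm d}r}{1-r}=\log 2$, the index shift $k\mapsto k+1$ for the lower bound, and finiteness of $\mu$ to control the piece over $I_0=[0,1/2)$. The only cosmetic difference is that you bound the $I_0$ contribution explicitly by $\hat\mu(0)^\theta$, whereas the paper simply notes that the integral over $[0,\frac12]$ is finite.
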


\begin{proof}
For $k\geq 0$, recall that $\delta_k=2^{-k}$ and $r_k=1-2^{-k}$.
Denote by 
\[
    I_k=[1-\delta_k,1-\delta_{k+1}),\quad k\geq 0.
\]
Then $\int_{I_k}\frac{{\rm d}r}{1-r}=\log 2$. When $r\in I_k$, we have $1-r\simeq \delta_k=2\delta_{k+1}$ and 
\[
    \hat{\mu}(r_k)\geq \hat{\mu}(r)\geq \hat{\mu}(r_{k+1}).
\]
It follows that
\begin{equation*}
\left(\frac{\hat{\mu}(r_{k+1})}{\delta_{k+1}^{\lambda}}\right)^{\theta}\lesssim \int_{I_k}\left(\frac{\hat{\mu}(r)}{(1-r)^{\lambda}}\right)^{\theta}\frac{{\rm d}r}{1-r}\lesssim \left(\frac{\hat{\mu}(r_k)}{\delta_k^{\lambda}}\right)^{\theta}.
\end{equation*}
Therefore,
\begin{align*}
\sum_{k=1}^{\infty}\left(\frac{\hat{\mu}(r_k)}{\delta_k^{\lambda}}\right)^{\theta}&\lesssim \sum_{k=0}^{\infty}\int_{I_k}\left(\frac{\hat{\mu}(r)}{(1-r)^{\lambda}}\right)^{\theta}\frac{{\rm d}r}{1-r}=\int_{0}^1\left(\frac{\hat{\mu}(r)}{(1-r)^{\lambda}}\right)^{\theta}\frac{{\rm d}r}{1-r}\\
&\lesssim \int_{0}^{\frac{1}{2}}\left(\frac{\hat{\mu}(r)}{(1-r)^{\lambda}}\right)^{\theta}\frac{{\rm d}r}{1-r}+\sum_{k=1}^{\infty}\left(\frac{\hat{\mu}(r_k)}{\delta_k^{\lambda}}\right)^{\theta}.
\end{align*}
Note that the integral over $[0,\frac{1}{2}]$ is finite because $\mu$ is finite. Hence the equivalence between \eqref{equa2.1} and \eqref{equa2.2} is established.
\end{proof}

With the same strategy to study the operator $\mathcal C_{\mu,\beta}$ on unit disk, we write the operator $\mathcal{C}_{\mu,\beta}^{\xi}$ as an integral operator with a kernel.

\begin{lemma}\label{lemmaKR}
Let $\alpha>-1$, $\beta>0$ and $\delta\geq 0$. If $f\in H(\mathbb B_n)$, then the following formula holds:
\begin{equation}\label{equa2.3}
    \big(\mathcal{C}_{\mu,\beta}^{\xi}f\big)(z)=\frac{c_{\alpha+\delta}}{c_{\alpha}}\int_{\mathbb{B}_n}f(w)K_{\mu,\beta,\delta}^{\xi}(z,w){\rm d}v_{\alpha}(w),
\end{equation}
where 
\begin{equation}\label{equa2.4}
K_{\mu,\beta,\delta}^{\xi}(z,w)=\int_{[0,1)}\frac{(1-|w|^2)^{\delta}}{(1-t\langle z,\xi\rangle)^{\beta}(1-t\langle z,w\rangle)^{n+1+\alpha+\delta}}{\rm d}\mu(t).
\end{equation}
\end{lemma}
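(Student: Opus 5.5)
The plan is to apply the weighted Bergman reproducing formula to the dilated function $f_t(w)=f(tw)$ and then integrate against $\mu$. Fix $t\in[0,1)$. The function $f_t$ is holomorphic on a neighbourhood of $\overline{\mathbb B_n}$, so it is bounded and lies in $A^1_{\alpha+\delta}(\mathbb B_n)$. Since $\alpha+\delta>-1$, the reproducing formula for $A^1_{\alpha+\delta}$ (Theorem 2.2 in Zhu's book) gives
\[
f(tz)=\int_{\mathbb B_n}\frac{f(tw)}{(1-\langle z,w\rangle)^{n+1+\alpha+\delta}}\,{\rm d}v_{\alpha+\delta}(w).
\]
I do not want to keep $f(tw)$ here, because the target kernel has $f(w)$ and the parameter $t$ attached to $z$. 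So the first step is to reproduce instead in the variable $tz$ with the undilated $f$. This requires care, since $f$ itself need not lie in any $A^1$ space.

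I would handle this in two stages. First suppose $f$ is holomorphic on a neighbourhood of the closed ball, say a polynomial or $f_s$ with $s<1$. Then for every $\zeta\in\mathbb B_n$,
\[
f(\zeta)=\int_{\mathbb B_n}\frac{f(w)}{(1-\langle \zeta,w\rangle)^{n+1+\alpha+\delta}}\,{\rm d}v_{\alpha+\delta}(w).
\]
Taking $\zeta=tz$ and writing ${\rm d}v_{\alpha+\delta}=\frac{c_{\alpha+\delta}}{c_\alpha}(1-|w|^2)^\delta\,{\rm d}v_\alpha$, I divide by $(1-t\langle z,\xi\rangle)^\beta$ and integrate in ${\rm d}\mu(t)$.

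Interchanging the $t$- and $w$-integrals by Fubini needs a bound, uniform in $t\in[0,1)$, for fixed $z$. We have $|1-t\langle z,w\rangle|\ge 1-|z|$ and $|1-t\langle z,\xi\rangle|\ge 1-|z|$, while $|f|$ and $(1-|w|^2)^\delta$ are bounded. Hence the integrand is dominated by a constant $C(z)$. To make this an integrable majorant I need $\mu$ finite, or at least I need the operator to be defined. I would note that the definition of $\mathcal C^\xi_{\mu,\beta}$ already implicitly requires $\int|f(tz)|\,{\rm d}\mu(t)<\infty$, and that finiteness of $\mu$ is assumed wherever the formula is used. With that, Fubini gives exactly \eqref{equa2.3} with the kernel \eqref{equa2.4}.

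The main obstacle is the passage to general $f\in H(\mathbb B_n)$, where $\int|f|(1-|w|^2)^\delta\,{\rm d}v_\alpha$ may diverge. I would state the formula for the $f$ for which the right-hand side converges absolutely; in practice this means $f\in A^1_{\alpha+\delta}$, since $\delta$ can be chosen large later. I would then obtain it for such $f$ by approximating with the dilates $f_s$, $s\to1^-$, using dominated convergence on both sides. On the left, $f_s(tz)\to f(tz)$ boundedly for fixed $z$, because $|tz|\le|z|<1$. On the right, the kernel is bounded for fixed $z$ by $C(z)$ uniformly in $t$ and $w$, and $f_s\to f$ in $A^1_{\alpha+\delta}$. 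Combining the two limits yields \eqref{equa2.3}.
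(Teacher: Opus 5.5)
Your proposal is correct and is essentially the paper's argument: the paper likewise applies the reproducing formula for $A^1_{\alpha+\delta}$ to $f$ itself at the point $tz$, rewrites ${\rm d}v_{\alpha+\delta}=\frac{c_{\alpha+\delta}}{c_\alpha}(1-|w|^2)^{\delta}\,{\rm d}v_\alpha$, and integrates against $\mu$ via Fubini. Your observation that this tacitly needs $f\in A^1_{\alpha+\delta}$ and $\mu([0,1))<\infty$ (the dilate's own formula would only give $f(tw)$) makes explicit what the paper's one-line justification leaves implicit, and your approximation stage is superfluous, since Theorem 2.2 of Zhu already gives $f(\zeta)=\int_{\mathbb B_n}f(w)(1-\langle\zeta,w\rangle)^{-(n+1+\alpha+\delta)}\,{\rm d}v_{\alpha+\delta}(w)$ for all $f\in A^1_{\alpha+\delta}$ and $\zeta\in\mathbb B_n$ (if you keep it, use the majorant $|K^{\xi}_{\mu,\beta,\delta}(z,w)|\le \mu([0,1))(1-|z|)^{-\beta-(n+1+\alpha+\delta)}(1-|w|^2)^{\delta}$, so that it is $A^1_{\alpha+\delta}$-convergence of $f_s$ that you actually use).
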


\begin{proof}
For any $t\in [0,1)$, the function $z\mapsto f(tz)$ is holomorphic on $\overline{\mathbb{B}_n}$. By the reproducing formula (see \cite[Theorem 2.2]{Zhu2005}), we get
\[
    f(tz)=\frac{c_{\alpha+\delta}}{c_{\alpha}}\int_{\mathbb{B}_n}\frac{f(w)(1-|w|^2)^{\delta}}{(1-t\langle z,w\rangle)^{n+1+\alpha+\delta}}{\rm d}v_{\alpha}(w).
\]
Then the formula \eqref{equa2.3} is established by Fubini's theorem.
\end{proof}

\section{Proof of Theorem 1.1}

 In this section, we prove Theorem \ref{theorem1.1}. To prove the sufficiency, we introduce an operator analogous to the one in \cite{GSZ2025}. Let 
\[
    \widetilde{K}_{\mu,\beta,\delta}^{\xi}(z,w)=\int_{[0,1)}\frac{(1-|w|^2)^{\delta}}{|1-t\langle z,\xi\rangle|^{\beta}|1-t\langle z,w\rangle|^{n+1+\alpha+\delta}}{\rm d}\mu(t),
\]
and define
\begin{equation}\label{equa3.01}
\mathcal{B}_{\mu,\beta,\delta}^{\xi}f(z)=\frac{c_{\alpha+\delta}}{c_{\alpha}}\int_{\mathbb{B}_n}f(w)\widetilde{K}_{\mu,\beta,\delta}^{\xi}(z,w){\rm d}v_{\alpha}(w).
\end{equation}
Clearly, if $\mathcal{B}_{\mu,\beta,\delta}^{\xi}: L_{\alpha}^p(\mathbb{B}_n)\to L_{\alpha}^q(\mathbb{B}_n)$ is bounded for some (any) $\delta\geq 0$ then $\mathcal{C}_{\mu,\beta}^\xi: A_{\alpha}^p(\mathbb{B}_n)\to A_{\alpha}^q(\mathbb{B}_n)$ is bounded.

We first separate the sufficiency into two cases: $p>1$ and $p=1$.
\begin{proposition}\label{proposition3.1}
Let $\alpha>-1$, $\beta>0$ and $1<p\leq q<\infty$. Set 
\[
    \lambda=\beta+(n+1+\alpha)\Big(\frac{1}{p}-\frac{1}{q}\Big).
\]
If $\mu$ is a $\lambda$-Carleson measure on $[0,1)$, then $\mathcal{B}_{\mu,\beta,\delta}^{\xi}: L_{\alpha}^p(\mathbb{B}_n)\to L_{\alpha}^q(\mathbb{B}_n)$ is bounded for every $\delta\geq 0$. Consequently, $\mathcal{C}_{\mu,\beta}^{\xi}: A_{\alpha}^p(\mathbb{B}_n)\to A_{\alpha}^q(\mathbb{B}_n)$ is bounded.
\end{proposition}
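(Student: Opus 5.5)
We apply the generalized Schur test (Lemma~\ref{lemma2.3}) to the positive kernel $\widetilde K^{\xi}_{\mu,\beta,\delta}$, with $X=\mathbb B_n$ and $\mathrm d\mu_1=\mathrm d\mu_2=\mathrm dv_\alpha$. The test functions are powers of $1-|\cdot|^2$: $h_1(w)=(1-|w|^2)^{-s}$ and $h_2(z)=(1-|z|^2)^{-\tau}$, with $s,\tau>0$ and $\gamma_1,\gamma_2$ fixed later. Before that we remove the $\mu$-integral from the kernel. Since $|1-t\langle z,\xi\rangle|\ge 1-t|z|$ and $|1-t\langle z,w\rangle|\ge 1-t|z|$, we have $\widetilde K\le \int (1-|w|^2)^\delta |1-t\langle z,\xi\rangle|^{-\beta}|1-t\langle z,w\rangle|^{-(n+1+\alpha+\delta)}\,\mathrm d\mu(t)$. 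The aim is the pointwise bound
\[
\widetilde K^{\xi}_{\mu,\beta,\delta}(z,w)\lesssim \frac{(1-|w|^2)^{\delta}}{|1-\langle z,w\rangle|^{\,n+1+\alpha+\delta+\lambda-\beta}\,|1-\langle z,\xi\rangle|^{\beta}}\quad\text{(roughly)}.
\]
The cleaner route, which I will follow, is to bound the $t$-integral by
\[
\int_{[0,1)}\frac{\mathrm d\mu(t)}{(1-t|z|)^{\beta}|1-t\langle z,w\rangle|^{N}},
\]
use $|1-t\langle z,w\rangle|\gtrsim |1-\langle z,w\rangle|$ whenever $t\ge |z|$, and split $[0,1)$ at $t=|z|$ dyadically. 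A dyadic summation against $\hat\mu(r)\lesssim(1-r)^\lambda$, i.e.\ integration by parts as in Lemma~\ref{lemma2.6}, then gives
\[
\widetilde K(z,w)\lesssim \frac{(1-|w|^2)^\delta}{|1-\langle z,w\rangle|^{\,n+1+\alpha+\delta+\beta-\lambda}}\cdot\frac{1}{(1-|z|)^{\beta}}\cdot(1-|z|)^{\lambda}\cdots
\]
This is the main obstacle. Since $\lambda\ge\beta$, I expect the cleanest usable form to be
\[
\widetilde K(z,w)\lesssim \frac{(1-|w|^2)^\delta}{|1-\langle z,w\rangle|^{\,n+1+\alpha+\delta-(\lambda-\beta)}},
\]
where $\lambda-\beta=(n+1+\alpha)(1/p-1/q)$ is exactly the Bergman-space gap. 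To prove it, split $\mu$ into $t\in[0,|z|)$ and $t\in[|z|,1)$. On the second piece, $\hat\mu(|z|)\lesssim(1-|z|)^\lambda$ together with $|1-t\langle z,w\rangle|\gtrsim$ both $1-|z|$ and $|1-\langle z,w\rangle|$ handles things. On the first piece, sum over the dyadic $I_k$ with $1-t\simeq\delta_k\ge 1-|z|$, use $\mu(I_k)\lesssim\delta_k^\lambda$ and $|1-t\langle z,w\rangle|\gtrsim \max(\delta_k,|1-\langle z,w\rangle|)$, and $(1-t|z|)\gtrsim\delta_k$. The resulting geometric series is dominated by its term with $\delta_k\simeq|1-\langle z,w\rangle|$, provided the total exponent $n+1+\alpha+\delta+\beta$ exceeds $\lambda$. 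This holds because $\lambda<\beta+n+1+\alpha$ (as $1/p-1/q<1$), and taking $\delta$ large only helps. The $\xi$-factor is discarded via $|1-t\langle z,\xi\rangle|\ge 1-t$.

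With this kernel bound, the operator is dominated by a Forelli--Rudin type operator with kernel $(1-|w|^2)^\delta|1-\langle z,w\rangle|^{-(n+1+\alpha+\delta-\sigma)}$, where $\sigma=(n+1+\alpha)(1/p-1/q)$. Its $L^p_\alpha\to L^q_\alpha$ boundedness follows by verifying the two inequalities of Lemma~\ref{lemma2.3}. Choose $\gamma_1=\sigma'$-adapted splits, e.g.\ $\gamma_1 p'$ and $\gamma_2 q$ such that each integral reduces to $J_{c,\delta'}$ in Lemma~\ref{lemma2.1} with $c>0$. In the first inequality, integrate in $w$ with weight $(1-|w|^2)^{\alpha+\delta\gamma_1p'-sp'}$; Lemma~\ref{lemma2.1}(3) then gives $(1-|z|^2)^{-c_1}$, and we match $c_1=\tau p'$. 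In the second, integrate in $z$ with weight $(1-|z|^2)^{\alpha-\tau q}$ and the factor $(1-|w|^2)^{\delta\gamma_2 q}$; this gives $(1-|w|^2)^{\delta\gamma_2q-c_2}$, and we match it to $(1-|w|^2)^{-sq}$. Solving these linear constraints for $s,\tau,\gamma_i$, subject to the $\delta'>-1$ and $c>0$ conditions, is routine once $\delta$ is large; the identity $\sigma=(n+1+\alpha)(1/p-1/q)$ is precisely what makes both exponent equations consistent. Boundedness of $\mathcal B^{\xi}_{\mu,\beta,\delta}$ follows, and boundedness of $\mathcal C^{\xi}_{\mu,\beta}$ then follows from Lemma~\ref{lemmaKR} as remarked before the proposition.
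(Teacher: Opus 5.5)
Your reduction fails in the diagonal case $p=q$, which the statement includes. There $\lambda=\beta$, so $\sigma=\lambda-\beta=0$. The claimed bound $\widetilde K^{\xi}_{\mu,\beta,\delta}(z,w)\lesssim(1-|w|^2)^{\delta}|1-\langle z,w\rangle|^{-N}$, with $N=n+1+\alpha+\delta$, is then false.

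The condition $N+\beta>\lambda$ that you check only makes the terms with $\delta_k\ge|1-\langle z,w\rangle|$ summable. For $\delta_k<|1-\langle z,w\rangle|$ the terms are $\mu(I_k)\delta_k^{-\beta}|1-\langle z,w\rangle|^{-N}\lesssim\delta_k^{\lambda-\beta}|1-\langle z,w\rangle|^{-N}$, and these decay only if $\lambda>\beta$. With $\lambda=\beta$, discarding the $\xi$-factor down to $1-t$ (as you propose) gives a divergent sum. Even keeping $1-t|z|\simeq\delta_k+(1-|z|)$ leaves a factor $1+\log\frac{|1-\langle z,w\rangle|}{1-|z|}$.

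This loss is real, not an artefact of the estimate. Let $\mu$ have mass $\delta_k^{\beta}$ at $r_k$ for each $k\ge1$, so that $\hat\mu(r)\simeq(1-r)^{\beta}$. Then $\widetilde K^{\xi}_{\mu,\beta,\delta}(r_m\xi,0)=\sum_k\delta_k^{\beta}(1-r_kr_m)^{-\beta}\ge 2^{-\beta}m\to\infty$. Your bound instead asserts $\widetilde K^{\xi}_{\mu,\beta,\delta}(z,0)\lesssim 1$.

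For $1<p<q<\infty$ your route is sound and genuinely different from the paper's. The pointwise bound is the dyadic computation of Lemma~\ref{lemma3.3} with $M$ replaced by $N$. The resulting Forelli--Rudin kernel passes Lemma~\ref{lemma2.3} with $\gamma_1=q/(p'+q)$ and $h_1=h_2=(1-|\cdot|^2)^{-s}$, $s$ small, for every $\delta\ge0$, not only large $\delta$.

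The paper never bounds $\widetilde K^{\xi}_{\mu,\beta,\delta}$ pointwise. It uses Minkowski's inequality to pull the $\mu$-integral outside and evaluates the $z$- or $w$-integral via Lemmas~\ref{lemma2.1}--\ref{lemma2.2}. Only then does it integrate in $t$ with Lemma~\ref{lemma2.6}, where the test-function exponent supplies the strictly positive parameter $\gamma=s/\gamma_i$. That is why $p=q$ needs no separate treatment there.

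To rescue your approach at $p=q$, keep the logarithm and absorb it: $1+\log\frac{|1-\langle z,w\rangle|}{1-|z|}\lesssim\bigl(\frac{|1-\langle z,w\rangle|}{1-|z|}\bigr)^{\varepsilon}$. This gives $\widetilde K^{\xi}_{\mu,\beta,\delta}(z,w)\lesssim(1-|z|^2)^{-\varepsilon}(1-|w|^2)^{\delta}|1-\langle z,w\rangle|^{-(N-\varepsilon)}$. That kernel is bounded on $L^p_\alpha$ by the classical Schur test with $h=(1-|\cdot|^2)^{-s}$ and $\varepsilon/p'<s<(\alpha+1-\varepsilon)/p$. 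Such an $s$ exists once $\varepsilon<(\alpha+1)/p$ (compare Lemma~\ref{lemma3.4}).
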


\begin{proof}
We are going to use the generalized Schur test in Lemma \ref{lemma2.3} to prove the boundedness of $\mathcal{B}_{\mu,\beta,\delta}^{\xi}$ from $L_{\alpha}^p(\mathbb{B}_n)$ to $L_{\alpha}^q(\mathbb{B}_n)$ for any $\delta\geq 0$. To this end, let $p'=p/(p-1)$ and set 
\begin{equation*}
\gamma_1=\frac{q}{p'+q},\quad \gamma_2=\frac{p'}{p'+q}.
\end{equation*}
Clearly, $\gamma_1+\gamma_2=1$ and
\begin{equation}\label{equa3.1}
p'\gamma_1=q\gamma_2=\frac{1}{1-1/p+1/q}\geq 1.
\end{equation}
Choose a number $s$ such that 
\begin{equation}\label{equa3.2}
    0<s<\min\left\{\frac{\alpha+1}{q},\frac{\alpha+1}{p'}\right\}.
\end{equation}
Let 
\[
    h_s(z)=(1-|z|^2)^{-s},\quad z\in\mathbb B_n.
\]
 Since $\delta\geq0$ and
\[
    1-|w|^2\leq2|1-t\langle z,w\rangle|,
\]
the factor $(1-|w|^2)^\delta/|1-t\langle z,w\rangle|^\delta$ is bounded.  Using Minkowski's inequality, we obtain
\begin{align*}
&\quad \int_{\mathbb{B}_n}h_s(w)^{p'}\widetilde{K}_{\mu,\beta,\delta}^{\xi}(z,w)^{\gamma_1p'}{\rm d}v_{\alpha}(w)\\
&\lesssim \int_{\mathbb{B}_n}(1-|w|^2)^{\alpha-sp'}\left(\int_{[0,1)}\frac{{\rm d}\mu(t)}{|1-t\langle z,\xi\rangle|^{\beta}|1-t\langle z,w\rangle|^{n+1+\alpha}}\right)^{\gamma_1p'}{\rm d}v(w)\\
&\lesssim \left[\int_{[0,1)}\frac{1}{|1-t\langle z,\xi\rangle|^{\beta}}\left(\int_{\mathbb{B}_n}\frac{(1-|w|^2)^{\alpha-sp'}}{|1-t\langle z,w\rangle|^{(n+1+\alpha)\gamma_1p'}}{\rm d}v(w)\right)^{1/(\gamma_1p')}{\rm d}\mu(t)\right]^{\gamma_1p'}.
\end{align*}
By \eqref{equa3.1} and \eqref{equa3.2}, we have $\alpha-sp'>-1$ and 
\[
    (n+1+\alpha)\gamma_1p'-(n+1)-(\alpha-sp')=(n+1+\alpha)(\gamma_1p'-1)+sp'>0.
\]
So it follows from Lemma \ref{lemma2.1} that
\begin{align*}
\int_{\mathbb{B}_n}\frac{(1-|w|^2)^{\alpha-sp'}}{|1-t\langle z,w\rangle|^{(n+1+\alpha)\gamma_1p'}}{\rm d}v(w)\lesssim \frac{1}{(1-t|z|)^{(n+1+\alpha)(\gamma_1p'-1)+sp'}}.
\end{align*}
Consequently, 
\begin{align*}
\int_{\mathbb{B}_n}h_s(w)^{p'}\widetilde{K}_{\mu,\beta,\delta}^{\xi}(z,w)^{\gamma_1p'}{\rm d}v_{\alpha}(w)\lesssim \left(\int_{[0,1)}\frac{{\rm d}\mu(t)}{(1-t|z|)^{\beta+(n+1+\alpha)(1-1/(\gamma_1p'))+s/\gamma_1}}\right)^{\gamma_1p'}.
\end{align*}
Note that $\beta+(n+1+\alpha)(1-\frac{1}{\gamma_1p'})+\frac{s}{\gamma_1}=\beta+(n+1+\alpha)(\frac{1}{p}-\frac{1}{q})+\frac{s}{\gamma_1}=\lambda+\frac{s}{\gamma_1}$ and $\frac{s}{\gamma_1}>0$. Using Lemma \ref{lemma2.6} with $\rho=0$, if $\mu$ is a $\lambda$-Carleson measure on $[0,1)$, then 
  \begin{align*}
\int_{[0,1)}\frac{{\rm d}\mu(t)}{(1-t|z|)^{\beta+(n+1+\alpha)(1-1/(\gamma_1p'))+s/\gamma_1}}\lesssim \frac{1}{(1-|z|^2)^{s/\gamma_1}},\quad \forall z\in\mathbb{B}_n.
\end{align*} 
Therefore,
\begin{equation}\label{equa3.3}
\int_{\mathbb{B}_n}h_s(w)^{p'}\widetilde{K}_{\mu,\beta,\delta}^{\xi}(z,w)^{\gamma_1p'}{\rm d}v_{\alpha}(w)\lesssim \left(\frac{1}{(1-|z|^2)^{s/\gamma_1}}\right)^{\gamma_1p'}=h_s(z)^{p'}.
\end{equation}

On the other hand, since $\gamma_2q\geq 1$, we again use Minkowski's inequality to obtain
\begin{equation}\label{equa3.4}
\begin{split}
&\quad \int_{\mathbb{B}_n}h_s(z)^q\widetilde{K}_{\mu,\beta,\delta}^{\xi}(z,w)^{\gamma_2q}{\rm d}v_{\alpha}(z)\\
&\lesssim \int_{\mathbb{B}_n}(1-|z|^2)^{\alpha-sq}\left(\int_{[0,1)}\frac{{\rm d}\mu(t)}{|1-t\langle z,\xi\rangle|^{\beta}|1-t\langle z,w\rangle|^{n+1+\alpha}}\right)^{\gamma_2q}{\rm d}v(z)\\
&\lesssim \left[\int_{[0,1)}\left(\int_{\mathbb{B}_n}\frac{(1-|z|^2)^{\alpha-sq}{\rm d}v(z)}{|1-t\langle z,\xi\rangle|^{\beta\gamma_2q}|1-t\langle z,w\rangle|^{(n+1+\alpha)\gamma_2q}}\right)^{1/(\gamma_2q)}{\rm d}\mu(t)\right]^{\gamma_2q}.
\end{split}
\end{equation}
Let $\delta'=\alpha-sq$, $b'=\beta\gamma_2q$ and $c'=(n+1+\alpha)\gamma_2q$. Then by \eqref{equa3.1} and \eqref{equa3.2}, we know that   $\delta'>-1$  and
\[
    c'-(n+1+\delta')=(n+1+\alpha)(\gamma_2q-1)+sq>0.
\]
Next we will discuss three cases: $b'<n+1+\delta'$, $b'>n+1+\delta'$ and $b'=n+1+\delta'$.

$\bullet${\it Case 1. $b'<n+1+\delta'$}. In this case, by item (3) in Lemma \ref{lemma2.2}, we obtain
\begin{equation}\label{equa3.5}
\begin{split}
&\quad \int_{\mathbb{B}_n}\frac{(1-|z|^2)^{\alpha-sq}}{|1-t\langle z,\xi\rangle|^{\beta\gamma_2 q}|1-t\langle z,w\rangle|^{(n+1+\alpha)\gamma_2 q}}{\rm d}v(z)\\
&\simeq \frac{1}{(1-t^2|w|^2)^{c'-(n+1+\delta')}}\cdot\frac{1}{|1-t^2\langle w,\xi\rangle|^{b'}}\\
&\lesssim \frac{1}{(1-t|w|)^{(n+1+\alpha)(\gamma_2 q-1)+sq+\beta\gamma_2 q}}=\frac{1}{(1-t|w|)^{\lambda\gamma_2 q+sq}}.
\end{split}
\end{equation}
Since $\mu$ is a $\lambda$-Carleson measure on $[0,1)$, by \eqref{equa3.4}, \eqref{equa3.5} and Lemma \ref{lemma2.6} (with $\rho=0$), we obtain
\begin{equation}\label{equa3.6}
\begin{split}
\int_{\mathbb{B}_n}h_s(z)^q\left[\widetilde{K}_{\mu,\beta,\delta}^{\xi}(z,w)\right]^{\gamma_2q}{\rm d}v_{\alpha}(z)&\lesssim \left(\int_{[0,1)}\frac{{\rm d}\mu(t)}{(1-t|w|)^{\lambda+s/\gamma_2}}\right)^{\gamma_2 q}\\
&\lesssim \frac{1}{(1-|w|^2)^{sq}}=h_s(w)^q.
\end{split}
\end{equation}
Combining \eqref{equa3.3} and \eqref{equa3.6}, by the generalized Schur's test in Lemma \ref{lemma2.3}, the operator $\mathcal{B}_{\mu,\beta,\delta}^{\xi}: L_{\alpha}^p(\mathbb{B}_n)\to L_{\alpha}^q(\mathbb{B}_n)$ is bounded. Then so is $\mathcal{C}_{\mu,\beta}^{\xi}: A_{\alpha}^p(\mathbb{B}_n)\to A_{\alpha}^q(\mathbb{B}_n)$.

$\bullet${\it Case 2. $b'>n+1+\delta'$}. In this case, we use item (4) in Lemma \ref{lemma2.2} to obtain
\begin{equation}\label{equa3.7}
\begin{split}
&\quad \int_{\mathbb{B}_n}\frac{(1-|z|^2)^{\alpha-sq}}{\left|1-t\langle z,\xi\rangle\right|^{\beta\gamma_2 q}\left|1-t\langle z,w\rangle\right|^{(n+1+\alpha)\gamma_2q}}{\rm d}v(z)\\
&\simeq \frac{1}{(1-t^2)^{b'-(n+1+\delta')}\left|1-t^2\langle w,\xi\rangle\right|^{c'}}+\frac{1}{(1-t^2|w|^2)^{c'-(n+1+\delta')}\left|1-t^2\langle w,\xi\rangle\right|^{b'}}\\
&\lesssim \frac{1}{(1-t)^{\beta\gamma_2 q+sq-(n+1+\alpha)}}\cdot\frac{1}{(1-t|w|)^{(n+1+\alpha)\gamma_2q}}.
\end{split}
\end{equation}
Since $\lambda+s/\gamma_2=\beta+s/\gamma_2-(n+1+\alpha)/(\gamma_2 q)+(n+1+\alpha)$, and $\mu$ is a $\lambda$-Carleson measure on $[0,1)$, by \eqref{equa3.4}, \eqref{equa3.7} and Lemma \ref{lemma2.6}, we obtain
\begin{equation}\label{equa3.8}
\begin{split}
&\quad \int_{\mathbb{B}_n}h_s(z)^q\left[\widetilde{K}_{\mu,\beta,\delta}^{\xi}(z,w)\right]^{\gamma_2q}{\rm d}v_{\alpha}(z)\\
&\lesssim \left(\int_{[0,1)}\frac{{\rm d}\mu(t)}{(1-t)^{\beta+s/\gamma_2-(n+1+\alpha)/(\gamma_2q)}(1-t|w|)^{n+1+\alpha}}\right)^{\gamma_2q}\lesssim \left(\frac{1}{(1-|w|^2)^{s/\gamma_2}}\right)^{\gamma_2 q}=h_s(w)^q.
\end{split}
\end{equation}
Combining \eqref{equa3.3} and \eqref{equa3.8}, by the generalized Schur's test again, we get $\mathcal{B}_{\mu,\beta,\delta}^{\xi}: L_{\alpha}^p(\mathbb{B}_n)\to L_{\alpha}^q(\mathbb{B}_n)$ is bounded. Hence  $\mathcal{C}_{\mu,\beta}^{\xi}: A_{\alpha}^p(\mathbb{B}_n)\to A_{\alpha}^q(\mathbb{B}_n)$ is also bounded.

$\bullet${\it Case 3. $b'=n+1+\delta'$}. In this case, we can choose $\varepsilon>0$ small enough such that $\varepsilon<\alpha+1-sq$. Then 
\begin{align*}
&\quad \int_{\mathbb{B}_n}\frac{(1-|z|^2)^{\alpha-sq}}{\left|1-t\langle z,\xi\rangle\right|^{\beta\gamma_2 q}\left|1-t\langle z,w\rangle\right|^{(n+1+\alpha)\gamma_2 q}}{\rm d}v(z)\\
&\lesssim \int_{\mathbb{B}_n}\frac{(1-|z|^2)^{\alpha-sq-\varepsilon}}{\left|1-t\langle z,\xi\rangle\right|^{\beta\gamma_2 q}\left|1-t\langle z,w\rangle\right|^{(n+1+\alpha)\gamma_2 q-\varepsilon}}{\rm d}v(z).
\end{align*}
Set $\delta''=\alpha-sq-\varepsilon=\delta'-\varepsilon$, $b''=\beta\gamma_2 q=b'$ and $c''=(n+1+\alpha)\gamma_2 q-\varepsilon=c'-\varepsilon$. From the choice of $\varepsilon$, we know that $\delta''>-1$ and 
\[
    b''-(n+1+\delta'')=b'-(n+1+\delta')+\varepsilon>0,
\]
\[
    c''-(n+1+\delta'')=c'-(n+1+\delta')>0.
\]
Thus, by an argument similar to that used in Case 2, we also get
\[
    \int_{\mathbb{B}_n}h_s(z)^q\left[\widetilde{K}_{\mu,\beta,\delta}^{\xi}(z,w)\right]^{\gamma_2 q}{\rm d}v_{\alpha}(z)\lesssim h_s(w)^q.
\]
By the generalized Schur's test again, $\mathcal{B}_{\mu,\beta,\delta}^{\xi}: L_{\alpha}^p(\mathbb{B}_n)\to L_{\alpha}^q(\mathbb{B}_n)$ is bounded, and so is $\mathcal{C}_{\mu,\beta}^{\xi}: A_{\alpha}^p(\mathbb{B}_n)\to A_{\alpha}^q(\mathbb{B}_n)$. The proof is now complete.
\end{proof}

\begin{proposition}\label{proposition3.2}
Let $\alpha>-1$, $\beta>0$ and $1\leq q<\infty$. Set 
\[
    \lambda=\beta+(n+1+\alpha)\Big(1-\frac{1}{q}\Big).
\]
If $\mu$ is a $\lambda$-Carleson measure on $[0,1)$, then $\mathcal{B}_{\mu,\beta,\delta}^{\xi}: L_{\alpha}^1(\mathbb{B}_n)\to L_{\alpha}^q(\mathbb{B}_n)$ is bounded for every $\delta>0$. Consequently, $\mathcal{C}_{\mu,\beta}^{\xi}: A_{\alpha}^1(\mathbb{B}_n)\to A_{\alpha}^q(\mathbb{B}_n)$ is bounded. 
\end{proposition}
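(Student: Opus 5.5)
Since $p=1$, Schur's test is unavailable, but an operator from $L^1$ into $L^q$ ($q\ge1$) with a positive kernel is bounded as soon as the $L^q$-norms of the kernel in the first variable are uniformly controlled. Concretely, Minkowski's integral inequality gives
\[
\|\mathcal{B}_{\mu,\beta,\delta}^{\xi}f\|_{q,\alpha}\lesssim\int_{\mathbb{B}_n}|f(w)|\,\Big\|\widetilde{K}_{\mu,\beta,\delta}^{\xi}(\cdot,w)\Big\|_{q,\alpha}{\rm d}v_\alpha(w).
\]
So it suffices to prove
\[
\sup_{w\in\mathbb{B}_n}\int_{\mathbb{B}_n}\widetilde{K}_{\mu,\beta,\delta}^{\xi}(z,w)^q\,{\rm d}v_\alpha(z)<\infty .
\]
The boundedness of $\mathcal{C}_{\mu,\beta}^{\xi}$ then follows from Lemma \ref{lemmaKR}, as remarked after \eqref{equa3.01}.

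To estimate this, I would apply Minkowski's inequality once more, now in the $t$-integral, since $q\ge1$:
\[
\Big(\int_{\mathbb{B}_n}\widetilde K^q{\rm d}v_\alpha\Big)^{1/q}\le(1-|w|^2)^{\delta}\int_{[0,1)}\Big(\int_{\mathbb{B}_n}\frac{(1-|z|^2)^{\alpha}{\rm d}v(z)}{|1-t\langle z,\xi\rangle|^{\beta q}|1-t\langle z,w\rangle|^{(n+1+\alpha+\delta)q}}\Big)^{1/q}{\rm d}\mu(t).
\]
Here the role of $\delta>0$ is visible: with $c'=(n+1+\alpha+\delta)q$ we have $c'>n+1+\alpha$ strictly, even when $q=1$. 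This is exactly why the statement requires $\delta>0$, and it replaces the auxiliary weight $h_s$ used in Proposition \ref{proposition3.1}.

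Next I would treat the inner integral with Lemma \ref{lemma2.2}, taking $b'=\beta q$, $c'$ as above and weight exponent $\alpha$, splitting into the same three cases as in Proposition \ref{proposition3.1}.
\begin{itemize}
\item If $b'<n+1+\alpha$, item (3) gives the bound $(1-t|w|)^{-(c'-n-1-\alpha)-b'}$. Taking the $1/q$-th power, the exponent becomes $\beta+n+1+\alpha+\delta-(n+1+\alpha)/q=\lambda+\delta$.
\item If $b'>n+1+\alpha$, item (4) together with $|1-t^2\langle w,\xi\rangle|\gtrsim 1-t$ and $\gtrsim 1-t|w|$ bounds it by $(1-t)^{-(b'-n-1-\alpha)}(1-t|w|)^{-c'}$. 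After the $1/q$-th power this is $(1-t)^{-(\beta-(n+1+\alpha)/q)}(1-t|w|)^{-(n+1+\alpha+\delta)}$.
\item If $b'=n+1+\alpha$, I would lower the weight by a small $\varepsilon$, exactly as in Case 3 of Proposition \ref{proposition3.1}, and reduce to the previous case.
\end{itemize}

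Finally, I would use Lemma \ref{lemma2.6} with $\gamma=\delta$ and $s=\lambda$.
\begin{itemize}
\item In the first case ($\rho=0$), this yields $\int_{[0,1)}(1-t|w|)^{-\lambda-\delta}{\rm d}\mu(t)\lesssim(1-|w|)^{-\delta}$.
\item In the second case, take $\rho=\beta-(n+1+\alpha)/q$. Then $s-\rho+\gamma=n+1+\alpha+\delta$, and $\rho<\lambda$ holds because $n+1+\alpha>0$. If $\rho<0$ the factor $(1-t)^{-\rho}$ is bounded, and I would just use the $\rho=0$ version with a larger exponent on $1-t|w|$.
\end{itemize}
In both cases the factor $(1-|w|^2)^{\delta}$ cancels, giving the uniform bound.

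The main obstacle I expect is the bookkeeping in Lemma \ref{lemma2.2}: its case (4) requires $c'>n+1+\alpha$ strictly, which is why $\delta>0$ is needed, and the subcase $\rho<0$ has to be handled so that Lemma \ref{lemma2.6} applies with $\rho\ge0$.
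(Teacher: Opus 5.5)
Your proposal is correct and follows the paper's proof essentially step for step: Minkowski's inequality first in $w$ and then in $t$, the three-case application of Lemma \ref{lemma2.2} with $b'=\beta q$ and $c'=(n+1+\alpha+\delta)q$, and finally Lemma \ref{lemma2.6} with $\gamma=\delta$ and $\rho=0$ or $\rho=\beta-(n+1+\alpha)/q$. The worry about $\rho<0$ is unnecessary: in Case 2 the hypothesis $\beta q>n+1+\alpha$ already forces $\rho>0$, and after the $\varepsilon$-shift in Case 3 one gets $\rho=\varepsilon/q>0$, so that subcase never arises.
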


\begin{proof}
Fix $\delta>0$. For any $f\in L_{\alpha}^1(\mathbb{B}_n)$, by \eqref{equa3.01}, 
\begin{align*}
\mathcal{B}_{\mu,\beta,\delta}^{\xi}f(z)=\frac{c_{\alpha+\delta}}{c_{\alpha}}\int_{\mathbb{B}_n}\left(\int_{[0,1)}\frac{f(w)(1-|w|^2)^{\delta}}{|1-t\langle z,\xi\rangle|^{\beta}\cdot|1-t\langle z,w\rangle|^{n+1+\alpha+\delta}}{\rm d}\mu(t)\right){\rm d}v_{\alpha}(w).
\end{align*}
Then we use Minkowski's inequality to obtain
\begin{equation}\label{equa3.10}
\begin{split}
&\quad\big\|\mathcal{B}_{\mu,\beta,\delta}^{\xi}f\big\|_{q,\alpha}\\
&\lesssim \left[\int_{\mathbb{B}_n}\left(\int_{\mathbb{B}_n}\int_{[0,1)}\frac{|f(w)|(1-|w|^2)^{\delta}{\rm d}\mu(t)}{|1-t\langle z,\xi\rangle|^{\beta}\cdot|1-t\langle z,w\rangle|^{n+1+\alpha+\delta}}{\rm d}v_{\alpha}(w)\right)^q{\rm d}v_{\alpha}(z)\right]^{1/q}\\
&\lesssim \int_{\mathbb{B}_n}|f(w)|(1-|w|^2)^{\delta}\int_{[0,1)}\left(\int_{\mathbb{B}_n}\frac{(1-|z|^2)^{\alpha}{\rm d}v(z)}{|1-t\langle z,\xi\rangle|^{\beta q}|1-t\langle z,w\rangle|^{(n+1+\alpha+\delta)q}}\right)^{1/q}{\rm d}\mu(t){\rm d}v_{\alpha}(w)
\end{split}
\end{equation}
Let $\delta'=\alpha$, $b'=\beta q$ and $c'=(n+1+\alpha+\delta)q$. Then $\delta'>-1$, $b'>0$ and 
\[
    c'-(n+1+\delta')=(n+1+\alpha)(q-1)+\delta q>0.
\]
Now we again discuss three cases: $b'<n+1+\delta'$, $b'>n+1+\delta'$ and $b'=n+1+\delta'$.

$\bullet${\it Case 1. $b'<n+1+\delta'$}. In this case, by item (3) of Lemma \ref{lemma2.2}, we get
\begin{align*}
&\quad \int_{\mathbb{B}_n}\frac{(1-|z|^2)^{\alpha}}{|1-t\langle z,\xi\rangle|^{\beta q}|1-t\langle z,w\rangle|^{(n+1+\alpha+\delta)q}}{\rm d}v(z)\\
&\simeq \frac{1}{\left(1-t^2|w|^2\right)^{c'-(n+1+\delta')}}\cdot\frac{1}{\left|1-t^2\langle w,\xi\rangle\right|^{b'}}\lesssim \frac{1}{(1-t|w|)^{(n+1+\alpha)(q-1)+\delta q+\beta q}}=\frac{1}{(1-t|w|)^{q\lambda+\delta q}}.
\end{align*}
Then by \eqref{equa3.10}, we get
\begin{align*}
\big\|\mathcal{B}_{\mu,\beta,\delta}^{\xi}f\big\|_{q,\alpha}\lesssim \int_{\mathbb{B}_n}|f(w)|(1-|w|^2)^{\delta}\left(\int_{[0,1)}\frac{1}{(1-t|w|)^{\lambda+\delta}}{\rm d}\mu(t)\right){\rm d}v_{\alpha}(w).
\end{align*}
Since $\mu$ is a $\lambda$-Carleson measure on $[0,1)$, Lemma \ref{lemma2.6} gives that
\[
    \int_{[0,1)}\frac{1}{(1-t|w|)^{\lambda+\delta}}{\rm d}\mu(t)\lesssim \frac{1}{(1-|w|^2)^{\delta}}.
\]
It follows that 
\[
    \big\|\mathcal{B}_{\mu,\beta,\delta}^{\xi}f\big\|_{q,\alpha}\lesssim \int_{\mathbb{B}_n}|f(w)|(1-|w|^2)^{\delta}\cdot \frac{1}{(1-|w|^2)^{\delta}}{\rm d}v_{\alpha}(w)=\|f\|_{1,\alpha}.
\]
This shows the boundedness of $\mathcal{B}_{\mu,\beta,\delta}^{\xi}: L_{\alpha}^1(\mathbb{B}_n)\to L_{\alpha}^q(\mathbb{B}_n)$, and hence $\mathcal{C}_{\mu,\beta}^{\xi}$ is bounded from $A_{\alpha}^1(\mathbb{B}_n)$ to $A_{\alpha}^q(\mathbb{B}_n)$.

$\bullet${\it Case 2. $b'>n+1+\delta'$}. In this case, we apply item (4) of Lemma \ref{lemma2.2} and modify the proof of Proposition \ref{proposition3.1} to obtain
\begin{equation}\label{equa3.11}
\int_{\mathbb{B}_n}\frac{(1-|z|^2)^{\alpha}{\rm d}v(z)}{|1-t\langle z,\xi\rangle|^{\beta q}|1-t\langle z,w\rangle|^{(n+1+\alpha+\delta)q}}\lesssim \frac{1}{(1-t)^{\beta q-(n+1+\alpha)}(1-t|w|)^{(n+1+\alpha+\delta)q}}.
\end{equation}
Since $n+1+\alpha+\delta=\lambda-(\beta-\frac{n+1+\alpha}{q})+\delta$, $\delta>0$, and $\mu$ is a $\lambda$-Carleson measure on $[0,1)$, by \eqref{equa3.10}, \eqref{equa3.11} and Lemma \ref{lemma2.6}, we obtain
\begin{align*}
\big\|\mathcal{B}_{\mu,\beta,\delta}^{\xi}f\big\|_{q,\alpha}&\lesssim \int_{\mathbb{B}_n}|f(w)|(1-|w|^2)^{\delta}\left(\int_{[0,1)}\frac{{\rm d}\mu(t)}{(1-t)^{\beta-(n+1+\alpha)/q}(1-t|w|)^{n+1+\alpha+\delta}}\right){\rm d}v_{\alpha}(w)\\
&\lesssim \int_{\mathbb{B}_n}|f(w)|(1-|w|^2)^{\delta}\cdot \frac{1}{(1-|w|^2)^{\delta}}{\rm d}v_{\alpha}(w)=\|f\|_{1,\alpha}.
\end{align*}
This shows that $\mathcal{B}_{\mu,\beta,\delta}^{\xi}: L_{\alpha}^1(\mathbb{B}_n)\to L_{\alpha}^q(\mathbb{B}_n)$ is bounded, and so $\mathcal{C}_{\mu,\beta}^{\xi}: A_{\alpha}^1(\mathbb{B}_n)\to A_{\alpha}^q(\mathbb{B}_n)$ is bounded.

$\bullet${\it Case 3. $b'=n+1+\delta'$}. In this case, similarly to the proof of Proposition \ref{proposition3.1}, we choose $\varepsilon>0$ small enough such that $\varepsilon<1+\alpha$. Then
\begin{align*}
&\quad \int_{\mathbb{B}_n}\frac{(1-|z|^2)^{\alpha}}{|1-t\langle z,\xi\rangle|^{\beta q}|1-t\langle z,w\rangle|^{(n+1+\alpha+\delta)q}}{\rm d}v(z)\\
&\lesssim \int_{\mathbb{B}_n}\frac{(1-|z|^2)^{\alpha-\varepsilon}}{|1-t\langle z,\xi\rangle|^{\beta q}|1-t\langle z,w\rangle|^{(n+1+\alpha+\delta)q-\varepsilon}}{\rm d}v(z).
\end{align*}
Let $\delta''=\alpha-\varepsilon=\delta'-\varepsilon$, $b''=\beta q=b'$ and $c''=(n+1+\alpha+\delta)q-\varepsilon=c'-\varepsilon$. Clearly, $\delta''>-1$ and
\[
    b''-(n+1+\delta'')=b'-(n+1+\delta')+\varepsilon>0,
\]
\[
    c''-(n+1+\delta'')=c'-(n+1+\delta')>0.
\]
Hence, by a similar argument as Case 2, we can also get
\[
    \big\|\mathcal{B}_{\mu,\beta,\delta}^{\xi}f\big\|_{q,\alpha}\lesssim \|f\|_{1,\alpha},\quad \forall f\in L_{\alpha}^1(\mathbb{B}_n).
\]
That is, $\mathcal{B}_{\mu,\beta,\delta}^{\xi}: L_{\alpha}^1(\mathbb{B}_n)\to L_{\alpha}^q(\mathbb{B}_n)$ is bounded, and $\mathcal{C}_{\mu,\beta}^{\xi}: A_{\alpha}^1(\mathbb{B}_n)\to A_{\alpha}^q(\mathbb{B}_n)$ is also bounded. The proof is now complete.
\end{proof}

 We next treat the case $0<p<1$ and $p\leq q<\infty$. We need to 
estimate the operator $\mathcal{C}_{\mu,\beta}^{\xi}$ on the atoms $a_{w,M}^{(p)}$ 
defined in \eqref{atomic-decomposition}. 
The following two lemmas are key steps in proving Theorem \ref{theorem1.1} 
for the case $0<p<1$ and $p\leq q<\infty$.

\begin{lemma}\label{lemma3.3}
Let \(\alpha>-1\), \(\beta>0\), \(0<p<1\), \(p<q\le\infty\), and
\(M>(n+1+\alpha)/p\). Suppose that \(\mu\) is
\(\lambda\)-Carleson, where
\[
    \lambda=\beta+(n+1+\alpha)\left(\frac{1}p-\frac{1}q\right),
    \qquad \frac{1}{\infty}=0.
\]
Then
\[
    \sup_{w\in\mathbb B_n}
    \|\mathcal C_{\mu,\beta}^{\xi}a_{w,M}^{(p)}\|_{q,\alpha}
    \lesssim
    \sup_{0\leq r<1}\frac{\widehat\mu(r)}{(1-r)^\lambda}.
\]
\end{lemma}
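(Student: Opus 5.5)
The plan is to compute $\mathcal C_{\mu,\beta}^{\xi}a_{w,M}^{(p)}$ explicitly, split $\mu$ dyadically, and estimate each dyadic piece with Lemma~\ref{lemma2.2}. Write $N=n+1+\alpha$ and $\|\mu\|_\lambda=\sup_r\hat\mu(r)/(1-r)^\lambda$. Since $a_{w,M}^{(p)}(tz)=(1-|w|^2)^{M-N/p}(1-t\langle z,w\rangle)^{-M}$, we have
\[
|\mathcal C_{\mu,\beta}^{\xi}a_{w,M}^{(p)}(z)|\le (1-|w|^2)^{M-N/p}\sum_{k\ge0}\int_{I_k}\frac{{\rm d}\mu(t)}{|1-t\langle z,\xi\rangle|^{\beta}|1-t\langle z,w\rangle|^{M}}.
\]
For $t\in I_k$ we have $1-t\simeq\delta_k$. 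I will use the elementary comparison $|1-tu|\simeq|1-r_ku|$ for $|u|\le1$ and $t\in I_k$, which follows from $|1-tu|\simeq(1-t)+|1-u|$ up to constants. Together with $\mu(I_k)\le\hat\mu(r_k)\le\|\mu\|_\lambda\delta_k^\lambda$, this gives the pointwise bound
\[
|\mathcal C_{\mu,\beta}^{\xi}a_{w,M}^{(p)}(z)|\lesssim \|\mu\|_\lambda(1-|w|^2)^{M-N/p}\sum_{k\ge0}\frac{\delta_k^\lambda}{|1-r_k\langle z,\xi\rangle|^{\beta}|1-r_k\langle z,w\rangle|^{M}}=:\|\mu\|_\lambda\sum_k g_k(z).
\]

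Next I pass from this pointwise bound to the $A^q_\alpha$ norm. With $\tau=\min\{q,1\}$, the quantity $\|\cdot\|_{q,\alpha}^{\tau}$ is subadditive, so
\[
\|\mathcal C^\xi_{\mu,\beta}a_{w,M}^{(p)}\|_{q,\alpha}^\tau\lesssim\|\mu\|_\lambda^\tau\sum_k\|g_k\|_{q,\alpha}^\tau.
\]
This handles $q<1$, where Minkowski's integral inequality is unavailable, and it is the reason for the dyadic splitting. For $q<\infty$, each $\|g_k\|_{q,\alpha}^q$ is an integral of the type $J_{b,c,\alpha}$ in Lemma~\ref{lemma2.2}, with $b=\beta q$, $c=Mq$, and the points $r_k\xi$ and $r_kw$. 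Here $c=Mq>Nq/p>N$, so either item (3) or item (4) applies, with the $\varepsilon$-perturbation trick of Proposition~\ref{proposition3.1} covering the borderline $\beta q=N$. This yields
\[
\|g_k\|_{q,\alpha}\lesssim \delta_k^\lambda\Big[(1-r_k|w|)^{-(M-N/q)}|1-r_k^2\langle w,\xi\rangle|^{-\beta}+\delta_k^{-(\beta-N/q)_+}(1-r_k|w|)^{-M}\Big],
\]
up to a logarithm in the equality cases, which is absorbed by choosing slightly perturbed exponents. For $q=\infty$ I bound $g_k$ directly by its supremum, using $|1-r_k\langle z,\xi\rangle|\ge\delta_k$ and $|1-r_k\langle z,w\rangle|\ge 1-r_k|w|$.

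The main obstacle is summing over $k$ uniformly in $w$. I will use $|1-r_k^2\langle w,\xi\rangle|\gtrsim\delta_k$ and $1-r_k|w|\simeq\max\{\delta_k,1-|w|\}$, and split the $k$-sum at $\delta_k\approx1-|w|$. For $\delta_k\ge1-|w|$, a typical term is $(1-|w|^2)^{M-N/p}\delta_k^{\lambda-\beta-M+N/q}=(1-|w|^2)^{M-N/p}\delta_k^{N/p-M}$. Since $M>N/p$, this is a geometric series dominated by its smallest $\delta_k$, namely $\delta_k\approx1-|w|$, so it is $O(1)$. For $\delta_k<1-|w|$ the terms are $(1-|w|^2)^{M-N/p-M+N/q}\delta_k^{\lambda-\beta}$ (or the analogous expression from the second bracket term), and $\lambda-\beta=N(1/p-1/q)>0$ because $q>p$. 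This is again geometric and dominated by $\delta_k\approx1-|w|$, giving $O(1)$. The second bracket term is handled in the same way; there the required positivity comes from $\lambda-(\beta-N/q)=N/p>0$. Summing with the power $\tau$ preserves geometric decay, so the total is $\lesssim\|\mu\|_\lambda$ uniformly in $w$. The care needed is in tracking exponent signs across the cases $\beta q\lessgtr N$, and this is exactly where the hypotheses $M>N/p$ and $q>p$ are used.
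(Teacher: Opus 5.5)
Your proof is correct, but it runs in the opposite order from the paper's. The paper also splits $\mu$ over the $I_k$ and uses $|1-ta|\simeq(1-t)+|1-a|$. It then discards the $\xi$-factor at once via $(\delta_k+|1-\langle z,\xi\rangle|)^\beta\ge\delta_k^\beta$ and sums the geometric series pointwise in $z$, splitting at $\delta_k\approx|1-\langle z,w\rangle|$ and using $0<\lambda-\beta<M$. This produces the single majorant $|\mathcal C_{\mu,\beta}^{\xi}a_{w,M}^{(p)}(z)|\lesssim\|\mu\|_\lambda(1-|w|^2)^{M-N/p}|1-\langle z,w\rangle|^{-(M-N/p+N/q)}$ (with your $N=n+1+\alpha$). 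One application of Lemma~\ref{lemma2.1}, or $1-|w|^2\le2|1-\langle z,w\rangle|$ when $q=\infty$, then finishes, with no quasi-triangle inequality and no case split in $\beta q$ versus $N$.

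You instead integrate each dyadic piece and sum the norms via $\tau$-subadditivity, splitting at $\delta_k\approx1-|w|$. The geometric-series bookkeeping is the same, just done after integration rather than before. Your detour through Lemma~\ref{lemma2.2} buys nothing, though: when you sum the first bracket you immediately throw away the two-point information through $|1-r_k^2\langle w,\xi\rangle|\gtrsim\delta_k$. You could use $|1-r_k\langle z,\xi\rangle|\ge\delta_k$ before integrating, exactly as you do for $q=\infty$. Lemma~\ref{lemma2.1} alone then gives $\|g_k\|_{q,\alpha}\lesssim(1-|w|^2)^{M-N/p}\delta_k^{N/p-N/q}(\delta_k+1-|w|)^{-(M-N/q)}$. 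This is precisely your first-bracket summation, and the second bracket term, the $\varepsilon$-perturbation at $\beta q=N$, and the case analysis all disappear.

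If you keep the Lemma~\ref{lemma2.2} version, one correction is needed: the positive part in $\delta_k^{-(\beta-N/q)_+}$ is harmful. When $\beta q<N$ it turns the second term into $(1-|w|^2)^{M-N/p}\delta_k^{\lambda}(1-r_k|w|)^{-M}$. Summing this over $k$ gives something of size $(1-|w|)^{\beta-N/q}$, which is unbounded as $|w|\to1$. Your justification via $\lambda-(\beta-N/q)=N/p>0$ applies only to the exponent $-(\beta-N/q)$ without the positive part. Either note that item (3) of Lemma~\ref{lemma2.2} produces no second term when $\beta q<N$, or drop the positive part. The latter remains a valid upper bound, since it only adds a nonnegative term, and it sums to $O(1)$.
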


\begin{proof}
Fix \(z,w\in\mathbb B_n\). For \(k\ge0\), recall that
\(\delta_k=2^{-k}\), \(r_k=1-\delta_k\), and
\(I_k=[r_k,r_{k+1})\). If \(t\in I_k\), then
\(\delta_{k+1}<1-t\le\delta_k\), and hence
\(1-t\simeq\delta_k\). Applying \(|1-ta|\simeq(1-t)+|1-a|\) first with
\(a=\langle z,w\rangle\) and then with \(a=\langle z,\xi\rangle\), we obtain
\(|1-t\langle z,w\rangle|\simeq \delta_k+|1-\langle z,w\rangle|\) and
\(|1-t\langle z,\xi\rangle|\simeq \delta_k+|1-\langle z,\xi\rangle|\).
Using the definition of the atom and decomposing the measure integral
with respect to the intervals \(I_k\), we have
\begin{align*}
  |\mathcal C_{\mu,\beta}^{\xi}a_{w,M}^{(p)}(z)|
  &\le
  (1-|w|^2)^{M-(n+1+\alpha)/p}
  \sum_{k\ge0}
  \int_{I_k}
  \frac{{\rm d}\mu(t)}{|1-t\langle z,w\rangle|^M
   |1-t\langle z,\xi\rangle|^\beta}\\
  &\lesssim
  (1-|w|^2)^{M-(n+1+\alpha)/p}
  \sum_{k\ge0}
  \frac{\mu(I_k)}{(\delta_k+|1-\langle z,w\rangle|)^M
   (\delta_k+|1-\langle z,\xi\rangle|)^\beta}.
\end{align*}
Since \(I_k\subset[r_k,1)\), the \(\lambda\)-Carleson condition gives
\[
  \mu(I_k)
  \le \widehat\mu(r_k)
  \le
  \left(
    \sup_{0\le r<1}
    \frac{\widehat\mu(r)}{(1-r)^\lambda}
  \right)(1-r_k)^\lambda
  =
  \left(
    \sup_{0\le r<1}
    \frac{\widehat\mu(r)}{(1-r)^\lambda}
  \right)\delta_k^\lambda.
\]
Consequently,
\[
\begin{aligned}
  |\mathcal C_{\mu,\beta}^{\xi}a_{w,M}^{(p)}(z)|
  &\lesssim
  \left(
    \sup_{0\le r<1}
    \frac{\widehat\mu(r)}{(1-r)^\lambda}
  \right)
  (1-|w|^2)^{M-(n+1+\alpha)/p} 
  \sum_{k\ge0}
  \frac{\delta_k^\lambda}{(\delta_k+|1-\langle z,w\rangle|)^M
   (\delta_k+|1-\langle z,\xi\rangle|)^\beta}.
\end{aligned}
\]
Because
\((\delta_k+|1-\langle z,\xi\rangle|)^\beta\ge\delta_k^\beta\),
it remains to estimate
\[
  \sum_{k\ge0}
  \frac{\delta_k^{\lambda-\beta}}{(\delta_k+|1-\langle z,w\rangle|)^M}.
\]
Here
\[
  \lambda-\beta
  =(n+1+\alpha)\left(\frac{1}p-\frac{1}q\right)>0,
\]
and
\[
  M>\frac{n+1+\alpha}{p}\ge\lambda-\beta.
\]
Assume first that \(0<|1-\langle z,w\rangle|\le1\). Choose the unique
integer \(k_0\ge0\) such that
\[
  \delta_{k_0+1}
  <|1-\langle z,w\rangle|
  \le\delta_{k_0}.
\]
Since \(\delta_{k_0+1}=\delta_{k_0}/2\), this choice implies
\(
 \delta_{k_0}\simeq|1-\langle z,w\rangle|.
\)
For \(0\le k\le k_0\), we have
\(\delta_k\ge\delta_{k_0}\ge|1-\langle z,w\rangle|\), and therefore
\(
 \delta_k+|1-\langle z,w\rangle|\simeq\delta_k.
\)
It follows that
\[
  \sum_{k=0}^{k_0}
  \frac{\delta_k^{\lambda-\beta}}{(\delta_k+|1-\langle z,w\rangle|)^M}
  \lesssim \sum_{k=0}^{k_0}\delta_k^{\lambda-\beta-M}
  \lesssim \delta_{k_0}^{\lambda-\beta-M}
  \lesssim |1-\langle z,w\rangle|^{\lambda-\beta-M}.
\]
For \(k>k_0\), we have
\(\delta_k<|1-\langle z,w\rangle|\). Hence
\[
  \sum_{k>k_0}
  \frac{\delta_k^{\lambda-\beta}}{(\delta_k+|1-\langle z,w\rangle|)^M}
  \le \frac{1}{|1-\langle z,w\rangle|^M}\sum_{k>k_0}\delta_k^{\lambda-\beta}
  \lesssim \frac{\delta_{k_0+1}^{\lambda-\beta}}{|1-\langle z,w\rangle|^M}
  \lesssim |1-\langle z,w\rangle|^{\lambda-\beta-M}.
\]
Combining the two ranges gives
\[
  \sum_{k\ge0}
  \frac{\delta_k^{\lambda-\beta}}{(\delta_k+|1-\langle z,w\rangle|)^M}
  \lesssim
  |1-\langle z,w\rangle|^{\lambda-\beta-M}.
\]

If \(|1-\langle z,w\rangle|>1\), then
\(\delta_k+|1-\langle z,w\rangle|
 \ge |1-\langle z,w\rangle|\) for every \(k\), and therefore
\[
  \sum_{k\ge0}
  \frac{\delta_k^{\lambda-\beta}}{(\delta_k+|1-\langle z,w\rangle|)^M}
  \le \frac{1}{|1-\langle z,w\rangle|^M}\sum_{k\ge0}\delta_k^{\lambda-\beta}
  \lesssim |1-\langle z,w\rangle|^{\lambda-\beta-M}.
\]
Thus, in all cases,
\[
  |\mathcal C_{\mu,\beta}^{\xi}a_{w,M}^{(p)}(z)|
  \lesssim
  \left(
    \sup_{0\le r<1}
    \frac{\widehat\mu(r)}{(1-r)^\lambda}
  \right)
  \frac{(1-|w|^2)^{M-(n+1+\alpha)/p}}{|1-\langle z,w\rangle|^{M-\lambda+\beta}}.
\]
If \(q<\infty\), Lemma~\ref{lemma2.1} gives
\[
  \int_{\mathbb B_n}
  \left(
    \frac{(1-|w|^2)^{M-(n+1+\alpha)/p}}{|1-\langle z,w\rangle|^{M-\lambda+\beta}}
  \right)^q
  {\rm d}v_\alpha(z)\lesssim1,
\]
and therefore
\[
  \|\mathcal C_{\mu,\beta}^{\xi}a_{w,M}^{(p)}\|_{q,\alpha}
  \lesssim
  \sup_{0\le r<1}\frac{\widehat\mu(r)}{(1-r)^\lambda}.
\]
If \(q=\infty\), then \(\lambda-\beta=(n+1+\alpha)/p\), and
\(1-|w|^2\le2|1-\langle z,w\rangle|\). Hence the preceding pointwise estimate gives
\[
  |\mathcal C_{\mu,\beta}^{\xi}a_{w,M}^{(p)}(z)|
  \lesssim
  \left(\sup_{0\le r<1}\frac{\widehat\mu(r)}{(1-r)^\lambda}\right)
  \frac{(1-|w|^2)^{M-(n+1+\alpha)/p}}{|1-\langle z,w\rangle|^{M-(n+1+\alpha)/p}}
  \lesssim
  \sup_{0\le r<1}\frac{\widehat\mu(r)}{(1-r)^\lambda}.
\]
\end{proof}

We next consider the diagonal case $0<p=q<1$.

\begin{lemma}\label{lemma3.4}
Let \(\alpha>-1\), \(\beta>0\), \(0<p=q<1\), choose
\(M>(n+1+\alpha)/p\), and suppose that \(\mu\)
is \(\beta\)-Carleson. Then
\[
    \sup_{w\in\mathbb B_n}
    \|\mathcal C_{\mu,\beta}^{\xi}a_{w,M}^{(p)}\|_{p,\alpha}
    \lesssim
    \sup_{0\leq r<1}\frac{\widehat\mu(r)}{(1-r)^\beta}.
\]
\end{lemma}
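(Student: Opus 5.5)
The plan is to follow the scheme of Lemma~\ref{lemma3.3}: get a pointwise bound for $\mathcal C_{\mu,\beta}^{\xi}a_{w,M}^{(p)}(z)$ and then integrate it with a Forelli--Rudin estimate. The difference here is that $\lambda=\beta$, so $\lambda-\beta=0$. After discarding $(\delta_k+|1-\langle z,\xi\rangle|)^\beta\ge\delta_k^\beta$, the sum $\sum_k(\delta_k+|1-\langle z,w\rangle|)^{-M}$ diverges, because infinitely many terms are of size $|1-\langle z,w\rangle|^{-M}$. So the factor involving $\xi$ must be kept, and it has to supply the summability.

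Write $A=|1-\langle z,w\rangle|$, $B=|1-\langle z,\xi\rangle|$ and $S=\sup_r\widehat\mu(r)/(1-r)^\beta$. Exactly as in the proof of Lemma~\ref{lemma3.3}, using $|1-ta|\simeq(1-t)+|1-a|$ and $\mu(I_k)\le S\delta_k^\beta$, we get
\[
|\mathcal C_{\mu,\beta}^{\xi}a_{w,M}^{(p)}(z)|\lesssim S\,(1-|w|^2)^{M-(n+1+\alpha)/p}\sum_{k\ge0}\frac{1}{(\delta_k+A)^M}\Big(\frac{\delta_k}{\delta_k+B}\Big)^\beta .
\]
Split the sum as before.
\begin{itemize}
\item For $\delta_k\ge A$, the terms are at most $\delta_k^{-M}$, so this part contributes $\lesssim A^{-M}$.
\item For $\delta_k<A$, each term is at most $A^{-M}\min\{1,(\delta_k/B)^\beta\}$. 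Summing over the dyadic $\delta_k<A$: if $A\le B$ this part is $\lesssim A^{-M}(A/B)^\beta\le A^{-M}$. If $A>B$, there are about $\log_2(A/B)$ indices with $B\le\delta_k<A$, and the geometric tail below $B$ is $O(1)$, so this part is $\lesssim A^{-M}(1+\log(A/B))$.
\end{itemize}
In both cases, for a fixed small $\varepsilon>0$ with $p(M-\varepsilon)>n+1+\alpha$ (possible since $M>(n+1+\alpha)/p$), we get $1+\log^+(A/B)\lesssim_\varepsilon (A/B)^\varepsilon$ when $A>B$. Hence
\[
|\mathcal C_{\mu,\beta}^{\xi}a_{w,M}^{(p)}(z)|\lesssim S\,\frac{(1-|w|^2)^{M-(n+1+\alpha)/p}}{|1-\langle z,w\rangle|^{M}}\Big(1+\frac{|1-\langle z,w\rangle|^{\varepsilon}}{|1-\langle z,\xi\rangle|^{\varepsilon}}\Big).
\]

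Raise this to the power $p$ and integrate against ${\rm d}v_\alpha(z)$.
\begin{itemize}
\item The first term integrates to $O(1)$ uniformly in $w$ by Lemma~\ref{lemma2.1}(3), since $pM>n+1+\alpha$.
\item The second term is $(1-|w|^2)^{pM-(n+1+\alpha)}\,|1-\langle z,w\rangle|^{-p(M-\varepsilon)}\,|1-\langle z,\xi\rangle|^{-p\varepsilon}$. Apply Lemma~\ref{lemma2.2}(3) with $\delta=\alpha$, $b=p(M-\varepsilon)>n+1+\alpha>c=p\varepsilon$; it yields
\[
\frac{(1-|w|^2)^{pM-(n+1+\alpha)}}{(1-|w|^2)^{p(M-\varepsilon)-(n+1+\alpha)}|1-\langle w,\xi\rangle|^{p\varepsilon}}=\Big(\frac{1-|w|^2}{|1-\langle w,\xi\rangle|}\Big)^{p\varepsilon}\le 2^{p\varepsilon},
\]
where the last step uses $1-|w|^2\le 2|1-\langle w,\xi\rangle|$.
\end{itemize}
This gives $\|\mathcal C_{\mu,\beta}^{\xi}a_{w,M}^{(p)}\|_{p,\alpha}\lesssim S$ uniformly in $w$.

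The main obstacle is the logarithmic divergence at the endpoint $\lambda=\beta$. The key point is to absorb it into a small power $(A/B)^\varepsilon$ rather than into a bound in terms of $B$ alone. A bound like $\log(2/B)$ would, after applying Lemma~\ref{lemma2.2}, leave an uncompensated factor $|1-\langle w,\xi\rangle|^{-p\varepsilon}$ that is not uniform in $w$. Keeping $A^\varepsilon$ instead shifts the exponent on $(1-|w|^2)$ by exactly the amount needed to cancel that factor.
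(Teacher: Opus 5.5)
Your proposal is correct and follows the paper's proof essentially step for step. You use the same dyadic splitting to get $|1-\langle z,w\rangle|^{-M}\bigl(1+\log^+\frac{|1-\langle z,w\rangle|}{|1-\langle z,\xi\rangle|}\bigr)$, absorb the logarithm into the same power $(A/B)^{\varepsilon}$, and use $p$-subadditivity to reduce to two Forelli--Rudin integrals.

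The only difference is the mixed term. The paper uses $|1-\langle z,\xi\rangle|\ge(1-|z|^2)/2$ and Lemma~\ref{lemma2.1} with weight exponent $\alpha-p\varepsilon$, taking $\varepsilon<(\alpha+1)/p$. You instead apply Lemma~\ref{lemma2.2}(3) with the boundary point $\xi$ in the second slot. That works, but the lemma is stated for interior points, so you should justify it by applying it at $r\xi$ and letting $r\to1^-$ with Fatou's lemma (the constants are uniform in $r$).
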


\begin{proof}
Fix \(z,w\in\mathbb B_n\). Decomposing the measure integral over the intervals
\(I_k\), using \(1-t\simeq\delta_k\) for \(t\in I_k\), and applying the
\(\beta\)-Carleson condition, we obtain
\[
\begin{aligned}
  |\mathcal C_{\mu,\beta}^{\xi}a_{w,M}^{(p)}(z)|
  &\lesssim
  \left(\sup_{0\le r<1}\frac{\widehat\mu(r)}{(1-r)^\beta}\right)
  (1-|w|^2)^{M-(n+1+\alpha)/p}
  \sum_{k\ge0}
  \frac{\delta_k^\beta}{(\delta_k+|1-\langle z,w\rangle|)^M
        (\delta_k+|1-\langle z,\xi\rangle|)^\beta}.
\end{aligned}
\]

\noindent If \(|1-\langle z,\xi\rangle|\ge |1-\langle z,w\rangle|\), we split the sum into
\(\delta_k\ge |1-\langle z,w\rangle|\) and \(\delta_k<|1-\langle z,w\rangle|\). Then
\[
\begin{aligned}
  &\sum_{k\ge0}
  \frac{\delta_k^\beta}{(\delta_k+|1-\langle z,w\rangle|)^M
        (\delta_k+|1-\langle z,\xi\rangle|)^\beta} \\
  &\quad\lesssim
  \sum_{\delta_k\ge |1-\langle z,w\rangle|}\delta_k^{-M}
  +\frac{1}{|1-\langle z,w\rangle|^M|1-\langle z,\xi\rangle|^\beta}
  \sum_{\delta_k<|1-\langle z,w\rangle|}\delta_k^\beta
  \lesssim |1-\langle z,w\rangle|^{-M}.
\end{aligned}
\]

\noindent If \(|1-\langle z,\xi\rangle|<|1-\langle z,w\rangle|\), we split the sum into the three ranges
\(\delta_k\ge |1-\langle z,w\rangle|\),
\(|1-\langle z,\xi\rangle|\le\delta_k<|1-\langle z,w\rangle|\), and
\(\delta_k<|1-\langle z,\xi\rangle|\). Thus,
\[
\begin{aligned}
  &\sum_{k\ge0}
  \frac{\delta_k^\beta}{(\delta_k+|1-\langle z,w\rangle|)^M
        (\delta_k+|1-\langle z,\xi\rangle|)^\beta} \\
  &\quad\lesssim
  \sum_{\delta_k\ge |1-\langle z,w\rangle|}\delta_k^{-M}
  +\frac{1}{|1-\langle z,w\rangle|^M}
   \left(1+\log\frac{|1-\langle z,w\rangle|}{|1-\langle z,\xi\rangle|}\right)
  +\frac{1}{|1-\langle z,w\rangle|^M|1-\langle z,\xi\rangle|^\beta}
   \sum_{\delta_k<|1-\langle z,\xi\rangle|}\delta_k^\beta \\
  &\quad\lesssim
  |1-\langle z,w\rangle|^{-M}
  \left(1+\log\frac{|1-\langle z,w\rangle|}{|1-\langle z,\xi\rangle|}\right).
\end{aligned}
\]
Hence, in both cases,
\[
  \sum_{k\ge0}
  \frac{\delta_k^\beta}{(\delta_k+|1-\langle z,w\rangle|)^M
        (\delta_k+|1-\langle z,\xi\rangle|)^\beta}
  \lesssim
  \frac{1}{|1-\langle z,w\rangle|^M}
  \left(1+\log^+
  \frac{|1-\langle z,w\rangle|}{|1-\langle z,\xi\rangle|}\right).
\]

Choose \(0<\varepsilon<(\alpha+1)/p\). Since
\(1+\log^+x\lesssim_\varepsilon1+x^\varepsilon\) for \(x>0\),
\[
\begin{aligned}
  |\mathcal C_{\mu,\beta}^{\xi}a_{w,M}^{(p)}(z)|
  &\lesssim
  \left(\sup_{0\le r<1}\frac{\widehat\mu(r)}{(1-r)^\beta}\right)
  (1-|w|^2)^{M-(n+1+\alpha)/p}\\
  &\qquad\times
  \left[
    \frac{1}{|1-\langle z,w\rangle|^M}
    +\frac{1}{|1-\langle z,w\rangle|^{M-\varepsilon}
             |1-\langle z,\xi\rangle|^\varepsilon}
  \right].
\end{aligned}
\]
Because \(0<p<1\), \((x+y)^p\le x^p+y^p\). 
Applying Lemma~2.1 with exponent $pM$, we obtain
\[
  (1-|w|^2)^{pM-(n+1+\alpha)}
  \int_{\mathbb B_n}
  \frac{(1-|z|^2)^\alpha}{|1-\langle z,w\rangle|^{pM}}\,{\rm d}v(z)
  \lesssim1.
\]
Moreover, \(|1-\langle z,\xi\rangle|\ge 1-|\langle z,\xi\rangle|\ge 1-|z|=\frac{1-|z|^2}{1+|z|}\ge\frac{1-|z|^2}{2}\). Hence
\[
  (1-|w|^2)^{pM-(n+1+\alpha)}
  \int_{\mathbb B_n}
  \frac{(1-|z|^2)^{\alpha-p\varepsilon}}{|1-\langle z,w\rangle|^{p(M-\varepsilon)}}\,{\rm d}v(z)
  \lesssim1
\]
by Lemma~\ref{lemma2.1}, since
\(\alpha-p\varepsilon>-1\) and
\(p(M-\varepsilon)>n+1+\alpha-p\varepsilon\). Therefore
\[
  \|\mathcal C_{\mu,\beta}^{\xi}a_{w,M}^{(p)}\|_{p,\alpha}
  \lesssim
  \sup_{0\le r<1}\frac{\widehat\mu(r)}{(1-r)^\beta},
\]
uniformly in \(w\).
\end{proof}

Next we are ready to prove the
sufficient condition for the boundedness of $\mathcal C_{\mu,\beta}^{\xi}$ in the range $0<p<1$ and $p\le q\le\infty$.

\begin{proposition}\label{proposition3.5}
Let \(\alpha>-1\), \(\beta>0\), \(0<p<1\), and \(p\le q\le\infty\). Set
\[
    \lambda=\beta+(n+1+\alpha)\left(\frac{1}p-\frac{1}q\right),
    \qquad \frac{1}{\infty}=0.
\]
If \(\mu\) is a \(\lambda\)-Carleson measure, then
\[
  \|\mathcal C_{\mu,\beta}^{\xi}f\|_{q,\alpha}
  \lesssim
  \left(
    \sup_{0\le r<1}
    \frac{\widehat\mu(r)}{(1-r)^\lambda}
  \right)
  \|f\|_{p,\alpha},
  \qquad f\in A_\alpha^p.
\]
In particular,
\(\mathcal C_{\mu,\beta}^{\xi}:A_\alpha^p(\mathbb B_n)\to
A_\alpha^q(\mathbb B_n)\) is bounded.
\end{proposition}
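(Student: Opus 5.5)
The plan is to combine the atomic decomposition of Proposition~\ref{proposition2.4} with the uniform atom estimates of Lemmas~\ref{lemma3.3} and~\ref{lemma3.4}. Fix $M$ large, say $M>n\max\{1,1/p\}+(\alpha+1)/p$, which in particular gives $M>(n+1+\alpha)/p$. Write $f=\sum_j c_j a_{w_j,M}^{(p)}$ with $\|\{c_j\}\|_{\ell^p}\lesssim\|f\|_{p,\alpha}$. Put $S=\sup_r\widehat\mu(r)/(1-r)^\lambda$. When $q>p$, Lemma~\ref{lemma3.3} gives $\|\mathcal C_{\mu,\beta}^{\xi}a_{w_j,M}^{(p)}\|_{q,\alpha}\lesssim S$ uniformly in $j$. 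When $q=p$, Lemma~\ref{lemma3.4} gives the same bound, since then $\lambda=\beta$.

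The first step is to justify the termwise formula $\mathcal C_{\mu,\beta}^{\xi}f(z)=\sum_j c_j\,\mathcal C_{\mu,\beta}^{\xi}a_{w_j,M}^{(p)}(z)$ pointwise. For fixed $z$, the dilates $tz$ with $t\in[0,1)$ stay in the compact set $\{|u|\le|z|\}$. On that set the series converges uniformly, and indeed absolutely: $|a_{w,M}^{(p)}(u)|\le C_{|z|}(1-|w|^2)^{M-(n+1+\alpha)/p}\le C_{|z|}$ and $\sum_j|c_j|\le\|c\|_{\ell^1}\le\|c\|_{\ell^p}$ because $p<1$. The $\lambda$-Carleson condition gives $\mu$ finite, and $|1-t\langle z,\xi\rangle|^{-\beta}\le(1-|z|)^{-\beta}$. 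So dominated convergence, or Fubini, lets us interchange the sum with $\int\,{\rm d}\mu(t)$.

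The second step is to sum the atom estimates, splitting on the size of $q$.
\begin{itemize}
\item If $q\le1$, use $|\sum a_j|^q\le\sum|a_j|^q$. This gives $\|\mathcal C f\|_{q,\alpha}^q\le\sum_j|c_j|^q S^q$, and since $q\ge p$ we have $\sum_j|c_j|^q\le(\sum_j|c_j|^p)^{q/p}$.
\item If $1<q\le\infty$, use Minkowski's inequality in $L^q_\alpha$, or the sup norm when $q=\infty$. This gives $\|\mathcal C f\|_{q,\alpha}\le\sum_j|c_j|\,S\le\|c\|_{\ell^p}S$, again because $p<1$.
\end{itemize}
For $q=\infty$ we take $q=\infty$ in Lemma~\ref{lemma3.3}, whose bound is stated uniformly. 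In every case $\|\mathcal C_{\mu,\beta}^{\xi}f\|_{q,\alpha}\lesssim S\|f\|_{p,\alpha}$. Holomorphy of $\mathcal C f$ is immediate from differentiating under the integral.

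The main obstacle is the interchange in the first step, together with making sure the Fatou/Minkowski summation is applied to the pointwise series rather than to a formal identity. The pointwise bound from the first step handles this, because absolute convergence holds pointwise. Everything else is already contained in the lemmas.
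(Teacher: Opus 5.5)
Your proposal is correct and follows essentially the paper's proof. It uses the same ingredients: the atomic decomposition from Proposition~\ref{proposition2.4}, the uniform atom bounds of Lemmas~\ref{lemma3.3} and~\ref{lemma3.4}, the termwise interchange justified by $\ell^p\subset\ell^1$, finiteness of $\mu$ and uniform bounds for $|z|\le R$, and summation via $q$-subadditivity ($q\le 1$) or the triangle inequality ($q\ge 1$).

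The only difference is technical. The paper first shows that $\sum_j c_j\mathcal C_{\mu,\beta}^{\xi}a_{w_j,M}^{(p)}$ is Cauchy in $A_\alpha^q$, then identifies its limit with $\mathcal C_{\mu,\beta}^{\xi}f$ through continuity of point evaluations. You instead dominate $|\mathcal C_{\mu,\beta}^{\xi}f|$ pointwise by the absolutely convergent series $\sum_j|c_j||\mathcal C_{\mu,\beta}^{\xi}a_{w_j,M}^{(p)}|$ and integrate using monotone convergence. This is equally valid, given the holomorphy of $\mathcal C_{\mu,\beta}^{\xi}f$, which you note.
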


\begin{proof}
Choose \(M>(n+1+\alpha)/p\). Lemma~\ref{lemma3.3}
for \(p<q\le\infty\) and Lemma~\ref{lemma3.4} with
\(\lambda=\beta\) for \(p=q<1\) give
\[
  \sup_{w\in\mathbb B_n}
  \|\mathcal C_{\mu,\beta}^{\xi}a_{w,M}^{(p)}\|_{q,\alpha}
  \lesssim
  \sup_{0\le r<1}
  \frac{\widehat\mu(r)}{(1-r)^\lambda}.
\]
Choose an atomic representation from
Proposition~\ref{proposition2.4} such that
\[
  f=\sum_{j=1}^\infty c_ja_{w_j,M}^{(p)},
  \qquad
  \sum_{j=1}^\infty|c_j|^p
  \lesssim\|f\|_{p,\alpha}^p.
\]
We first prove that
\[
  \sum_{j=1}^\infty c_j\mathcal C_{\mu,\beta}^{\xi}a_{w_j,M}^{(p)}
  \quad\text{converges in }A_\alpha^q.
\]

Suppose \(q<1\). Since \(p\le q\), the inclusion
\(\ell^p\subset\ell^q\) holds and
\(
 \sum_j|c_j|^q
 \le(\sum_j|c_j|^p)^{q/p}.
\)
For integers \(L>N\), the \(q\)-subadditivity of the integral
quasi-norm gives
\[
  \left\|\sum_{j=N+1}^Lc_j\mathcal C_{\mu,\beta}^{\xi}a_{w_j,M}^{(p)}\right\|_{q,\alpha}^q
  \le \sum_{j=N+1}^L|c_j|^q\|\mathcal C_{\mu,\beta}^{\xi}a_{w_j,M}^{(p)}\|_{q,\alpha}^q
  \lesssim \left(\sup_{0\le r<1}\frac{\widehat\mu(r)}{(1-r)^\lambda}\right)^q\sum_{j=N+1}^L|c_j|^q.
\]
Hence
\[
  \lim_{N\to\infty}\sup_{L>N}\left\|\sum_{j=N+1}^Lc_j\mathcal C_{\mu,\beta}^{\xi}a_{w_j,M}^{(p)}\right\|_{q,\alpha}=0.
\]

Suppose \(1\le q\le\infty\). Since \(0<p<1\),
\[
  \sum_{j=1}^\infty|c_j|
  \le
  \left(\sum_{j=1}^\infty|c_j|^p\right)^{1/p},
\]
which follows from
\((\sum_j|c_j|)^p\le\sum_j|c_j|^p\). By the triangle inequality in
\(A_\alpha^q\),
\[
  \left\|\sum_{j=N+1}^Lc_j\mathcal C_{\mu,\beta}^{\xi}a_{w_j,M}^{(p)}\right\|_{q,\alpha}
  \le \sum_{j=N+1}^L|c_j|\|\mathcal C_{\mu,\beta}^{\xi}a_{w_j,M}^{(p)}\|_{q,\alpha}
  \lesssim \left(\sup_{0\le r<1}\frac{\widehat\mu(r)}{(1-r)^\lambda}\right)\sum_{j=N+1}^L|c_j|.
\]
Hence
\[
  \lim_{N\to\infty}\sup_{L>N}\left\|\sum_{j=N+1}^Lc_j\mathcal C_{\mu,\beta}^{\xi}a_{w_j,M}^{(p)}\right\|_{q,\alpha}=0.
\]
Therefore, by completeness of \(A_\alpha^q\), there exists \(g\in A_\alpha^q\) such that
\[
  g=\sum_{j=1}^\infty c_j\mathcal C_{\mu,\beta}^{\xi}a_{w_j,M}^{(p)}
  \quad\text{in }A_\alpha^q.
\]

Setting \(N=0\) and passing to \(L\to\infty\) gives
\[
  \|g\|_{q,\alpha}
  \lesssim
  \left(
    \sup_{0\le r<1}
    \frac{\widehat\mu(r)}{(1-r)^\lambda}
  \right)
  \left(\sum_j|c_j|^p\right)^{1/p}
  \lesssim
  \left(
    \sup_{0\le r<1}
    \frac{\widehat\mu(r)}{(1-r)^\lambda}
  \right)
  \|f\|_{p,\alpha}.
\]
It remains to verify that \(g=\mathcal C_{\mu,\beta}^{\xi}f\). Since \(\mu\) is a
\(\lambda\)-Carleson measure, 
\[
  \mu([0,1))=\widehat\mu(0)\le \sup_{0\le r<1}\frac{\widehat\mu(r)}{(1-r)^\lambda}<\infty.
\]

Fix \(0<R<1\) and \(|z|\le R\). For every \(0\le t<1\),
\(|tz|\le R\), and hence
\[
  |1-\langle tz,w_j\rangle|
  \ge1-|tz||w_j|
  \ge1-R.
\]
Since \(M-(n+1+\alpha)/p>0\),
\((1-|w_j|^2)^{M-(n+1+\alpha)/p}\le1\), and therefore
\[
  |a_{w_j,M}^{(p)}(tz)|\le(1-R)^{-M}.
\]
Also,
\[
  |1-t\langle z,\xi\rangle|\ge1-t|z|\ge1-R.
\]
Since \(\ell^p\subset\ell^1\) and \(\mu\) is finite,
\[
  \sum_{j=1}^\infty\int_{[0,1)}\frac{|c_j|\,|a_{w_j,M}^{(p)}(tz)|}{|1-t\langle z,\xi\rangle|^\beta}\,{\rm d}\mu(t)
  \le \mu([0,1))(1-R)^{-M-\beta}\sum_{j=1}^\infty|c_j|<\infty.
\]
Moreover,
\[
  \sup_{|z|\le R}\sum_{j=1}^\infty\int_{[0,1)}\frac{|c_j|\,|a_{w_j,M}^{(p)}(tz)|}{|1-t\langle z,\xi\rangle|^\beta}\,{\rm d}\mu(t)<\infty.
\]
Thus Tonelli's theorem and uniform absolute convergence allow us to interchange
\(\sum_j\) and \(\int_{[0,1)}\):
\[
  \mathcal C_{\mu,\beta}^{\xi}f(z)=\int_{[0,1)}\frac{\sum_jc_ja_{w_j,M}^{(p)}(tz)}{(1-t\langle z,\xi\rangle)^\beta}\,{\rm d}\mu(t)
  =\sum_jc_j\mathcal C_{\mu,\beta}^{\xi}a_{w_j,M}^{(p)}(z).
\]
Since point evaluation at \(z\) is continuous on \(A_\alpha^q\),
\[
  \sum_{j=1}^\infty c_j\mathcal C_{\mu,\beta}^{\xi}a_{w_j,M}^{(p)}(z)=g(z).
\]
Hence, for every \(z\in\mathbb B_n\),
\[
  \mathcal C_{\mu,\beta}^{\xi}f(z)=\sum_{j=1}^\infty c_j\mathcal C_{\mu,\beta}^{\xi}a_{w_j,M}^{(p)}(z)=g(z).
\]
Consequently,
\[
  \|\mathcal C_{\mu,\beta}^{\xi}f\|_{q,\alpha}
  \lesssim
  \left(\sup_{0\le r<1}\frac{\widehat\mu(r)}{(1-r)^\lambda}\right)\|f\|_{p,\alpha}.
\]

\end{proof}

 Now we turn to the necessity. In fact, we do not require that $1\leq p\leq q$ for the necessity.
\begin{proposition}  \label{proposition3.6}
  Let $\alpha>-1$, $\beta>0$ and $0<p,q<\infty$. Suppose $\mu$ is a positive Borel measure on $[0,1)$. Set 
\[
    \lambda=\beta+(n+1+\alpha)\Big(\frac{1}{p}-\frac{1}{q}\Big)>0.
\]
If $\mathcal{C}_{\mu,\beta}^{\xi}: A_{\alpha}^p(\mathbb{B}_n)\to A_{\alpha}^q(\mathbb{B}_n)$ is bounded, then $\mu$ is a $\lambda$-Carleson measure on $[0,1)$.
\end{proposition}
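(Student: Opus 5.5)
Test functions concentrated near $\xi$ will detect $\hat\mu$. For $a\in[0,1)$ I set $w=a\xi$ and
\[
f_a(z)=\frac{(1-a^2)^{\gamma-(n+1+\alpha)/p}}{(1-a\langle z,\xi\rangle)^{\gamma}},
\]
with $\gamma>(n+1+\alpha)/p$ fixed. Lemma \ref{lemma2.1} gives $\|f_a\|_{p,\alpha}\simeq 1$ uniformly in $a$. Boundedness of $\mathcal C_{\mu,\beta}^{\xi}$ therefore yields $\|\mathcal C_{\mu,\beta}^{\xi}f_a\|_{q,\alpha}\lesssim 1$. The task is to bound this norm from below by $\hat\mu(a)(1-a)^{-\lambda}$, up to a constant, for $a$ close to $1$.

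\textbf{Positivity of the integrand.} Restrict $z$ to the real segment $z=r\xi$ with $0\le r<1$. Then $\langle z,\xi\rangle=r$, and
\[
\mathcal C_{\mu,\beta}^{\xi}f_a(r\xi)=(1-a^2)^{\gamma-(n+1+\alpha)/p}\int_{[0,1)}\frac{{\rm d}\mu(t)}{(1-tr)^{\beta}(1-atr)^{\gamma}},
\]
which is a positive integral. For $r\ge a$ and $t\ge a$ we have $1-tr\le 1-a^2\lesssim 1-a$ and $1-atr\lesssim 1-a$. Discarding $t<a$ then gives
\[
\mathcal C_{\mu,\beta}^{\xi}f_a(r\xi)\gtrsim \hat\mu(a)\,(1-a)^{-\beta-(n+1+\alpha)/p},\qquad a\le r<1 .
\]

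\textbf{Extension off the segment.} A single segment has measure zero, so the lower bound must hold on a set of positive $v_\alpha$-measure. I plan to use the Korányi-type region
\[
D_a=\{z\in\mathbb B_n:\ |1-\langle z,\xi\rangle|<c(1-a),\ |z|>a\},
\]
with a small constant $c$. Its $v_\alpha$-measure is $\simeq (1-a)^{n+1+\alpha}$. The main obstacle is that for general $z\in D_a$ the integrand is no longer positive, so cancellation must be ruled out. For $z\in D_a$ and $t\in[a,1)$ I would show
\[
\arg(1-t\langle z,\xi\rangle)\ \text{and}\ \arg(1-at\langle z,\xi\rangle)\ \text{are small},
\]
and that their moduli are $\simeq 1-a$. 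Writing $\langle z,\xi\rangle=1-\eta$ with $|\eta|<c(1-a)$ gives $1-t\langle z,\xi\rangle=(1-t)+t\eta$. The term $t\eta$ is not small relative to $(1-t)$ when $t$ is near $1$, so this may fail. The fallback is to use $t<a$ as well and shrink $c$, or better to avoid positivity altogether, as follows.

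\textbf{Fallback via subharmonicity.} The function $g=\mathcal C_{\mu,\beta}^{\xi}f_a$ is holomorphic, so $|g|^q$ is plurisubharmonic. Take the point $z_a=a'\xi$ with $1-a'\simeq c(1-a)$. The sub-mean value property over the Bergman ball $E(z_a,1)$ gives
\[
|g(z_a)|^q\lesssim (1-a)^{-(n+1+\alpha)}\int_{E(z_a,1)}|g|^q\,{\rm d}v_\alpha\le (1-a)^{-(n+1+\alpha)}\|g\|_{q,\alpha}^q .
\]
Combined with the positive lower bound at the real point $z_a$ from the first step, this yields
\[
\hat\mu(a)(1-a)^{-\beta-(n+1+\alpha)/p}\lesssim (1-a)^{-(n+1+\alpha)/q}.
\]
That is, $\hat\mu(a)\lesssim(1-a)^{\lambda}$ for $a$ near $1$. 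This route uses only the positivity on the segment, so I will use it as the main argument.

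\textbf{Small $a$ and finiteness.} For $a$ bounded away from $1$, take $f\equiv1$ and evaluate at $z=0$. This gives $\mathcal C_{\mu,\beta}^{\xi}1(0)=\mu([0,1))$, which is finite by boundedness together with point evaluation. Combining the two ranges shows that $\mu$ is $\lambda$-Carleson.
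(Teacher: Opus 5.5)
Your final argument is correct and is essentially the paper's proof. The paper uses the same peak function with $\gamma=2(n+1+\alpha)/p$ and evaluates $\mathcal C_{\mu,\beta}^{\xi}f_{r,p}$ at the point $r\xi$, where the integrand is positive. It then discards $t<r$ and applies the standard pointwise estimate $|g(z)|\lesssim\|g\|_{q,\alpha}(1-|z|^2)^{-(n+1+\alpha)/q}$, which is exactly your sub-mean-value step. The Kor\'anyi-region detour and the separate small-$a$ case are unnecessary: evaluating at $a\xi$ already gives a bound uniform in $a\in[0,1)$.
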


\begin{proof}
For any $r\in [0,1)$, let 
\[
    f_{r,p}(z)=\frac{(1-r^2)^{(n+1+\alpha)/p}}{(1-r\langle z,\xi\rangle)^{2(n+1+\alpha)/p}}, \quad z\in\mathbb B_n.
\]
By Lemma \ref{lemma2.1}, $f_{r,p}\in A_{\alpha}^p(\mathbb{B}_n)$ and $\|f_{r,p}\|_{p,\alpha}\simeq 1$. If $\mathcal{C}_{\mu,\,\beta}^{\xi}$ is bounded from $A_{\alpha}^p(\mathbb{B}_n)$ to $A_{\alpha}^q(\mathbb{B}_n)$, then 
\[
    \sup_{r\in [0,1)}\big\|\mathcal{C}_{\mu,\,\beta}^{\xi}f_{r,p}\big\|_{q,\alpha}<\infty.
\]
On the other hand, by the pointwise estimate in Bergman spaces (see \cite[Theorem 2.1]{Zhu2005}), we get
\begin{align*}
\big\|\mathcal{C}_{\mu,\,\beta}^{\xi}f_{r,p}\big\|_{q,\alpha}&\gtrsim (1-r^2)^{(n+1+\alpha)/q}\big|(\mathcal{C}_{\mu,\,\beta}^{\xi}f_{r,p})(r\xi)\big|\\
&=(1-r^2)^{(n+1+\alpha)/q}\int_{[0,1)}\frac{(1-r^2)^{(n+1+\alpha)/p}}{(1-r^2t)^{2(n+1+\alpha)/p}(1-tr)^{\beta}}{\rm d}\mu(t)\\
&\geq (1-r^2)^{(n+1+\alpha)(1/p+1/q)}\int_{[r,1)}\frac{1}{(1-r^2t)^{2(n+1+\alpha)/p}(1-tr)^{\beta}}{\rm d}\mu(t).
\end{align*}
Since $1-r^2t\simeq 1-rt\simeq 1-r$ for $r<t<1$, we get
\begin{align*}
\big\|\mathcal{C}_{\mu,\beta}^{\xi}f_{r,p}\big\|_{q,\alpha}\gtrsim \frac{\hat{\mu}(r)}{(1-r)^{\beta+(n+1+\alpha)(1/p-1/q)}}=\frac{\hat{\mu}(r)}{(1-r)^{\lambda}}.
\end{align*}
Therefore,
\[
    \sup_{r\in [0,1)}\frac{\hat{\mu}(r)}{(1-r)^{\lambda}}<\infty.
\]
That is, $\mu$ is a $\lambda$-Carleson measure on $[0,1)$.
\end{proof}

\begin{proof}[{\bf Proof of Theorem 1.1}] 
The sufficiency follows from Proposition \ref{proposition3.1} when
$1<p\leq q<\infty$, from Proposition \ref{proposition3.2} when   $p=1$,
and from Proposition \ref{proposition3.5} when $0<p<1$. The necessity
follows from Proposition \ref{proposition3.6}. 
\end{proof}

\section{Proof of Theorem 1.2}

In this section, we present the proof of Theorem \ref{theorem1.2}. We first prove the sufficiency. To this end, some lemmas are provided.

Given a positive Borel measure $\mu$ on $[0,1)$, define 
\[
    V_{\mu}^{\xi}(z)=\int_{[0,1)}\frac{{\rm d}\mu(t)}{|1-t\langle z,\xi\rangle|^{\beta}},\quad z\in\mathbb{B}_n,
\]
and
\[
    V_{\mu}(u)=\int_{[0,1)}\frac{{\rm d}\mu(t)}{|1-tu|^{\beta}},\quad u\in\mathbb{D}.
\]

\begin{lemma}\label{lemma4.1}
Let $\alpha>-1$, $\theta>0$ and $\mu$ be a positive Borel measure on $[0,1)$. Then
\begin{equation}\label{equa4.1}
\int_{\mathbb{B}_n}\left|V_{\mu}^{\xi}(z)\right|^{\theta}{\rm d}v_{\alpha}(z)\simeq \int_{\mathbb{D}}\left|V_{\mu}(u)\right|^{\theta}(1-|u|^2)^{n+\alpha-1}{\rm d}A(u),
\end{equation}
where ${\rm d}A$ is the normalized area measure on $\mathbb{D}$.
\end{lemma}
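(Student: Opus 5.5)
The function $V_\mu^\xi(z)$ depends on $z$ only through $u=\langle z,\xi\rangle\in\mathbb D$, since $V_\mu^\xi(z)=V_\mu(\langle z,\xi\rangle)$. The plan is therefore to push the measure ${\rm d}v_\alpha$ forward under the map $z\mapsto\langle z,\xi\rangle$ and compute the resulting measure on $\mathbb D$ explicitly.

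By unitary invariance of ${\rm d}v_\alpha$, we may assume $\xi=e_1=(1,0,\dots,0)$, so that $u=z_1$.

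\textbf{Case $n=1$.} Here $n+\alpha-1=\alpha$ and ${\rm d}v_\alpha=c_\alpha(1-|u|^2)^\alpha{\rm d}A$. So \eqref{equa4.1} holds with equivalence constant $c_\alpha$.

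\textbf{Case $n\ge2$.} We use the standard slice integration formula (see \cite[Lemma 1.9]{Zhu2005}): for a function $g$ depending only on $z_1$,
\[
\int_{\mathbb B_n}g(z_1)\,{\rm d}v(z)=c\int_{\mathbb D}g(u)(1-|u|^2)^{n-1}\,{\rm d}A(u).
\]
This is obtained by writing $z=(u,z')$ with $z'\in\mathbb C^{n-1}$ and $|z'|^2<1-|u|^2$, and integrating out $z'$ over a ball of radius $\sqrt{1-|u|^2}$, whose volume is proportional to $(1-|u|^2)^{n-1}$. With the weight included, we apply Fubini and substitute $z'=\sqrt{1-|u|^2}\,\eta$ with $\eta\in\mathbb B_{n-1}$:
\[
\int_{|z'|^2<1-|u|^2}(1-|u|^2-|z'|^2)^\alpha\,{\rm d}V(z')
=(1-|u|^2)^{\alpha+n-1}\int_{\mathbb B_{n-1}}(1-|\eta|^2)^\alpha\,{\rm d}V(\eta).
\]
The last integral is a finite constant because $\alpha>-1$.

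Hence, for every nonnegative measurable $g$ on $\mathbb D$,
\[
\int_{\mathbb B_n}g(\langle z,\xi\rangle)\,{\rm d}v_\alpha(z)=C_{n,\alpha}\int_{\mathbb D}g(u)(1-|u|^2)^{n+\alpha-1}\,{\rm d}A(u).
\]
Taking $g=|V_\mu|^\theta$ gives \eqref{equa4.1}, in fact with equality up to an explicit constant. Both sides may be infinite; since $g\ge0$, Tonelli's theorem justifies every step regardless.

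The only real care needed is the Fubini/change-of-variables computation. It requires keeping track of the normalizations and checking that the exponent comes out as $n+\alpha-1$: the factor $(1-|u|^2)^{n-1}$ is the real $(2n-2)$-dimensional Jacobian of the scaling, and the factor $(1-|u|^2)^\alpha$ comes from the weight. No growth condition on $\mu$ is needed, since the identity is purely measure-theoretic.
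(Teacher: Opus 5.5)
Your argument is correct. It computes the push-forward of ${\rm d}v_\alpha$ under $z\mapsto\langle z,\xi\rangle$ by a more direct route than the paper.

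The paper first rotates $\xi$ to $e_1$ and passes to polar coordinates $z=r\eta$. It then applies the sphere slice formula from \cite{Zhu2005} to get $(n-1)\int_{\mathbb D}(1-|u|^2)^{n-2}|V_\mu(ru)|^\theta\,{\rm d}A(u)$. Next it rescales $w=ru$ and exchanges the order of integration. Finally it evaluates $\int_{|w|}^1 r(1-r^2)^\alpha(r^2-|w|^2)^{n-2}\,{\rm d}r$ via the substitution $r^2=|w|^2+(1-|w|^2)x$, which gives $(1-|w|^2)^{n+\alpha-1}$ times a Beta integral.

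You instead slice the ball itself as $z=(u,z')$ and rescale the fibre $\{|z'|^2<1-|u|^2\}$ onto $\mathbb B_{n-1}$. The weight and the real $(2n-2)$-dimensional Jacobian then give $(1-|u|^2)^{\alpha}(1-|u|^2)^{n-1}$ in one step.

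What your version buys:
\begin{itemize}
\item It needs only Tonelli and a linear change of variables, and avoids the two-stage polar/sphere computation.
\item It is stated for an arbitrary nonnegative measurable $g$ on $\mathbb D$. This is exactly the form the paper uses implicitly when it refers back to this proof in Lemma~\ref{lemma4.5} and Proposition~\ref{proposition4.8}.
\end{itemize}
The paper's route is longer but equally exact, since its Beta integral is an explicit constant, and it relies on a formula quoted from the literature.

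One cosmetic point: the result of \cite{Zhu2005} that the paper cites under the number you use is the sphere formula, with exponent $n-2$. It is not the ball formula with exponent $n-1$ that you attribute to it. Since you derive the ball formula yourself via Fubini, this does not affect your proof.
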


\begin{proof}
If $n=1$, then \eqref{equa4.1} holds by the rotational invariance of $(1-|u|^2)^{\alpha}{\rm d}A(u)$. 

Now we assume $n\geq 2$. Choose an appropriate unitary transformation $U$ of $\mathbb{C}^n$ such that $U\xi=e_1$. By the unitary invariance of ${\rm d}v_{\alpha}$, we have
\begin{equation}\label{equa4.2}
\begin{split}
&\quad \int_{\mathbb{B}_n}\left|V_{\mu}^{\xi}(z)\right|^{\theta}{\rm d}v_{\alpha}(z)=\int_{\mathbb{B}_n}\left|V_{\mu}(z_1)\right|^{\theta}{\rm d}v_{\alpha}(z)\\
&\simeq \int_{0}^1r^{2n-1}(1-r^2)^{\alpha}{\rm d}r\int_{\partial\mathbb{B}_n}\left|V_{\mu}(r\eta_1)\right|^{\theta}{\rm d}\sigma(\eta).
\end{split}
\end{equation}
Here, ${\rm d}\sigma$ is the normalized surface measure on $\partial\mathbb{B}_n$. By \cite[Lemma 1.9]{Zhu2005}, we have
\begin{align*}
\int_{\partial\mathbb{B}_n}\left|V_{\mu}(r\eta_1)\right|^{\theta}{\rm d}\sigma(\eta)=(n-1)\int_{\mathbb{D}}(1-|u|^2)^{n-2}\left|V_{\mu}(ru)\right|^{\theta}{\rm d}A(u).
\end{align*}
Substituting this into \eqref{equa4.2} yields
\begin{align}\label{equa4.003}
\int_{\mathbb{B}_n}\left|V_{\mu}^{\xi}(z)\right|^{\theta}{\rm d}v_{\alpha}(z)\simeq \int_{0}^1 r^{2n-1}(1-r^2)^{\alpha}{\rm d}r\int_{\mathbb{D}}(1-|u|^2)^{n-2}\left|V_{\mu}(ru)\right|^{\theta}{\rm d}A(u).
\end{align}
Making the change of variables $w=ru$, we have $|w|<r$ and ${\rm d}A(u)=r^{-2}{\rm d}A(w)$. Using Fubini's theorem, we obtain
\begin{equation}\label{equa4.004}
\begin{split}
&\quad \int_{0}^1 r^{2n-1}(1-r^2)^{\alpha}{\rm d}r\int_{\mathbb{D}}(1-|u|^2)^{n-2}\left|V_{\mu}(ru)\right|^{\theta}{\rm d}A(u)\\
&= \int_{0}^1 r(1-r^2)^{\alpha}{\rm d}r\int_{|w|<r}\big(r^2-|w|^2\big)^{n-2}\left|V_{\mu}(w)\right|^{\theta}{\rm d}A(w)\\
&=\int_{\mathbb{D}}\left|V_{\mu}(w)\right|^{\theta}\left(\int_{|w|}^{1}r(1-r^2)^{\alpha}(r^2-|w|^2)^{n-2}{\rm d}r\right){\rm d}A(w).
\end{split}
\end{equation}
It remains to estimate the inner integral. Taking $r^2=|w|^2+(1-|w|^2)x$, we obtain
\begin{equation}\label{equa4.005}
\begin{split}
&\quad\int_{|w|}^1 r(1-r^2)^{\alpha}\left(r^2-|w|^2\right)^{n-2}{\rm d}r\\
&\simeq (1-|w|^2)^{n+\alpha-1}\int_{0}^1 (1-x)^{\alpha}x^{n-2}{\rm d}x
\simeq (1-|w|^2)^{n+\alpha-1},
\end{split}
\end{equation}
since $\alpha>-1$ and $n-2>-1$. Therefore, combining \eqref{equa4.003}, \eqref{equa4.004} and \eqref{equa4.005},
\[
    \int_{\mathbb{B}_n}\left|V_{\mu}^{\xi}(z)\right|^{\theta}{\rm d}v_{\alpha}(z)\simeq \int_{\mathbb{D}}\left|V_{\mu}(w)\right|^{\theta}(1-|w|^2)^{n+\alpha-1}{\rm d}A(w).
\]
The proof is complete.
\end{proof}

\begin{lemma}\label{lemma4.2}
For $k\geq 1$, let $E_k=\{u\in\mathbb{D}: \delta_{k+1}<|1-u|\leq \delta_k\}$. Then 
\begin{equation*}
\int_{E_k}(1-|u|^2)^{n+\alpha-1}{\rm d}A(u)\simeq \delta_{k}^{n+1+\alpha}.
\end{equation*}
\end{lemma}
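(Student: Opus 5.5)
To prove Lemma~\ref{lemma4.2}, we write $u=1-\rho e^{i\phi}$ in polar coordinates centred at the boundary point $1$. Then
\[
1-|u|^2=2\rho\cos\phi-\rho^2 ,
\]
so $u\in\mathbb D$ exactly when $|\phi|<\pi/2$ and $\rho<2\cos\phi$. Since $\rho\le\delta_k\le 1/2$ for $k\ge1$, the annulus $E_k$ sits near $1$, and $\mathrm dA(u)=\pi^{-1}\rho\,\mathrm d\rho\,\mathrm d\phi$. Set $\gamma=n+\alpha-1>-1$. Then
\[
\int_{E_k}(1-|u|^2)^{\gamma}\,\mathrm dA(u)\simeq\int_{\delta_{k+1}}^{\delta_k}\rho^{1+\gamma}\int_{\{\phi:\,2\cos\phi>\rho\}}(2\cos\phi-\rho)^{\gamma}\,\mathrm d\phi\,\mathrm d\rho .
\]

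The whole estimate reduces to showing that the inner angular integral $\Phi(\rho)$ is comparable to a constant for $0<\rho\le 1/2$.

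For the lower bound, restrict to $|\phi|\le\pi/3$. There $2\cos\phi-\rho$ lies between $1/2$ and $2$, so its $\gamma$-th power is bounded above and below by constants whatever the sign of $\gamma$. Hence $\Phi(\rho)\gtrsim1$.

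For the upper bound, the integrand is again bounded on $|\phi|\le\pi/3$. On the rest of the range, substitute $x=2\cos\phi-\rho$ near the endpoint $\phi_0=\arccos(\rho/2)$, where $\phi_0\in[\arccos(1/4),\pi/2)$ stays bounded away from $0$ and $\pi$. There $|\mathrm dx/\mathrm d\phi|=2\sin\phi\simeq1$, so this part is $\lesssim\int_0^{2}x^{\gamma}\,\mathrm dx<\infty$ because $\gamma>-1$. This endpoint integrability when $\gamma<0$ is the only delicate point, and it is exactly where $\alpha>-1$ and $n\ge1$ are used. So $\Phi(\rho)\simeq1$ uniformly.

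Consequently the integral is comparable to
\[
\int_{\delta_{k+1}}^{\delta_k}\rho^{1+\gamma}\,\mathrm d\rho=\int_{\delta_{k+1}}^{\delta_k}\rho^{n+\alpha}\,\mathrm d\rho\simeq\delta_k^{\,n+1+\alpha},
\]
since $\delta_{k+1}=\delta_k/2$ and $n+1+\alpha>0$. This proves the lemma.
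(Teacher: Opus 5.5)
Your proof is correct and matches the paper's argument: the paper also sets $w=1-u=re^{i\theta}$, uses $1-|u|^2=r(2\cos\theta-r)$, and reduces everything to the uniform two-sided boundedness of $\int_{\cos\theta>r/2}(2\cos\theta-r)^{n+\alpha-1}\,{\rm d}\theta$ for $0<r\le 1/2$. The paper only asserts that uniform bound, and your split at $|\phi|=\pi/3$ together with the substitution $x=2\cos\phi-\rho$ near the endpoints supplies its justification.
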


\begin{proof}
Put $w=1-u$, then $u\in E_k$ means that
\[
    \frac{1}{2}\delta_k\leq |w|\leq \delta_k \quad \text{and}\quad |1-w|<1.
\]
It follows that $|w|^2<2{\rm Re}w$. Writing $w=r{\rm e}^{i\theta}$, then $\cos\theta>r/2$ and
\[
    1-|u|^2=1-|1-w|^2=2{\rm Re}w-|w|^2=r(2\cos\theta-r).
\]
Therefore, 
\begin{align*}
\int_{E_k}(1-|u|^2)^{n+\alpha-1}{\rm d}A(u)&\simeq \int_{\delta_k/2}^{\delta_k}\int_{\cos\theta>r/2}\left[r(2\cos\theta-r)\right]^{n+\alpha-1}r{\rm d}\theta\,{\rm d}r\\
&=\int_{\delta_k/2}^{\delta_k}r^{n+\alpha}{\rm d}r\int_{\cos\theta>r/2}(2\cos\theta-r)^{n+\alpha-1}{\rm d}\theta.
\end{align*}
Since $n+\alpha-1>-1$, the integral
\[
    \int_{\cos\theta>r/2}(2\cos\theta-r)^{n+\alpha-1}{\rm d}\theta
\]
is bounded above and below for $0<r\leq \frac{1}{2}$. Thus,
\begin{equation*}
\int_{E_k}(1-|u|^2)^{n+\alpha-1}{\rm d}A(u)\simeq \int_{\delta_k/2}^{\delta_k}r^{n+\alpha}{\rm d}r\simeq \delta_k^{n+1+\alpha}.\qedhere
\end{equation*}
\end{proof}

\begin{lemma}\label{lemma4.3}
Let $\alpha>-1$, $\beta>0$ and   $\theta>0$. Set
\[
    \lambda=\beta-\frac{n+1+\alpha}{\theta}.
\]
Suppose that $\mu$ is a finite positive Borel measure on $[0,1)$. Then $V_{\mu}^{\xi}\in L_{\alpha}^{\theta}(\mathbb{B}_n)$ if and only if
\begin{equation}\label{equa4.3}
\int_{0}^1\Big(\frac{\hat{\mu}(r)}{(1-r)^{\lambda}}\Big)^{\theta}\frac{{\rm d}r}{1-r}<\infty.
\end{equation}
\end{lemma}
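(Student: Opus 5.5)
By Lemma~\ref{lemma4.1}, it suffices to show that
\[
\int_{\mathbb D}V_\mu(u)^\theta(1-|u|^2)^{n+\alpha-1}\,{\rm d}A(u)<\infty
\]
if and only if \eqref{equa4.3} holds. By Lemma~\ref{lemma2.7}, \eqref{equa4.3} is equivalent to $\sum_{k\ge1}\big(\hat\mu(r_k)\delta_k^{-\lambda}\big)^\theta<\infty$. Split $\mathbb D$ into the region $\{|1-u|>1/2\}$ and the annuli $E_k$ of Lemma~\ref{lemma4.2}, $k\ge1$.

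On $\{|1-u|>1/2\}$ we have $|1-tu|\ge 1-t|u|\gtrsim$ a positive constant? That is not uniform near $|u|=1$, so I would use $|1-tu|\gtrsim|1-u|$ for $t\in[0,1)$ and $u\in\mathbb D$. This holds because $|1-tu|\simeq(1-t)+|1-u|$, the estimate already used in Lemma~\ref{lemma3.3}. Hence $V_\mu$ is bounded by $C\mu([0,1))$ there, and that part of the integral is finite since $\mu$ is finite and $n+\alpha-1>-1$.

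\textbf{Upper bound on $E_k$.} For $u\in E_k$ we have $|1-u|\simeq\delta_k$, so $|1-tu|\simeq(1-t)+\delta_k$. Decomposing over the intervals $I_j$,
\[
V_\mu(u)\simeq\sum_{j\ge0}\frac{\mu(I_j)}{(\delta_j+\delta_k)^\beta}\simeq \delta_k^{-\beta}\hat\mu(r_k)+\sum_{j<k}\delta_j^{-\beta}\mu(I_j).
\]
Using Lemma~\ref{lemma4.2}, the integral over $E_k$ is then $\simeq\delta_k^{n+1+\alpha}V_k^\theta$, where $V_k$ denotes this two-sided quantity. Since $\delta_k^{n+1+\alpha-\beta\theta}=\delta_k^{-\lambda\theta}$, the first term contributes exactly $\sum_k(\hat\mu(r_k)\delta_k^{-\lambda})^\theta$. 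This gives necessity at once, because $V_k\ge\delta_k^{-\beta}\hat\mu(r_k)$ and the integral dominates $\sum_k\delta_k^{n+1+\alpha}V_k^\theta$.

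\textbf{Sufficiency and the main obstacle.} The main obstacle is controlling
\[
\sum_k\delta_k^{n+1+\alpha}\Big(\sum_{j<k}2^{j\beta}\mu(I_j)\Big)^\theta
\]
by $\sum_j(2^{j\lambda}\hat\mu(r_j))^\theta$. I would bound $\mu(I_j)\le\hat\mu(r_j)$ and set $a_j=2^{j\lambda}\hat\mu(r_j)$. The inner sum is then $\sum_{j<k}2^{j(\beta-\lambda)}a_j$, where $\beta-\lambda=(n+1+\alpha)/\theta>0$, and the outer weight is $2^{-k(n+1+\alpha)}=2^{-k\theta(\beta-\lambda)}$. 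This is a discrete Hardy inequality with geometric weights, and I would treat it in two cases.

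\emph{Case $\theta\le1$.} Use $(\sum x_j)^\theta\le\sum x_j^\theta$, then interchange the sums. The resulting sum over $k>j$ is geometric, which yields $\sum_j a_j^\theta$.

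\emph{Case $\theta>1$.} Apply H\"older's inequality with the split weight $2^{j(\beta-\lambda)}=2^{j(\beta-\lambda)/2}\cdot2^{j(\beta-\lambda)/2}$. This gives
\[
\Big(\sum_{j<k}2^{j(\beta-\lambda)}a_j\Big)^\theta\lesssim 2^{k(\beta-\lambda)(\theta-1)/2}\sum_{j<k}2^{j(\beta-\lambda)\theta/2}a_j^\theta .
\]
Multiplying by $2^{-k\theta(\beta-\lambda)}$ and summing first over $k>j$ again gives a convergent geometric series, bounded by $\sum_j a_j^\theta$.

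Finally, the $j=0$ term involves $\mu(I_0)$, which is finite, and it contributes $\sum_k\delta_k^{n+1+\alpha}<\infty$. This completes the proof of the equivalence.
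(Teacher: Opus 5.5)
Your proposal is essentially the paper's proof. It reduces to the disk via Lemma~\ref{lemma4.1}, discretizes via Lemma~\ref{lemma2.7}, estimates $V_\mu$ dyadically on the sets $E_k$ and integrates with Lemma~\ref{lemma4.2}, and closes with a discrete Hardy/convolution bound split into $\theta\le1$ and $\theta>1$. The differences are cosmetic. Your two-sided bound $|1-tu|\simeq(1-t)+|1-u|$ absorbs the paper's separate treatment of $t<1/2$ and gives necessity at the same time, and you use H\"older where the paper uses Young's inequality.

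There is one slip to fix: your displayed H\"older estimate is false as written. For $a_{k-1}=1$ and all other $a_j=0$, its left side is $\simeq 2^{k(\beta-\lambda)\theta}$ while its right side is only $\simeq 2^{k(\beta-\lambda)(\theta-1/2)}$. The correct prefactor is $\big(\sum_{j<k}2^{j(\beta-\lambda)\theta'/2}\big)^{\theta-1}\simeq 2^{k(\beta-\lambda)\theta/2}$, where $1/\theta+1/\theta'=1$. After multiplying by $2^{-k\theta(\beta-\lambda)}$, the sum over $k>j$ is still the convergent geometric series $\sum_{k>j}2^{-(k-j)(\beta-\lambda)\theta/2}$, so your conclusion is unaffected.
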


\begin{proof}
By Lemma \ref{lemma2.7}, the integral condition in \eqref{equa4.3} is equivalent to
\[
    \sum_{k=1}^{\infty}\left(\frac{\hat{\mu}(r_k)}{\delta_k^{\lambda}}\right)^{\theta}<\infty,
\]
where $r_k=1-2^{-k}$, $\delta_k=2^{-k}$. Denote by $h_k=\hat{\mu}(r_k)/\delta_k^{\lambda}$. We only need to establish the equivalence between the condition   $\{h_k\}\in \ell^{\theta}$  and the condition $V_{\mu}^{\xi}\in L_{\alpha}^{\theta}(\mathbb{B}_n)$.

$\bullet$(i) Assume first that   $\{h_k\}\in \ell^{\theta}$. Let
\[
    V_{\mu,0}(u)=\int_{[0,\frac{1}{2})}\frac{{\rm d}\mu(t)}{|1-tu|^{\beta}},\quad V_{\mu,1}(u)=\int_{[\frac{1}{2}, 1)}\frac{{\rm d}\mu(t)}{|1-tu|^{\beta}},\quad u\in\mathbb{D}.
\]
By Lemma \ref{lemma4.1}, 
\begin{equation*}
\int_{\mathbb{B}_n}\left|V_{\mu}^{\xi}(z)\right|^{\theta}{\rm d}v_{\alpha}(z)\simeq \int_{\mathbb{D}}\left(|V_{\mu,0}(u)|^{\theta}+|V_{\mu,1}(u)|^{\theta}\right)(1-|u|^2)^{n+\alpha-1}{\rm d}A(u).
\end{equation*}
Clearly, 
\[
    \int_{\mathbb{D}}\left|V_{\mu,0}(u)\right|^{\theta}(1-|u|^2)^{n+\alpha-1}{\rm d}A(u)<\infty,
\]
since $\mu$ is finite and $n+\alpha-1>-1$. Hence we need to prove the finiteness of the integral $\int_{\mathbb{D}}|V_{\mu,1}(u)|^{\theta}(1-|u|^2)^{n+\alpha-1}{\rm d}A(u)$.

For $j\geq 1$, write $E_j=\{u\in\mathbb{D}: \delta_{j+1}<|1-u|\leq \delta_j\}$, $I_j=[r_j,r_{j+1})$ and $\mu_{j}=\mu(I_j)$. If $u\in E_k$ and $t\in I_j\, (k,j\geq 1)$, then $1-t\simeq \delta_j$, $|1-u|\simeq \delta_k$ and $t\geq 1/2$. Since ${\rm Re}(1-u)>0$, we get
\begin{align}\label{equa4.4}
|1-tu|^2=\left|1-t+t(1-u)\right|^2\geq (1-t)^2+t^2|1-u|^2\simeq \delta_j^2+\delta_k^2.
\end{align}
Hence, $|1-tu|\gtrsim \max\{\delta_j,\delta_k\}$. Therefore, when $u\in E_k$, we obtain
\begin{align*}
V_{\mu,1}(u)=\sum_{j=1}^{\infty}\int_{I_j}\frac{{\rm d}\mu(t)}{|1-tu|^{\beta}}\lesssim \sum_{j=1}^{\infty}\frac{\mu_j}{\max\{\delta_j,\delta_k\}^{\beta}}=\sum_{j<k}\frac{\mu_j}{\delta_j^{\beta}}+\sum_{j\geq k}\frac{\mu_j}{\delta_k^{\beta}}.
\end{align*}
Note that $\mu_j\leq \hat{\mu}(r_j)=h_j\cdot \delta_j^{\lambda}$, so we have
\begin{equation*}
\begin{split}
\sum_{j<k}\frac{\mu_j}{\delta_j^{\,\beta}}\leq \sum_{j<k}h_j\cdot \delta_j^{\,\lambda-\beta}=\sum_{j<k}h_j\cdot \delta_j^{\,-(n+1+\alpha)/\theta}=2^{k(n+1+\alpha)/\theta} \sum_{j<k}2^{(j-k)(n+1+\alpha)/\theta}\cdot h_j.
\end{split}
\end{equation*}
Besides,
\begin{equation*}
\sum_{j\geq k}\frac{\mu_j}{\delta_k^{\,\beta}}=\frac{\hat{\mu}(r_k)}{\delta_k^{\,\beta}}=h_k\cdot\delta_k^{\,\lambda-\beta}=2^{k(n+1+\alpha)/\theta}\cdot h_k.
\end{equation*}
Thus, when $u\in E_k$, we get
\begin{equation*}
V_{\mu,1}(u)\lesssim 2^{k(n+1+\alpha)/\theta}\cdot \widetilde{h}_k,
\end{equation*}
where $\widetilde{h}_k=\sum_{j\leq k}2^{(j-k)(n+1+\alpha)/\theta}\cdot h_j$. Then using Lemma \ref{lemma4.2}, we obtain
\begin{align*}
\int_{E_k}\left|V_{\mu,1}(u)\right|^{\theta}(1-|u|^2)^{n+\alpha-1}{\rm d}A(u)\lesssim \widetilde{h}_k^{\,\theta}.
\end{align*}

Let $h_0=0$ and $\mathbf{h}=\{h_k\}_{k\geq 0}$. Choose a sequence $\mathbf{a}=\{a_k\}$ such that
\[
     a_k=2^{-k(n+1+\alpha)/\theta},\quad k\geq 0.
\]
Then 
\[
    \widetilde{h}_k=\sum_{j=0}^{k}a_j\cdot h_{k-j}.
\]
  If  $\theta\geq 1$, the discrete Young inequality on $\ell^{\theta}$ yields
\[
    \left\|\{\widetilde{h}_k\}\right\|_{\ell^{\theta}}=\big\|\mathbf{a}*\mathbf{h}\big\|_{\ell^{\theta}}\leq \big\|\mathbf{a}\big\|_{\ell^1}\cdot \big\|\mathbf{h}\big\|_{\ell^{\theta}}\lesssim \big\|\mathbf{h}\big\|_{\ell^{\theta}}.
\]
 If $0<\theta<1$, subadditivity gives, with $A=n+1+\alpha>0$,
\[
    \widetilde h_k^{\,\theta}
    \leq \sum_{j=1}^{k}2^{-(k-j)A}h_j^{\,\theta}.
\]
Consequently,
\[
    \sum_{k=1}^{\infty}\widetilde h_k^{\,\theta}
    \leq \sum_{j=1}^{\infty}h_j^{\,\theta}
          \sum_{m=0}^{\infty}2^{-mA}
    \lesssim \sum_{j=1}^{\infty}h_j^{\,\theta}.
\]
 Consequently, 
\begin{equation}\label{equa4.5}
\begin{split}
&\quad \int_{\mathbb{D}\cap \big\{|1-u|<1/2\big\}}\left|V_{\mu,1}(u)\right|^{\theta}(1-|u|^2)^{n+\alpha-1}{\rm d}A(u)\\
&=\sum_{k=1}^{\infty}\int_{E_k}\left|V_{\mu,1}(u)\right|^{\theta}(1-|u|^2)^{n+\alpha-1}{\rm d}A(u)\lesssim \sum_{k=1}^{\infty}\widetilde{h}_k^{\,\theta}\lesssim \sum_{k=1}^{\infty}h_k^{\,\theta}<\infty.
\end{split}
\end{equation}
On the other hand, if $|1-u|\geq 1/2$ and $t\geq 1/2$, then
\[
    |1-tu|\gtrsim (1-t)+t|1-u|\geq \frac{1}{4}.
\]
It follows that 
\begin{align*}
&\quad\int_{\mathbb{D}\cap\big\{|1-u|\geq 1/2\big\}}\left|V_{\mu,1}(u)\right|^{\theta}(1-|u|^2)^{n+\alpha-1}{\rm d}A(u)\\
&\lesssim \left[4^{\beta}\mu([1/2,1))\right]^{\theta}\cdot\int_{\mathbb{D}}(1-|u|^2)^{n+\alpha-1}{\rm d}A(u)<\infty.
\end{align*}
This, together with \eqref{equa4.5}, shows that
\[
    \int_{\mathbb{D}}\left|V_{\mu,1}(u)\right|^{\theta}(1-|u|^2)^{n+\alpha-1}{\rm d}A(u)<\infty.
\]

$\bullet$(ii) Assume that $V_{\mu}^{\xi}\in L_{\alpha}^{\theta}(\mathbb{B}_n)$. If $u\in E_k$ and $t\in [r_k,1)$, then $|1-tu|\leq (1-t)+t|1-u|\lesssim \delta_k$. Hence
\[
    V_{\mu}(u)\geq \int_{[r_k,1)}\frac{{\rm d}\mu(t)}{|1-tu|^{\beta}}\gtrsim \frac{\hat{\mu}(r_k)}{\delta_k^{\,\beta}}=h_k\cdot \delta_k^{-(n+1+\alpha)/\theta}.
\]
Then by Lemma \ref{lemma4.1} and Lemma \ref{lemma4.2}, we obtain
\begin{align*}
\int_{\mathbb B_n}\left|V_{\mu}^{\xi}(z)\right|^{\theta}{\rm d}v_{\alpha}(z)&\simeq\int_{\mathbb{D}}\left|V_{\mu}(u)\right|^{\theta}(1-|u|^2)^{n+\alpha-1}{\rm d}A(u)\\
&\geq \sum_{k=1}^{\infty}\int_{E_k}\left|V_{\mu}(u)\right|^{\theta}(1-|u|^2)^{n+\alpha-1}{\rm d}A(u)\\
&\gtrsim \sum_{k=1}^{\infty}\delta_k^{\,n+1+\alpha}\cdot \big(h_k\cdot \delta_k^{-(n+1+\alpha)/\theta}\big)^{\theta}=\sum_{k=1}^{\infty}h_k^{\,\theta}.
\end{align*}
Consequently, $\{h_k\}\in \ell^{\,\theta}$. The proof is now complete.
\end{proof}

We are now ready to prove the sufficiency of Theorem \ref{theorem1.2}.

\begin{proposition}\label{proposition4.4}
Let $\alpha>-1$, $\beta>0$ and   $0<q<p\leq\infty$. Set
\[
    \frac{1}{\theta}=\frac{1}q-\frac{1}p,
    \qquad
    \lambda=\beta+(n+1+\alpha)\Big(\frac{1}{p}-\frac{1}{q}\Big)
    =\beta-\frac{n+1+\alpha}{\theta},
\] 
 where $1/\infty=0$.  Assume $\mu$ is finite and
\[
    \int_{0}^{1}\left(\frac{\hat{\mu}(r)}{(1-r)^{\lambda}}\right)^{\theta}\frac{{\rm d}r}{1-r}<\infty.
\]
Then  $\mathcal{C}_{\mu,\beta}^{\xi}: A_{\alpha}^p(\mathbb{B}_n)\to A_{\alpha}^q(\mathbb{B}_n)$ is bounded. \end{proposition}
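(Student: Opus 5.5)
The plan is to bound $\mathcal C_{\mu,\beta}^{\xi}f$ pointwise by the product of a radial maximal function of $f$ and the function $V_\mu^{\xi}$, and then use Hölder's inequality with exponents $p/q$ and $\theta/q$, since $q/p+q/\theta=1$. For $f\in A_\alpha^p(\mathbb B_n)$ and $z\in\mathbb B_n$ we have $|f(tz)|\le \mathcal Rf(z)$ for every $t\in[0,1)$, where $\mathcal Rf(z)=\sup_{0\le s\le1}|f(sz)|$ as in Lemma~\ref{lemma2.5}. Hence
\[
|\mathcal C_{\mu,\beta}^{\xi}f(z)|\le \int_{[0,1)}\frac{|f(tz)|}{|1-t\langle z,\xi\rangle|^{\beta}}\,{\rm d}\mu(t)\le \mathcal Rf(z)\,V_\mu^{\xi}(z).
\]
Here $\mu$ is finite, which guarantees that $\mathcal C_{\mu,\beta}^{\xi}f$ is well defined and holomorphic: the integrand is locally uniformly bounded on compact subsets of $\mathbb B_n$.

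Next, I would treat the case $p<\infty$. Hölder's inequality gives
\[
\|\mathcal C_{\mu,\beta}^{\xi}f\|_{q,\alpha}^q\le \Big(\int_{\mathbb B_n}(\mathcal Rf)^p\,{\rm d}v_\alpha\Big)^{q/p}\Big(\int_{\mathbb B_n}(V_\mu^{\xi})^{\theta}\,{\rm d}v_\alpha\Big)^{q/\theta}.
\]
By Lemma~\ref{lemma2.5}, the first factor is bounded by $\|f\|_{p,\alpha}^q$ up to a constant. By Lemma~\ref{lemma4.3}, applied with the same $\theta$ and $\lambda=\beta-(n+1+\alpha)/\theta$, the hypothesis that $\mu$ is finite and the integral condition holds is exactly $V_\mu^{\xi}\in L^\theta_\alpha(\mathbb B_n)$. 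So the second factor is finite, and boundedness follows. Note that Lemma~\ref{lemma4.3} is valid for every $\theta>0$, so no restriction like $q\ge1$ is needed, and Lemma~\ref{lemma2.5} holds for every $0<p<\infty$.

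For the case $p=\infty$ we have $\theta=q$, and the argument is even simpler. Since $|f(tz)|\le\|f\|_\infty$, we get $|\mathcal C_{\mu,\beta}^{\xi}f(z)|\le \|f\|_\infty V_\mu^{\xi}(z)$, and therefore $\|\mathcal C_{\mu,\beta}^{\xi}f\|_{q,\alpha}\le \|f\|_\infty\|V_\mu^{\xi}\|_{L^q_\alpha}$. The last norm is finite by Lemma~\ref{lemma4.3} with $\theta=q$.

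The substantive difficulty, namely translating $V_\mu^{\xi}\in L^\theta_\alpha$ into the tail condition, has already been handled by Lemma~\ref{lemma4.3}, and the radial maximal control is supplied by Lemma~\ref{lemma2.5}. So the remaining point to check carefully is the measurability and holomorphy of $\mathcal C_{\mu,\beta}^{\xi}f$. Measurability of $\mathcal Rf$ follows because the supremum over $s$ can be taken over rationals by continuity. Holomorphy of $\mathcal C_{\mu,\beta}^{\xi}f$ follows by differentiating under the integral sign on each ball $|z|\le R<1$. There, $|f(tz)|\le\sup_{|w|\le R}|f(w)|$ and $|1-t\langle z,\xi\rangle|\ge 1-R$, so dominated convergence applies because $\mu$ is finite.
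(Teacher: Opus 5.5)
Your proposal is correct and follows the paper's proof essentially step for step. Both use the pointwise bound $|\mathcal C_{\mu,\beta}^{\xi}f|\le \mathcal Rf\cdot V_\mu^{\xi}$, H\"older with exponents $p/q$ and $\theta/q$, Lemma~\ref{lemma2.5} and Lemma~\ref{lemma4.3}, and the trivial bound $\mathcal Rf\le\|f\|_\infty$ when $p=\infty$; your remarks on the measurability of $\mathcal Rf$ and the holomorphy of $\mathcal C_{\mu,\beta}^{\xi}f$ supply details the paper leaves implicit.
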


\begin{proof}
For $f\in A_\alpha^p(\mathbb B_n)$, put
$\mathcal Rf(z)=\sup_{0\leq s\leq1}|f(sz)|$. Positivity of $\mu$ gives the
pointwise estimate
\[
  |\mathcal C_{\mu,\beta}^{\xi}f(z)|
  \leq \mathcal Rf(z)V_\mu^\xi(z),
  \qquad z\in\mathbb B_n.
\]
Suppose first that $p<\infty$. By H\"older's inequality,
Lemma~\ref{lemma2.5}, and Lemma~\ref{lemma4.3}, we obtain
\[
\begin{split}
  \|\mathcal C_{\mu,\beta}^{\xi}f\|_{q,\alpha}
  \leq \|\mathcal Rf\|_{p,\alpha}
     \|V_\mu^\xi\|_{L_\alpha^\theta} \lesssim \|f\|_{p,\alpha}\|V_\mu^\xi\|_{L_\alpha^\theta}<\infty.
\end{split}
\]
If $p=\infty$, then $\theta=q$ and $\mathcal Rf\leq\|f\|_\infty$.
The same estimate therefore follows directly from
\[
  |\mathcal C_{\mu,\beta}^{\xi}f(z)|
  \leq\|f\|_\infty V_\mu^\xi(z)
\]
and Lemma~\ref{lemma4.3}. This completes the proof.
\end{proof}

We proceed to prove the necessity of Theorem \ref{theorem1.2}. Again some lemmas and basic facts are needed.

\begin{lemma}\label{lemma4.5}
Let   $0<p<\infty$  and $r_k=1-2^{-k}$, $k\geq 1$. Define
\begin{equation*}
f_k(z)=\frac{(1-r_k^2)^{(n+1+\alpha)/p}}{\big(1-r_k\langle z,\xi\rangle\big)^{2(n+1+\alpha)/p}},\quad z\in\mathbb{B}_n.
\end{equation*}
Then for any sequence $\{c_k\}\in \ell^p$, $\sum\limits_kc_kf_k\in A_{\alpha}^p(\mathbb{B}_n)$ and 
\[
    \left\|\sum_{k}c_kf_k\right\|_{p,\alpha}\lesssim \Big(\sum_{k}|c_k|^p\Big)^{1/p}.
\]
\end{lemma}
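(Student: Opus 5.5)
The plan is to split according to whether $p\le 1$ or $p>1$. The case $p\le1$ is immediate from subadditivity: $|\sum_k c_kf_k|^p\le\sum_k|c_k|^p|f_k|^p$. Integrating against ${\rm d}v_\alpha$ and using $\|f_k\|_{p,\alpha}\simeq1$ (Lemma~\ref{lemma2.1}, exactly as in the proof of Proposition~\ref{proposition3.6}) gives the bound. This also shows the series converges absolutely in $A_\alpha^p$ and locally uniformly, so the sum is holomorphic.

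For $p>1$ the triangle inequality alone loses too much, so I will exploit the almost-orthogonality of the $f_k$, which peak at the dyadically separated points $r_k\xi$. First, by unitary invariance (rotate $\xi$ to $e_1$) and the slice-integration argument of Lemma~\ref{lemma4.1}, I reduce to a one-variable statement. Each $f_k$ depends only on $u=\langle z,\xi\rangle$, so
\[
\Big\|\sum_kc_kf_k\Big\|_{p,\alpha}^p\simeq\int_{\mathbb D}\Big|\sum_kc_kg_k(u)\Big|^p(1-|u|^2)^{n+\alpha-1}{\rm d}A(u),
\]
where $g_k(u)=\delta_k^{A/p}(1-r_ku)^{-2A/p}$ and $A=n+1+\alpha$. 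I then decompose $\mathbb D$ into the regions $E_m=\{\delta_{m+1}<|1-u|\le\delta_m\}$ for $m\ge1$, together with $\{|1-u|>1/2\}$. On $E_m$ we have $|1-r_ku|\gtrsim\max\{\delta_k,\delta_m\}$ by the estimate \eqref{equa4.4}. Hence
\[
|g_k(u)|\lesssim \delta_m^{-A/p}\,2^{-|k-m|A/p}\quad\text{on }E_m,
\]
using $\delta_k^{A/p}\delta_k^{-2A/p}=\delta_k^{-A/p}\le\delta_m^{-A/p}2^{-(m-k)A/p}$ for $k\le m$, and $\delta_k^{A/p}\delta_m^{-2A/p}=\delta_m^{-A/p}2^{-(k-m)A/p}$ for $k\ge m$. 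Writing $b=\{2^{-|j|A/p}\}\in\ell^1(\mathbb Z)$, we get $|\sum_kc_kg_k|\lesssim\delta_m^{-A/p}(|c|*b)_m$ on $E_m$. Lemma~\ref{lemma4.2} gives the weighted area of $E_m$ as $\simeq\delta_m^{A}$, so the contribution of $E_m$ is $\lesssim((|c|*b)_m)^p$. Summing over $m$ and applying discrete Young's inequality (valid since $p\ge1$) bounds the total by $\|b\|_{\ell^1}^p\|c\|_{\ell^p}^p$. On the remaining region $|1-u|>1/2$ every $|g_k|\lesssim\delta_k^{A/p}$, so the sum is bounded by $\sum_k|c_k|\delta_k^{A/p}\le\|c\|_{\ell^p}\|\{\delta_k^{A/p}\}\|_{\ell^{p'}}$ via Hölder, and this region has finite measure.

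The main obstacle is the localization step: obtaining a pointwise bound for the whole series on each $E_m$ with a summable geometric off-diagonal decay. This hinges on the lower bound $|1-r_ku|\gtrsim\delta_k+|1-u|$ for $u\in\mathbb D$. I will justify it as in \eqref{equa4.4}, using ${\rm Re}(1-u)>0$, and need it to hold uniformly for all $u\in\mathbb D$. The convergence and holomorphy of the sum for $p>1$ follow from local uniform convergence, since $\{c_k\}$ is bounded and $\sum_k\delta_k^{A/p}<\infty$.
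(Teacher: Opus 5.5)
Your proposal is correct and follows the paper's proof essentially step by step. For $0<p\le 1$ you use subadditivity together with $\|f_k\|_{p,\alpha}\simeq 1$. For $1<p<\infty$ you reduce to the weighted disk integral, bound $|1-r_ku|\gtrsim\delta_k+\delta_m$ on $E_m$ via \eqref{equa4.4}, apply Lemma~\ref{lemma4.2} and discrete Young's inequality on $\bigcup_m E_m$, and use H\"older's inequality on $\{|1-u|>1/2\}$; your explicit remarks on local uniform convergence only make precise what the paper leaves implicit.
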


\begin{proof}
 Assume first that $1<p<\infty$.
 By an argument similar to that used in Lemma~\ref{lemma4.1}, we have
\begin{align}\label{equa4.6}
\Big\|\sum_{k}c_kf_k\Big\|_{p,\alpha}^p\simeq \int_{\mathbb{D}}\left|\sum_{k}c_k\frac{(1-r_k^2)^{(n+1+\alpha)/p}}{(1-r_k u)^{2(n+1+\alpha)/p}}\right|^p(1-|u|^2)^{n+\alpha-1}{\rm d}A(u).
\end{align}
Recall that $E_m=\{u\in\mathbb{D}: \delta_{m+1}<|1-u|\leq \delta_m\}$ and $\delta_m=2^{-m}$. On the set $E_m$, by \eqref{equa4.4}, we have
\begin{align*}
|1-r_k u|\gtrsim (1-r_k)+|1-u|\simeq \delta_k+\delta_m,\quad k\geq 1.
\end{align*}
Thus, for $u\in E_m$,
\begin{align*}
\frac{(1-r_k^2)^{(n+1+\alpha)/p}}{|1-r_k u|^{2(n+1+\alpha)/p}}\lesssim \frac{\delta_k^{\,(n+1+\alpha)/p}}{\big(\delta_k+\delta_m\big)^{2(n+1+\alpha)/p}}\lesssim \delta_m^{-(n+1+\alpha)/p}\cdot 2^{-|k-m|(n+1+\alpha)/p}.
\end{align*}
Set $d_m=\sum\limits_{k}|c_k|\cdot 2^{-|k-m|(n+1+\alpha)/p}$. 
Using Lemma \ref{lemma4.2}, we obtain
\begin{equation*}
\begin{split}
&\quad \int_{E_m}\left|\sum_{k}c_k\frac{(1-r_k^2)^{(n+1+\alpha)/p}}{(1-r_ku)^{2(n+1+\alpha)/p}}\right|^p(1-|u|^2)^{n+\alpha-1}{\rm d}A(u)\\
&\lesssim \int_{E_m}(1-|u|^2)^{n+\alpha-1}{\rm d}A(u)\cdot \delta_m^{\,-(n+1+\alpha)}\cdot d_m^{\,p}\lesssim d_m^{\,p}.
\end{split}
\end{equation*}
Since $\big\{2^{-|m|(n+1+\alpha)/p}\big\}\in \ell^1(\mathbb{Z})$, the discrete Young inequality gives $\sum_{m}d_m^{\,p}\lesssim \sum_{k}|c_k|^p$.
Hence 
\begin{equation}\label{equa4.7}
\begin{split}
&\quad \int_{\mathbb{D}\cap\big\{u: |1-u|<1/2\big\}}\left|\sum_{k}c_k\frac{(1-r_k^2)^{(n+1+\alpha)/p}}{(1-r_ku)^{2(n+1+\alpha)/p}}\right|^p(1-|u|^2)^{n+\alpha-1}{\rm d}A(u)\\
&=\sum_{m=1}^{\infty}\int_{E_m}\left|\sum_{k}c_k\frac{(1-r_k^2)^{(n+1+\alpha)/p}}{(1-r_ku)^{2(n+1+\alpha)/p}}\right|^p(1-|u|^2)^{n+\alpha-1}{\rm d}A(u)\\
&\lesssim \sum_{m=1}^{\infty}d_m^{\,p}\lesssim \sum_{k}|c_k|^p.
\end{split}
\end{equation}

On the other hand, if $u\in\mathbb{D}$ and $|1-u|\geq 1/2$, then $|1-r_k u|\gtrsim 1$. Hence, by H\"older's inequality, we obtain
\begin{equation}\label{equa4.8}
\begin{split}
&\quad \int_{\mathbb{D}\cap\big\{u: |1-u|\geq 1/2\big\}}\left|\sum_{k}c_k\frac{(1-r_k^2)^{(n+1+\alpha)/p}}{(1-r_ku)^{2(n+1+\alpha)/p}}\right|^p(1-|u|^2)^{n+\alpha-1}{\rm d}A(u)\\
&\lesssim \int_{\mathbb{D}}\left|\sum_{k}|c_k|\cdot \delta_k^{\,(n+1+\alpha)/p}\right|^p(1-|u|^2)^{n+\alpha-1}{\rm d}A(u)\\
&\lesssim \Big(\sum_{k}|c_k|^p\Big)\cdot \Big(\sum_{k}\delta_k^{\,(n+1+\alpha)/(p-1)}\Big)^{p-1}\lesssim \sum_{k}|c_k|^p.
\end{split}
\end{equation}
Therefore, combining \eqref{equa4.6}, \eqref{equa4.7} and \eqref{equa4.8}, we conclude that
\begin{align*}
\big\|\sum_{k}c_kf_k\big\|_{p,\alpha}^p\lesssim \sum_{k}|c_k|^p.
\end{align*}
 It remains to consider $0<p\leq1$. Lemma~\ref{lemma2.1} gives
\[
  \|f_k\|_{p,\alpha}^p
  \lesssim (1-r_k^2)^{n+1+\alpha}
  (1-r_k^2)^{-(n+1+\alpha)}\lesssim1.
\]
Hence, for $L>N$, subadditivity yields
\[
  \left\|\sum_{k=N+1}^Lc_kf_k\right\|_{p,\alpha}^p
  \leq\sum_{k=N+1}^L|c_k|^p\|f_k\|_{p,\alpha}^p
  \lesssim\sum_{k=N+1}^L|c_k|^p.
\]
Since $\{c_k\}\in\ell^p$,
\[
  \lim_{N\to\infty}\sup_{L>N}
  \left\|\sum_{k=N+1}^Lc_kf_k\right\|_{p,\alpha}^p=0.
\]
Taking $N=0$ and letting $L\to\infty$ gives the required estimate.  The proof is complete.
\end{proof}

\begin{lemma}\label{lemma4.6}
Write $B(z,\tau)$ for the Bergman metric ball centered at $z\in\mathbb{B}_n$ and with radius $\tau>0$. For $k\geq 1$, let $a_k=r_k\xi=(1-2^{-k})\xi$, then there exists $\tau_0>0$ such that $B(a_k,\tau_0)$ are pairwise disjoint.
\end{lemma}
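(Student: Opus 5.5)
The plan is to compute the Bergman distance between the points $a_k=r_k\xi$ explicitly and show that distinct points are uniformly separated. Then any radius $\tau_0$ smaller than half the separation constant works, by the triangle inequality for the Bergman metric $\beta(\cdot,\cdot)$.

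\emph{Step 1: reduction to the disk.} All the points $a_k$ lie on the complex line through $0$ and $\xi$. The automorphism $\varphi_{a_j}$ maps this slice into itself, and
\[
|\varphi_{a_j}(a_k)|=\left|\frac{r_j-r_k}{1-r_jr_k}\right|
\]
(see \cite[Chapter~1]{Zhu2005}). Since
\[
\beta(z,w)=\tfrac12\log\frac{1+|\varphi_z(w)|}{1-|\varphi_z(w)|},
\]
it suffices to bound $\rho_{jk}:=\dfrac{r_k-r_j}{1-r_jr_k}$ away from $1$ uniformly for $j<k$. In other words, we need a uniform lower bound on $1-\rho_{jk}$.

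\emph{Step 2: the key estimate.} We compute
\[
1-\rho_{jk}=\frac{(1-r_k)(1+r_j)}{1-r_jr_k}.
\]
Using $1-r_jr_k\le (1-r_j)+(1-r_k)=\delta_j+\delta_k\le 2\delta_j$ and $1+r_j\ge1$, we obtain
\[
1-\rho_{jk}\ge \frac{\delta_k}{2\delta_j}=2^{-(k-j)-1}.
\]
This bound only handles consecutive points uniformly; for points far apart, $\rho_{jk}$ tends to $1$, but then the distance is large, which is even better. In fact we only need a lower bound on $\beta(a_j,a_k)$, not an upper one.

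We therefore argue as follows. The map $k\mapsto\rho_{jk}$ is increasing in $k$ for fixed $j$, since the numerator increases and the denominator decreases. Hence $\rho_{jk}\ge\rho_{j,j+1}$. Using the bounds $1-r_jr_{j+1}\le\tfrac32\delta_j$ and $1-r_jr_{j+1}\ge\delta_j$, we get
\[
\rho_{j,j+1}=\frac{\delta_j/2}{1-r_jr_{j+1}}\ge\frac13 .
\]
Thus $\beta(a_j,a_k)\ge\tfrac12\log 2=:2\tau_0$ for all $j\ne k$.

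\emph{Step 3: conclusion.} If $z\in B(a_j,\tau_0)\cap B(a_k,\tau_0)$, the triangle inequality gives $\beta(a_j,a_k)<2\tau_0$, a contradiction. Hence the balls $B(a_k,\tau_0)$ are pairwise disjoint.

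The main point requiring care is the monotonicity and lower bound in Step 2, which consists of elementary inequalities with $\delta_j=2^{-j}$. If the paper defines $B(z,\tau)$ through the pseudo-hyperbolic distance $|\varphi_z(w)|$ instead of $\beta$, the same argument applies, with the constant $\tfrac13$ playing the role of the separation and $\tau_0<\tfrac16$ chosen accordingly. In that case the triangle inequality is replaced by the strong triangle inequality for the pseudo-hyperbolic metric, or one simply converts to $\beta$.
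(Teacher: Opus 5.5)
Your proof is correct and follows essentially the paper's argument. Both compute $|\varphi_{a_j}(a_k)|=\frac{|r_j-r_k|}{1-r_jr_k}$, bound it below by $\frac13$, deduce $\beta(a_j,a_k)\ge\frac12\log 2$, and conclude disjointness for any $\tau_0\le\frac14\log 2$ via the triangle inequality; the only difference is that the paper gets the $\frac13$ bound directly from the explicit dyadic formula $\frac{2^{-k}-2^{-j}}{2^{-k}+2^{-j}-2^{-k-j}}$, whereas you reduce to consecutive indices by monotonicity in $k$. Your preliminary lower bound on $1-\rho_{jk}$ in Step 2 plays no role and can be dropped.
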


\begin{proof}
For $a\in\mathbb{B}_n$, denote by $\varphi_a$ the involutive automorphism of $\mathbb{B}_n$ and write $\beta(z,w)$ for the Bergman metric between $z,w\in\mathbb{B}_n$. It is known that 
\[
    \beta(a_j,a_k)=\frac{1}{2}\log\frac{1+|\varphi_{a_j}(a_k)|}{1-|\varphi_{a_j}(a_k)|}.
\]
Assume $j>k\geq 1$, then
\[
    \big|\varphi_{a_j}(a_k)\big|=\left|\frac{a_j-a_k}{1-\langle a_j,a_k\rangle}\right|=\frac{r_j-r_k}{1-r_jr_k}=\frac{2^{-k}-2^{-j}}{2^{-k}+2^{-j}-2^{-k-j}}\geq \frac{1}{3}.
\]
Hence taking $\tau_0<\frac{1}{4}\log 2$, then $B(a_j,\tau_0)\cap B(a_k,\tau_0)=\emptyset$ whenever $j\neq k$.
\end{proof}

Let $\{\rho_j(x)\}$ denote the sequence of Rademacher functions, defined by 
\begin{equation*}
\rho_0(x)=\left\{
\begin{aligned}
&1, &\text{if}~~ 0\leq x-[x]<\frac{1}{2},\\
&-1, &\text{if}~~ \frac{1}{2}\leq x-[x]<1,
\end{aligned}
\right.
\end{equation*}
where $[x]$ denotes the largest integer not greater than $x$ and $\rho_{j}(x)=\rho_0(2^jx)$ for $j\geq 1$. If $0<p<\infty$, then Khinchine's inequality states that
\begin{equation*}
\left(\sum_{j}|c_j|^2\right)^{p/2}\simeq \int_{0}^1\left|\sum_{j}c_j\rho_j(x)\right|^p{\rm d}x
\end{equation*}
for any complex sequence $\{c_j\}$.

We are now ready to prove the necessity of Theorem \ref{theorem1.2}.

\begin{proposition}\label{proposition4.7}
Let $\alpha>-1$, $\beta>0$ and  \;  $0<q<p<\infty$. Set
\[
    \theta=\frac{pq}{p-q},\quad \lambda=\beta+(n+1+\alpha)\Big(\frac{1}{p}-\frac{1}{q}\Big).
\]
If $\mathcal{C}_{\mu,\beta}^{\xi}: A_{\alpha}^p(\mathbb{B}_n)\to A_{\alpha}^q(\mathbb{B}_n)$ is bounded, then $\mu$ is finite and 
\[
    \int_{0}^1\left(\frac{\hat{\mu}(r)}{(1-r)^{\lambda}}\right)^{\theta}\frac{{\rm d}r}{1-r}<\infty.
\]
\end{proposition}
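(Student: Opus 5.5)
Finiteness of $\mu$ comes first and is cheap. Apply the bounded operator to $f\equiv1$ and evaluate at $z=0$; point evaluation is continuous on $A^q_\alpha$. Then $\mathcal C^\xi_{\mu,\beta}1(0)=\mu([0,1))$, so $\mu$ is finite. By Lemma \ref{lemma2.7} it then suffices to prove $\{h_k\}\in\ell^\theta$, where $h_k=\hat\mu(r_k)/\delta_k^\lambda$.

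The plan is the standard duality argument with Rademacher functions and the test functions $f_k$ of Lemma \ref{lemma4.5}. Take $\{c_k\}\in\ell^p$ with $c_k\ge0$ (finitely supported, to avoid convergence issues) and set
\[
F_x=\sum_k c_k\rho_k(x)f_k .
\]
By Lemma \ref{lemma4.5}, $\|F_x\|_{p,\alpha}\lesssim\|c\|_{\ell^p}$ uniformly in $x$. Boundedness then gives
\[
\int_0^1\|\mathcal C^\xi_{\mu,\beta}F_x\|_{q,\alpha}^q\,{\rm d}x\lesssim\|c\|_{\ell^p}^q .
\]
Apply Fubini and Khinchine's inequality to obtain
\[
\int_{\mathbb B_n}\Big(\sum_k c_k^2|\mathcal C^\xi_{\mu,\beta}f_k(z)|^2\Big)^{q/2}{\rm d}v_\alpha(z)\lesssim\|c\|_{\ell^p}^q .
\]
Now restrict the integral to the pairwise disjoint Bergman balls $D_k=B(a_k,\tau_0)$ of Lemma \ref{lemma4.6}. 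On each $D_k$, keep only the $k$-th term of the square function, which gives
\[
\sum_k c_k^q\int_{D_k}|\mathcal C^\xi_{\mu,\beta}f_k|^q\,{\rm d}v_\alpha\lesssim\|c\|_{\ell^p}^q .
\]

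The key pointwise step is a lower bound for $|\mathcal C^\xi_{\mu,\beta}f_k(z)|$ when $z\in D_k$. For $z\in D_k$ we have $1-|z|\simeq\delta_k$ and $|1-\langle z,\xi\rangle|\simeq\delta_k$. I would prefer to avoid an estimate at a single point and instead use subharmonicity: the sub-mean value property for $|g|^q$ on Bergman balls gives
\[
\int_{D_k}|g|^q\,{\rm d}v_\alpha\gtrsim\delta_k^{n+1+\alpha}|g(a_k)|^q,
\]
so it suffices to bound $|g(a_k)|$ from below. At $a_k=r_k\xi$ every factor in the integrand is positive, since $1-r_k^2t$ and $1-tr_k$ are real and positive. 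Restricting to $t\in[r_k,1)$, as in Proposition \ref{proposition3.6}, yields
\[
\mathcal C^\xi_{\mu,\beta}f_k(a_k)\gtrsim \delta_k^{-(n+1+\alpha)/p-\beta}\,\hat\mu(r_k).
\]
Hence
\[
\int_{D_k}|\mathcal C^\xi_{\mu,\beta}f_k|^q\,{\rm d}v_\alpha\gtrsim \delta_k^{(n+1+\alpha)(1-q/p)-\beta q}\,\hat\mu(r_k)^q=h_k^q,
\]
because $\lambda q=\beta q+(n+1+\alpha)(q/p-1)$. This is the step I expect to need the most care. Two things must be checked: that the sub-mean value inequality with weight $v_\alpha$ on $B(a_k,\tau_0)$ has constant $\simeq\delta_k^{n+1+\alpha}$, which holds because $1-|z|^2\simeq\delta_k$ there; and that the positivity of the kernel at $a_k$ is used correctly.

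We conclude that $\sum_k c_k^qh_k^q\lesssim\|\{c_k^q\}\|_{\ell^{p/q}}$ for every nonnegative $\{c_k^q\}\in\ell^{p/q}$. Duality between $\ell^{p/q}$ and $\ell^{(p/q)'}$, with $(p/q)'=p/(p-q)$, gives $\{h_k^q\}\in\ell^{p/(p-q)}$, that is, $\{h_k\}\in\ell^{pq/(p-q)}=\ell^\theta$. To make this rigorous, first truncate to $k\le N$, so that all sums are finite and the constant is independent of $N$, and then let $N\to\infty$. Lemma \ref{lemma2.7} converts $\{h_k\}\in\ell^\theta$ into the integral condition and finishes the proof.
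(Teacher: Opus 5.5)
Your proposal is correct and follows essentially the same route as the paper. Both arguments get finiteness from $\mathcal{C}_{\mu,\beta}^{\xi}1(0)=\mu([0,1))$, randomize the test functions $f_k$ of Lemma~\ref{lemma4.5} with Rademacher functions and apply Khinchine's inequality, restrict to the disjoint balls $B(r_k\xi,\tau_0)$ of Lemma~\ref{lemma4.6} with the sub-mean value estimate at $r_k\xi$, and conclude with $\ell^{p/q}$--$\ell^{p/(p-q)}$ duality and Lemma~\ref{lemma2.7}. Your restriction to finitely supported sequences, with truncation at $k\le N$ before letting $N\to\infty$, adds a bit of rigor that the paper leaves implicit.
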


\begin{proof}
Assume $\mathcal{C}_{\mu,\beta}^{\xi}: A_{\alpha}^p(\mathbb{B}_n)\to A_{\alpha}^q(\mathbb{B}_n)$ is bounded, then by \cite[Theorem 2.1]{Zhu2005},
\[
    \mu([0,1))=\big(\mathcal{C}_{\mu,\beta}^{\xi}1\big)(0)\lesssim 1.
\]
For $k\geq 1$, recall that $r_k=1-\delta_k$ and $\delta_k=2^{-k}$. For any nonnegative sequence $\mathbf{c}=\{c_k\}\in \ell^p$, let
\[
    F_{\mathbf{c},p}(z)=\sum_{k=1}^{\infty}c_kf_k(z)=\sum_{k=1}^{\infty}c_k\frac{(1-r_k^2)^{(n+1+\alpha)/p}}{(1-r_k\langle z,\xi\rangle)^{2(n+1+\alpha)/p}},\quad z\in\mathbb{B}_n.
\]
By Lemma \ref{lemma4.5}, the boundedness of $\mathcal{C}_{\mu,\beta}^{\xi}$ implies that
\begin{equation*}
\big\|\mathcal{C}_{\mu,\beta}^{\xi}F_{\mathbf{c},p}\big\|_{q,\alpha}^q=\int_{\mathbb{B}_n}\left|\sum_{k}c_k\int_{[0,1)}\frac{f_k(tz)}{(1-t\langle z,\xi\rangle)^{\beta}}{\rm d}\mu(t)\right|^q{\rm d}v_{\alpha}(z)\lesssim \big\|\{c_k\}\big\|_{\ell^p}^q.
\end{equation*}
Replacing $c_k$ by $c_k\rho_k(x)$ in the above inequality and integrating both sides with respect to $x$ over the interval $[0,1]$, we obtain
\begin{equation*}
\int_{0}^1\int_{\mathbb{B}_n}\left|\sum_{k}c_k\rho_k(x)\int_{[0,1)}\frac{f_k(tz)}{(1-t\langle z,\xi\rangle)^{\beta}}{\rm d}\mu(t)\right|^q{\rm d}v_{\alpha}(z)\,{\rm d}x\lesssim \big\|\{c_k\}\big\|_{\ell^p}^q.
\end{equation*}
Then it follows from Khinchine's inequality and Fubini's theorem that
\begin{equation}\label{equa4.9}
\begin{split}
&\quad \int_{\mathbb{B}_n}\left(\sum_{k}|c_k|^2\Big|\int_{[0,1)}\frac{f_k(tz)}{(1-t\langle z,\xi\rangle)^{\beta}}{\rm d}\mu(t)\Big|^2\right)^{q/2}{\rm d}v_{\alpha}(z)\\
&\lesssim \int_{\mathbb{B}_n}\int_{0}^1\left|\sum_{k}c_k\rho_k(x)\int_{[0,1)}\frac{f_k(tz)}{(1-t\langle z,\xi\rangle)^{\beta}}{\rm d}\mu(t)\right|^q{\rm d}x\,{\rm d}v_{\alpha}(z)\\
&=\int_{0}^1\int_{\mathbb{B}_n}\left|\sum_{k}c_k\rho_k(x)\int_{[0,1)}\frac{f_k(tz)}{(1-t\langle z,\xi\rangle)^{\beta}}{\rm d}\mu(t)\right|^q{\rm d}v_{\alpha}(z)\,{\rm d}x\\
&\lesssim \|\{c_j\}\|_{\ell^p}^q.
\end{split}
\end{equation}
On the other hand, if $t\in [r_j,1)$, then
\[
    1-r_j^2 t\simeq 1-r_j t\simeq 1-r_j^2\simeq \delta_j.
\]
So we use Lemma \ref{lemma4.6} and \cite[Theorem 2.1]{Zhu2005} to obtain
\begin{equation}\label{equa4.10}
\begin{split}
&\quad \int_{\mathbb{B}_n}\left(\sum_{k}|c_k|^2\left|\int_{[0,1)}\frac{f_k(tz)}{(1-t\langle z,\xi\rangle)^{\beta}}{\rm d}\mu(t)\right|^2\right)^{q/2}{\rm d}v_{\alpha}(z)\\
&\geq \sum_{j=1}^{\infty}\int_{B(r_j\xi,\tau_0)}|c_j|^q\left|\int_{[0,1)}\frac{f_j(tz)}{(1-t\langle z,\xi\rangle)^{\beta}}{\rm d}\mu(t)\right|^q{\rm d}v_{\alpha}(z)\\
&\gtrsim \sum_{j=1}^{\infty}|c_j|^q(1-r_j^2)^{n+1+\alpha}\left(\int_{[0,1)}\frac{f_j(t r_j\xi)}{(1-t r_j)^{\beta}}{\rm d}\mu(t)\right)^q\\
&\gtrsim \sum_{j=1}^{\infty}|c_j|^q(1-r_j^2)^{n+1+\alpha}\left(\int_{[r_j,1)}\frac{(1-r_j^2)^{(n+1+\alpha)/p}}{(1-t r_j^2)^{2(n+1+\alpha)/p}(1-t r_j)^{\beta}}{\rm d}\mu(t)\right)^q\\
&\simeq \sum_{j=1}^{\infty}|c_j|^q\left(\frac{\hat{\mu}(r_j)}{\delta_j^{\,\lambda}}\right)^q.
\end{split}
\end{equation}
Therefore, combining \eqref{equa4.9} and \eqref{equa4.10}, we get
\begin{equation*}
\sum_{j=1}^{\infty}|c_j|^q\left(\frac{\hat{\mu}(r_j)}{\delta_j^{\lambda}}\right)^q\lesssim \|\{c_j\}\|_{\ell^p}^q.
\end{equation*}
This means the sequence $\big\{\big(\frac{\hat{\mu}(r_j)}{\delta_j^{\lambda}}\big)^q\big\}$ belongs to the dual of $\ell^{\frac{p}{q}}$, or equivalently,
\begin{equation*}
\sum_{j=1}^{\infty}\left(\frac{\hat{\mu}(r_j)}{\delta_j^{\lambda}}\right)^{\frac{pq}{p-q}}<\infty.
\end{equation*}
By Lemma \ref{lemma2.7}, this is then equivalent to
\begin{equation*}
\int_{0}^1\left(\frac{\hat{\mu}(r)}{(1-r)^{\lambda}}\right)^{\theta}\frac{{\rm d}r}{1-r}<\infty.
\end{equation*}
The proof is now complete.
\end{proof}

{
\begin{proposition}\label{proposition4.8}
Let $\alpha>-1$, $\beta>0$ and $0<q<\infty$, and set
\[
    \lambda=\beta-\frac{n+1+\alpha}{q}.
\]
If $\mathcal{C}_{\mu,\beta}^{\xi}:H^{\infty}(\mathbb{B}_n)\to A_{\alpha}^q(\mathbb{B}_n)$ is bounded, then $\mu$ is finite and
\[
    \left\{\frac{\hat{\mu}(r_k)}{\delta_k^{\lambda}}\right\}_{k=1}^{\infty}\in\ell^q.
\]
\end{proposition}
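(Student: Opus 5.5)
Finiteness of $\mu$ comes first: the constant function $1$ lies in $H^\infty(\mathbb B_n)$, and $(\mathcal C_{\mu,\beta}^{\xi}1)(0)=\mu([0,1))$. The pointwise estimate \cite[Theorem 2.1]{Zhu2005} at $z=0$ then gives $\mu([0,1))\lesssim\|\mathcal C_{\mu,\beta}^{\xi}1\|_{q,\alpha}\lesssim 1$.

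For the sequence condition I would simply test on $f\equiv1$, since $\|1\|_\infty=1$. In that case
\[
\mathcal C_{\mu,\beta}^{\xi}1(z)=\int_{[0,1)}\frac{{\rm d}\mu(t)}{(1-t\langle z,\xi\rangle)^{\beta}}.
\]
Boundedness gives $\int_{\mathbb B_n}|\mathcal C_{\mu,\beta}^{\xi}1|^q\,{\rm d}v_\alpha<\infty$. The difficulty is that Lemma \ref{lemma4.3} concerns $V_\mu^\xi$, whose integrand carries an absolute value inside the $t$-integral, whereas here we only control the modulus of the integral. I therefore need a pointwise lower bound for $|\mathcal C_{\mu,\beta}^{\xi}1(z)|$ in terms of $\hat\mu(r_k)/\delta_k^\beta$ on a region of measure $\simeq\delta_k^{n+1+\alpha}$.

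To get this, I would use the real part: for $\operatorname{Re} a>0$ one has $\operatorname{Re}\,a^{-\beta}=|a|^{-\beta}\cos(\beta\arg a)$. Set $a=1-t\langle z,\xi\rangle$, so $\operatorname{Re} a>0$. The sign of $\cos(\beta\arg a)$ is not controlled for large $\beta$, so I would restrict to a region where $\arg a$ is small. Concretely, take the Bergman balls $B(r_k\xi,\tau_0)$ from Lemma \ref{lemma4.6}, with $\tau_0$ possibly shrunk. For $z$ in such a ball, $u=\langle z,\xi\rangle$ satisfies $|1-u|\simeq\delta_k$ and $|\operatorname{Im} u|\le c\,\delta_k$ with $c$ small. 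Then for every $t\in[0,1)$ we get $|\operatorname{Im}(1-tu)|\le tc\delta_k$ and $\operatorname{Re}(1-tu)\ge(1-t)+t\operatorname{Re}(1-u)\gtrsim\delta_k$ for $t$ near 1. So $|\arg(1-tu)|\le\pi/(4\beta)$ once $c$ is small, uniformly in $t$; for $t$ away from 1 this is trivial after shrinking. Consequently every contribution has positive real part, and on the ball
\[
|\mathcal C_{\mu,\beta}^{\xi}1(z)|\ge \operatorname{Re}\,\mathcal C_{\mu,\beta}^{\xi}1(z)\gtrsim\int_{[r_k,1)}\frac{{\rm d}\mu(t)}{|1-tu|^\beta}\gtrsim\frac{\hat\mu(r_k)}{\delta_k^\beta},
\]
using $|1-tu|\lesssim\delta_k$ for $t\ge r_k$.

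It remains to sum over the disjoint balls of volume $v_\alpha(B(r_k\xi,\tau_0))\simeq\delta_k^{n+1+\alpha}$:
\[
\infty>\|\mathcal C_{\mu,\beta}^{\xi}1\|_{q,\alpha}^q\gtrsim\sum_k\delta_k^{n+1+\alpha}\Big(\frac{\hat\mu(r_k)}{\delta_k^\beta}\Big)^q=\sum_k\Big(\frac{\hat\mu(r_k)}{\delta_k^{\lambda}}\Big)^q,
\]
since $\lambda=\beta-(n+1+\alpha)/q$.

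The main obstacle is the uniform angle control, namely choosing $\tau_0$ (equivalently $c$) depending on $\beta$ so that $\cos(\beta\arg(1-t\langle z,\xi\rangle))\ge\cos(\pi/4)$ for all $t$ and all $z$ in the balls. If handling whole Bergman balls is awkward, I would instead use the smaller sets $\{z:\ |1-\langle z,\xi\rangle-\delta_k|<c\delta_k,\ |z|^2\le \operatorname{Re}\langle z,\xi\rangle\}$. I would check that their $v_\alpha$-measure is $\gtrsim\delta_k^{n+1+\alpha}$ via the slice reduction behind Lemma \ref{lemma4.1} together with a variant of Lemma \ref{lemma4.2} in a small disc around $1-\delta_k$.
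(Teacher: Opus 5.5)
Your proposal is correct and is essentially the paper's argument. Both test on $f\equiv1$ and use $1-tu=(1-t)+t(1-u)$ to get $|\arg(1-tu)|\le|\arg(1-u)|$ uniformly in $t\in[0,1)$, so your case split in $t$ is unnecessary. Both then deduce ${\rm Re}\,(1-tu)^{-\beta}\gtrsim|1-tu|^{-\beta}$ and sum the lower bound $\hat\mu(r_k)\delta_k^{-\beta}$ over disjoint regions of $v_\alpha$-mass $\simeq\delta_k^{n+1+\alpha}$.

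The only difference is the choice of regions. The paper slices to the disk as in Lemma~\ref{lemma4.1} and uses the sectors $S_k=\{u\in\mathbb D:\delta_{k+1}<|1-u|\le\delta_k,\ |\arg(1-u)|<\varepsilon\}$ with $\varepsilon<\min\{\pi/3,\pi/(2\beta)\}$, which is essentially your fallback. Your Bergman balls $B(r_k\xi,\tau_0)$ work directly in $\mathbb B_n$, but they require the ellipsoidal description of Bergman balls to justify $|{\rm Im}\langle z,\xi\rangle|\le c\,\delta_k$ with $c\to0$ as $\tau_0\to0$; you assert this without deriving it.
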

}

\begin{proof}
Suppose that $\mathcal{C}_{\mu,\beta}^{\xi}:H^{\infty}(\mathbb{B}_n)\to A_{\alpha}^q(\mathbb{B}_n)$ is bounded. Taking $f\equiv1$, we obtain
$F:=\mathcal{C}_{\mu,\beta}^{\xi}1\in A_{\alpha}^q(\mathbb{B}_n)$ and
\[
    \mu([0,1))=F(0)<\infty.
\]
After a unitary change of variables, we may assume that $\xi=e_1$ and write $F(z)=G(z_1)$, where
\[
    G(u)=\int_{[0,1)}(1-tu)^{-\beta}{\rm d}\mu(t),
    \qquad u\in\mathbb{D}.
\]
As in the proof of Lemma \ref{lemma4.1},
\[
    \|F\|_{q,\alpha}^{q}\simeq
    \int_{\mathbb{D}}|G(u)|^q
    (1-|u|^2)^{n+\alpha-1}{\rm d}A(u).
\]

Choose $0<\varepsilon<\min\{\pi/3,\pi/(2\beta)\}$ and, for $k\geq1$, set
\[
    S_k=\big\{u\in\mathbb{D}:\delta_{k+1}<|1-u|\leq\delta_k,
    \ |\arg(1-u)|<\varepsilon\big\}.
\]
Since $1-tu=(1-t)+t(1-u)$, if $u\in S_k$, then
\[
    |{\rm arg}(1-tu)|<\varepsilon,
    \qquad
    |{\rm arg}(1-tu)^{-\beta}|\leq\beta\varepsilon<\frac{\pi}{2}.
\]
Hence
\[
    {\rm Re}(1-tu)^{-\beta}
    \geq \cos(\beta\varepsilon)|1-tu|^{-\beta}\geq0.
\]
When $t\in[r_k,1)$ and $u\in S_k$, we have $|1-tu|\leq2\delta_k$. Therefore
\begin{align*}
|G(u)|\geq{\rm Re}G(u)
&=\int_{[0,1)}{\rm Re}(1-tu)^{-\beta}{\rm d}\mu(t)\\
&\gtrsim\int_{[r_k,1)}{\rm Re}(1-tu)^{-\beta}{\rm d}\mu(t)\\
&\gtrsim\delta_k^{-\beta}\hat{\mu}(r_k).
\end{align*}
Moreover, the same computation as in Lemma \ref{lemma4.2} gives
\[
    \int_{S_k}(1-|u|^2)^{n+\alpha-1}{\rm d}A(u)
    \simeq\delta_k^{n+1+\alpha}.
\]
Since the sets $S_k$ are pairwise disjoint,
\begin{align*}
\|F\|_{q,\alpha}^{q}
&\gtrsim\sum_{k\geq1}\int_{S_k}|G(u)|^q
(1-|u|^2)^{n+\alpha-1}{\rm d}A(u)\\
&\gtrsim\sum_{k\geq1}
\left(
\frac{\hat{\mu}(r_k)}
{\delta_k^{\,\beta-(n+1+\alpha)/q}}
\right)^q
=
\sum_{k\geq1}
\left(\frac{\hat{\mu}(r_k)}{\delta_k^{\lambda}}\right)^q.
\end{align*}
Thus the asserted $\ell^q$ condition follows.
\end{proof}

\begin{proof}[{\bf Proof of Theorem 1.2}]
The sufficiency follows from Proposition \ref{proposition4.4}, and the necessity follows from Proposition \ref{proposition4.7}\ {for $p<\infty$, while for $p=\infty$ it follows from Proposition \ref{proposition4.8}}.
\end{proof}

\section{Proof of Theorem 1.3}

We first consider the boundedness of $\mathcal{C}_{\mu,\beta}^{\xi}$ on
$H^\infty(\mathbb{B}_n)$. The following result establishes assertion~{(iii)} of
Theorem~\ref{theorem1.3}.

\begin{theorem}\label{theorem5.1}
Let $\alpha>-1$, $\beta>0$ and $\mu$ be a positive Borel measure on $[0,1)$. Then $\mathcal C_{\mu,\beta}^{\xi}: H^{\infty}(\mathbb{B}_n)\to H^{\infty}(\mathbb{B}_n)$ is bounded if and only if 
\begin{equation}\label{equa5.1}
    \int_{[0,1)}\frac{{\rm d}\mu(t)}{(1-t)^{\beta}}<\infty.
\end{equation}
\end{theorem}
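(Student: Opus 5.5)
Sufficiency is a direct pointwise estimate. For $f\in H^\infty(\mathbb B_n)$ and $z\in\mathbb B_n$, we have $|f(tz)|\le\|f\|_\infty$. Also
\[
|1-t\langle z,\xi\rangle|\ge 1-t|\langle z,\xi\rangle|\ge 1-t.
\]
Hence
\[
|\mathcal C_{\mu,\beta}^{\xi}f(z)|\le \|f\|_\infty\int_{[0,1)}\frac{{\rm d}\mu(t)}{(1-t)^{\beta}}.
\]
Holomorphy of $\mathcal C_{\mu,\beta}^{\xi}f$ follows from the same domination. On each compact set $|z|\le R$ the integrand is holomorphic in $z$ and bounded by $\|f\|_\infty(1-t)^{-\beta}$, which is $\mu$-integrable by \eqref{equa5.1}. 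Morera's theorem (or differentiation under the integral sign) then applies. This gives boundedness with norm at most $\int_{[0,1)}(1-t)^{-\beta}{\rm d}\mu(t)$.

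For necessity, I would test on $f\equiv1$. Boundedness gives
\[
F:=\mathcal C_{\mu,\beta}^{\xi}1\in H^\infty(\mathbb B_n),\qquad \|F\|_\infty\le C.
\]
Evaluate along the radius towards $\xi$, i.e. at $z=r\xi$ with $0<r<1$:
\[
F(r\xi)=\int_{[0,1)}\frac{{\rm d}\mu(t)}{(1-tr)^{\beta}}.
\]
Here $1-tr>0$, so the integrand is positive and real, and there is no cancellation problem with the complex power. Thus $\int_{[0,1)}(1-tr)^{-\beta}{\rm d}\mu(t)\le C$ for every $r<1$. As $r\uparrow1$, the integrand increases monotonically to $(1-t)^{-\beta}$. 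The monotone convergence theorem then yields $\int_{[0,1)}(1-t)^{-\beta}{\rm d}\mu(t)\le C<\infty$, which is \eqref{equa5.1}.

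The only point needing care is the choice of the real radial direction $z=r\xi$. It avoids the difficulty of controlling $|G(u)|$ from below via real parts, which was needed in Proposition~\ref{proposition4.8}. With this choice, positivity of the kernel is automatic.
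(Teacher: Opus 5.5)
Your proposal is correct and follows the paper's proof essentially verbatim. For sufficiency you use the pointwise bound $|1-t\langle z,\xi\rangle|\ge 1-t$, and for necessity you test $f\equiv1$ at $z=r\xi$ and apply monotone convergence; your added justification of holomorphy via the integrable majorant $(1-t)^{-\beta}$ is a detail the paper leaves implicit.
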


\begin{proof}
Assume first that the integral condition in \eqref{equa5.1} holds. Then for any $f\in H^{\infty}(\mathbb{B}_n)$ and $z\in\mathbb{B}_n$, we have
\begin{align*}
\left|\big(\mathcal{C}_{\mu,\beta}^{\xi}f\big)(z)\right|\leq \|f\|_{\infty}\cdot \int_{[0,1)}\frac{{\rm d}\mu(t)}{|1-t\langle z,\xi\rangle|^{\beta}}\leq \|f\|_{\infty}\cdot \int_{[0,1)}\frac{{\rm d}\mu(t)}{(1-t)^{\beta}}.
\end{align*}
Thus $\mathcal{C}_{\mu,\beta}^{\xi}$ maps $H^{\infty}(\mathbb{B}_n)$ boundedly into $H^{\infty}(\mathbb{B}_n)$.

Conversely, suppose $\mathcal{C}_{\mu,\beta}^{\xi}: H^{\infty}(\mathbb{B}_n)\to H^{\infty}(\mathbb{B}_n)$ is bounded. Taking $f\equiv 1$, we get
\[
    \big(\mathcal{C}_{\mu,\beta}^{\xi}1\big)(r\xi)=\int_{[0,1)}\frac{{\rm d}\mu(t)}{(1-rt)^{\beta}}.
\]
The left hand side is bounded uniformly in $r$. Then the monotone convergence theorem gives
\[
    \int_{[0,1)}\frac{{\rm d}\mu(t)}{(1-t)^{\beta}}<\infty.\qedhere
\]
\end{proof}

We next treat the endpoint case $p=1$ in assertion~{(i)} of
Theorem~\ref{theorem1.3}. The corresponding characterization is as follows.

\begin{theorem}\label{theorem5.2}
Let $\alpha>-1$, $\beta>0$ and $\mu$ be a positive Borel measure on $[0,1)$. Then $\mathcal C_{\mu,\beta}^{\xi}: A_{\alpha}^1(\mathbb{B}_n)\to H^{\infty}(\mathbb{B}_n)$ is bounded if and only if $\mu$ is a $(\beta+n+1+\alpha)$-Carleson measure on $[0,1)$, that is
\begin{equation}\label{equa5.0002}
    \sup_{r\in [0,1)}\frac{\hat{\mu}(r)}{(1-r)^{\beta+n+1+\alpha}}<\infty.
\end{equation}
\end{theorem}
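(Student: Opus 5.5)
Necessity uses the same test functions as Proposition~\ref{proposition3.6}, and sufficiency reduces to a pointwise kernel estimate via Lemma~\ref{lemmaKR}.

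\emph{Necessity.} Assume $\mathcal C_{\mu,\beta}^{\xi}:A_\alpha^1(\mathbb B_n)\to H^\infty(\mathbb B_n)$ is bounded. Take $f_{r,1}(z)=(1-r^2)^{n+1+\alpha}(1-r\langle z,\xi\rangle)^{-2(n+1+\alpha)}$; by Lemma~\ref{lemma2.1}, $\|f_{r,1}\|_{1,\alpha}\simeq1$. Evaluate at $z=r\xi$, where every integrand is positive, and keep only $t\in[r,1)$. Using $1-r^2t\simeq1-rt\simeq1-r$ there, we get
\[
\|\mathcal C_{\mu,\beta}^{\xi}f_{r,1}\|_\infty\ge \mathcal C_{\mu,\beta}^{\xi}f_{r,1}(r\xi)\gtrsim \frac{\hat\mu(r)}{(1-r)^{\beta+n+1+\alpha}}.
\]
Taking the supremum over $r$ gives \eqref{equa5.0002}.

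\emph{Sufficiency.} Assume \eqref{equa5.0002}. Use Lemma~\ref{lemmaKR} with $\delta=0$. Since $|f|\le\|f\|_{1,\alpha}$ in $L^1_\alpha$, it suffices to prove the uniform bound
\[
\sup_{z,w\in\mathbb B_n}\int_{[0,1)}\frac{{\rm d}\mu(t)}{|1-t\langle z,\xi\rangle|^\beta\,|1-t\langle z,w\rangle|^{n+1+\alpha}}<\infty .
\]
For this, bound $|1-t\langle z,\xi\rangle|\ge1-t$ and $|1-t\langle z,w\rangle|\ge 1-t$. The integral is then at most $\int_{[0,1)}(1-t)^{-(\beta+n+1+\alpha)}{\rm d}\mu(t)$. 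Note that this crude step asks for more than \eqref{equa5.0002}: a $s$-Carleson condition does not make $\int(1-t)^{-s}{\rm d}\mu$ finite, since that integral can diverge logarithmically. So it has to be replaced by the finer dyadic argument.

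\emph{Main step (dyadic estimate).} This mirrors the $q=\infty$ case of Lemma~\ref{lemma3.3}. Split $\mu$ over the intervals $I_k$. For $t\in I_k$ use $|1-ta|\simeq\delta_k+|1-a|$, and bound the $\xi$-factor below by $\delta_k^\beta$. The condition gives $\mu(I_k)\lesssim\delta_k^{\beta+n+1+\alpha}$. The sum to control is therefore
\[
\sum_k \frac{\delta_k^{n+1+\alpha}}{(\delta_k+|1-\langle z,w\rangle|)^{n+1+\alpha}}.
\]
This is exactly the borderline case $M=\lambda-\beta$ of Lemma~\ref{lemma3.3}, so the geometric decay there is lost.

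The fix is to reintroduce the factor $(1-|w|^2)^{\delta}$ with $\delta>0$ from Lemma~\ref{lemmaKR}, and to integrate in $w$ rather than bound pointwise in $w$ alone. Concretely, I would instead prove boundedness of $f\mapsto \mathcal C f$ on the atoms $a^{(1)}_{w,M}$ with $M>n+1+\alpha$. This is Lemma~\ref{lemma3.3} with $p=1$, $q=\infty$. Its proof only used $M>\lambda-\beta$ and works verbatim for $p=1$, giving
\[
|\mathcal C_{\mu,\beta}^{\xi}a^{(1)}_{w,M}(z)|\lesssim \frac{(1-|w|^2)^{M-(n+1+\alpha)}}{|1-\langle z,w\rangle|^{M-(n+1+\alpha)}}\lesssim1 .
\]
Then write $f=\sum_j c_ja^{(1)}_{w_j,M}$ with $\sum|c_j|\lesssim\|f\|_{1,\alpha}$ by Proposition~\ref{proposition2.4}. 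Since $\ell^1$ summability is all that is needed, $\sup_z|\mathcal C f(z)|\le\sum_j|c_j|\sup_{w}\|\mathcal C a_{w,M}^{(1)}\|_\infty$. The interchange of sum and integral is justified exactly as in Proposition~\ref{proposition3.5}, using finiteness of $\mu$ and local uniform convergence. The main obstacle is the borderline summation, and the atomic route with $M>n+1+\alpha$ resolves it.
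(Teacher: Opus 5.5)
Your necessity argument is the paper's. For sufficiency you take a genuinely different route, and it is correct. The proof of Lemma~\ref{lemma3.3} really uses only $\lambda-\beta>0$ and $M>\lambda-\beta$. At $p=1$, $q=\infty$ we have $\lambda-\beta=n+1+\alpha$, and Proposition~\ref{proposition2.4} requires exactly $M>n+1+\alpha$. So the lemma gives
\[
\sup_w\|\mathcal C_{\mu,\beta}^{\xi}a^{(1)}_{w,M}\|_\infty\lesssim\sup_r\hat\mu(r)/(1-r)^{\beta+n+1+\alpha}.
\]
The $\ell^1$ summation and the interchange argument of Proposition~\ref{proposition3.5} then go through verbatim.

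The paper does not use atoms. It keeps the kernel representation of Lemma~\ref{lemmaKR}, but with $\delta>0$, and shows the kernel is bounded \emph{pointwise}, uniformly in $(z,w)$. It bounds $|1-t\langle z,\xi\rangle|\ge 1-t$ and $|1-t\langle z,w\rangle|\ge 1-t|w|$. Then Lemma~\ref{lemma2.6} with $s=\beta+n+1+\alpha$, $\rho=\beta$, $\gamma=\delta$ gives
\[
\sup_{w\in\mathbb B_n}\int_{[0,1)}\frac{(1-|w|)^{\delta}\,{\rm d}\mu(t)}{(1-t)^{\beta}(1-t|w|)^{n+1+\alpha+\delta}}<\infty,
\]
and hence $|\mathcal C_{\mu,\beta}^{\xi}f(z)|\lesssim\|f\|_{1,\alpha}$ immediately.

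Your side remark that with $\delta>0$ one must ``integrate in $w$ rather than bound pointwise'' is therefore not accurate. The factor $(1-|w|^2)^{\delta}$ is exactly what removes the logarithmic divergence you correctly diagnosed at $\delta=0$, after which the pointwise bound holds. This does not affect your proof, since your atomic argument is self-contained.

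The paper's route is shorter and avoids atomic decomposition. It even shows that the positive majorant $\mathcal B_{\mu,\beta,\delta}^{\xi}$ is bounded $L^1_\alpha\to L^\infty$. Your route extends Proposition~\ref{proposition3.5} to $p=1$, so it handles all of $0<p\le 1$ in Theorem~\ref{theorem1.3}(i) with a single argument instead of treating $p=1$ separately.
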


\begin{proof}
Assume first that $\mu$ is a $(\beta+n+1+\alpha)$-Carleson measure on $[0,1)$. For any $\delta>0$ and $z\in\mathbb{B}_n$, by Lemma \ref{lemma2.6}, we have
\begin{align*}
&\quad\sup_{w\in \mathbb{B}_n}\int_{[0,1)}\frac{(1-|w|)^{\delta}}{\left|1-t\langle z,\xi\rangle\right|^{\beta}|1-t\langle z,w\rangle|^{n+1+\alpha+\delta}}{\rm d}\mu(t)\\
&\lesssim \sup_{w\in\mathbb{B}_n}\int_{[0,1)}\frac{(1-|w|)^{\delta}}{(1-t)^{\beta}(1-t|w|)^{n+1+\alpha+\delta}}{\rm d}\mu(t)<\infty.
\end{align*}
Then for any $f\in A_{\alpha}^1(\mathbb{B}_n)$, we get 
\begin{align*}
\big|\big(\mathcal{C}_{\mu,\beta}^{\xi}f\big)(z)\big|&\leq \left(\mathcal{B}_{\mu,\beta,\delta}^{\xi}|f|\right)(z)\\
&\simeq \int_{\mathbb{B}_n}|f(w)|\left(\int_{[0,1)}\frac{(1-|w|^2)^{\delta}}{|1-t\langle z,\xi\rangle|^{\beta}|1-t\langle z,w\rangle|^{n+1+\alpha+\delta}}{\rm d}\mu(t)\right){\rm d}v_{\alpha}(w)\\
&\lesssim \int_{\mathbb{B}_n}|f(w)|{\rm d}v_{\alpha}(w)
\end{align*}
uniformly in $z\in\mathbb{B}_n$. Consequently, $\mathcal C_{\mu,\beta}^{\xi}$  is bounded from $A_{\alpha}^1(\mathbb{B}_n)$ to $H^{\infty}(\mathbb{B}_n)$.

Conversely, assume that $\mathcal{C}_{\mu,\beta}^{\xi}: A_{\alpha}^1(\mathbb{B}_n)\to H^{\infty}(\mathbb{B}_n)$ is bounded. For any $r\in [0,1)$, define
\[
    f_r(z)=\frac{(1-r^2)^{n+1+\alpha}}{(1-r\langle z,\xi\rangle)^{2(n+1+\alpha)}},\quad z\in\mathbb{B}_n.
\]
By Lemma \ref{lemma2.1}, $f_r\in A_{\alpha}^1(\mathbb{B}_n)$ and $\|f_r\|_{1,\alpha}\simeq 1$. Then
\[
    \sup_{r\in [0,1)}\left|\mathcal{C}_{\mu,\beta}^{\xi}f_r(r\xi)\right|<\infty.
\]
On the other hand,
\begin{align*}
\mathcal{C}_{\mu,\beta}^{\xi}f_r(r\xi)&=\int_{[0,1)}\frac{(1-r^2)^{n+1+\alpha}}{(1-r^2t)^{2(n+1+\alpha)}(1-tr)^{\beta}}{\rm d}\mu(t)\\
&\geq \int_{[r,1)}\frac{(1-r^2)^{n+1+\alpha-\beta}}{(1-r^3)^{2(n+1+\alpha)}}{\rm d}\mu(t)\simeq \frac{\hat{\mu}(r)}{(1-r)^{n+1+\alpha+\beta}}.
\end{align*}
Therefore,
\[
    \sup_{r\in [0,1)}\frac{\hat{\mu}(r)}{(1-r)^{n+1+\alpha+\beta}}<\infty,
\]
which means that $\mu$ is an $(n+1+\alpha+\beta)$-Carleson measure on $[0,1)$.
\end{proof}

 \begin{proof}[Proof of Theorem~\ref{theorem1.3}(i) for $0<p<1$]
Set
\[
  \lambda=\beta+\frac{n+1+\alpha}{p}.
\]
If $\mu$ is a $\lambda$-Carleson measure, then Proposition~\ref{proposition3.5} gives
\[
  \|\mathcal C_{\mu,\beta}^{\xi}f\|_\infty
  \lesssim
  \left(\sup_{0\leq s<1}\frac{\hat\mu(s)}{(1-s)^\lambda}\right)
  \|f\|_{p,\alpha},
  \qquad f\in A_\alpha^p(\mathbb B_n).
\]
This proves sufficiency.

Conversely, suppose that
$\mathcal C_{\mu,\beta}^{\xi}:A_\alpha^p(\mathbb B_n)\to
H^\infty(\mathbb B_n)$ is bounded. Taking $f\equiv1$ and evaluating at the
origin first shows that $\mu([0,1))<\infty$. For $0\leq r<1$, define the
normalized peak function
\[
  f_{r,p}(z)=\frac{(1-r^2)^{(n+1+\alpha)/p}}{(1-r\langle z,\xi\rangle)^{2(n+1+\alpha)/p}}.
\]
By Lemma~\ref{lemma2.1}, $\|f_{r,p}\|_{p,\alpha}\simeq1$, uniformly in
$r$. Since $\mathcal C_{\mu,\beta}^{\xi}$ is bounded,

\[
  \sup_{0\leq r<1}
  \left|\big(\mathcal C_{\mu,\beta}^{\xi}f_{r,p}\big)(r\xi)\right|<\infty.
\]
On the other hand, positivity of the integrand on the radius through $\xi$ gives, for each $0\leq r<1$,

\[
{
\begin{aligned}
  \big(\mathcal C_{\mu,\beta}^{\xi}f_{r,p}\big)(r\xi)
  &=\int_{[0,1)}
  \frac{(1-r^2)^{(n+1+\alpha)/p}}{(1-tr^2)^{2(n+1+\alpha)/p}(1-tr)^\beta}
  {\rm d}\mu(t)\\
  &\geq\int_{[r,1)}
  \frac{(1-r^2)^{(n+1+\alpha)/p}}{(1-tr^2)^{2(n+1+\alpha)/p}(1-tr)^\beta}
  {\rm d}\mu(t).
\end{aligned}}
\]
For $t\in[r,1)$, we have
$1-tr^2\leq1-r^3\simeq1-r$, $1-tr\leq1-r^2\simeq1-r$, and
$1-r^2\simeq1-r$. Consequently,

\[
  \big(\mathcal C_{\mu,\beta}^{\xi}f_{r,p}\big)(r\xi)
  \gtrsim
  \frac{\hat\mu(r)}{(1-r)^{\beta+(n+1+\alpha)/p}}.
\]
Taking the supremum over $r$ proves necessity.
Together with Theorem~\ref{theorem5.2}, this establishes
Theorem~\ref{theorem1.3}(i) for the full range $0<p\leq1$.
\end{proof}

 Now we consider the case $\mathcal{C}_{\mu,\beta}^{\xi}: 
 A_{\alpha}^p(\mathbb{B}_n)\to H^{\infty}(\mathbb{B}_n)$ for $1<p<\infty$. To this end, we need the following lemma.

\begin{lemma}\label{lemma5.3}
Let $0<p<\infty$ and $I_k=[1-2^{-k}, 1-2^{-(k+1)})$ for $k\geq 1$. Then
\begin{equation}\label{equa5.2}
\sum_{k=1}^{\infty}2^{-k(n+1+\alpha)}\sup_{t\in I_k}|f(tz)|^p\lesssim \|f\|_{p,\alpha}^p
\end{equation}
for all $f\in A_{\alpha}^p(\mathbb{B}_n)$ and $z\in\mathbb{B}_n$.
\end{lemma}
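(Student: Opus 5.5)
The plan is to reduce the estimate to the pointwise subharmonic (sub-mean-value) estimate on Bergman metric balls, as in \cite[Theorem 2.1]{Zhu2005}, centred at the points $tz$, and then to use the separation of dyadic annuli. By unitary invariance of ${\rm d}v_\alpha$ and of the Bergman metric, I may fix $z\in\mathbb B_n$ with $z\neq0$ (the case $z=0$ is trivial, since then $|f(0)|^p\lesssim\|f\|_{p,\alpha}^p$ and $\sum_k2^{-k(n+1+\alpha)}<\infty$). Write $z=|z|\eta$ with $\eta\in\partial\mathbb B_n$.

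\textbf{Step 1: pointwise bound.} Fix a radius $\tau>0$. For any point $a\in\mathbb B_n$ we have
\[
|f(a)|^p\lesssim\frac{1}{(1-|a|^2)^{n+1+\alpha}}\int_{B(a,\tau)}|f(w)|^p{\rm d}v_\alpha(w).
\]
For $t\in I_k$ put $a=tz$. Then $1-|a|\ge 1-t\ge 2^{-(k+1)}$, so
\[
2^{-k(n+1+\alpha)}|f(tz)|^p\lesssim\int_{B(tz,\tau)}|f|^p{\rm d}v_\alpha .
\]
Here $1-|tz|$ may be much larger than $2^{-k}$, but that only helps, since the bound uses $(1-|a|^2)^{-(n+1+\alpha)}\le C2^{k(n+1+\alpha)}$.

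\textbf{Step 2: a containing region.} As $t$ ranges over $I_k$, all the balls $B(tz,\tau)$ lie in a set $\Omega_k$ whose points satisfy
\[
1-|w|\simeq\max\{1-|z|,\,2^{-k}\}.
\]
More precisely, $\Omega_k$ is contained in the Bergman ball-neighbourhood of the radial segment $\{s\eta: s\in[r_k|z|,\,r_{k+1}|z|]\}$. Taking the supremum over $t\in I_k$ therefore gives
\[
2^{-k(n+1+\alpha)}\sup_{t\in I_k}|f(tz)|^p\lesssim\int_{\Omega_k}|f|^p{\rm d}v_\alpha .
\]

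\textbf{Step 3: bounded overlap (the main obstacle).} It then suffices to show that the sets $\Omega_k$ have bounded overlap, uniformly in $z$, after which summing over $k$ gives \eqref{equa5.2}.

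When $2^{-k}\ge 1-|z|$, the segments for different $k$ are Bergman-separated by a fixed distance, as in Lemma \ref{lemma4.6}, since the points $r_k|z|\eta$ have $1-|\cdot|$ comparable to $2^{-k}$. With $\tau$ small, only boundedly many $\Omega_k$ meet.

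When $2^{-k}<1-|z|$, the whole segment $\{t|z|\eta: t\in I_k\}$ lies within Bergman distance $O(1)$ of the point $|z|\eta$. So there is overlap among all such $k$. To handle this, I would not use the region estimate for these $k$. Instead I would bound them directly: for these $k$,
\[
\sup_{t\in I_k}|f(tz)|^p\lesssim (1-|z|)^{-(n+1+\alpha)}\|f\|_{p,\alpha}^p,
\]
using the pointwise estimate together with $1-|tz|\ge 1-|z|$. Then
\[
\sum_{k:\,2^{-k}<1-|z|}2^{-k(n+1+\alpha)}\lesssim (1-|z|)^{n+1+\alpha},
\]
which exactly cancels the factor $(1-|z|)^{-(n+1+\alpha)}$. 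The remaining $k$, those with $2^{-k}\ge1-|z|$, are handled by the disjoint-ball argument above.

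In summary, the delicate point is verifying the geometric claim in Step 3: the Bergman neighbourhoods of the dyadic radial segments $\{s\eta:1-s\in(2^{-k-1}|z|,\,2^{-k}]\}$ have uniformly bounded overlap for $2^{-k}\gtrsim1-|z|$. I would check this via the formula for $|\varphi_a(b)|$ on the radius, as in Lemma \ref{lemma4.6}.
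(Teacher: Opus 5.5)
Your proposal is correct and follows essentially the paper's proof. The paper chooses $N$ with $r_N\le|z|<r_{N+1}$, which is exactly your split $2^{-k}\ge 1-|z|$ versus $2^{-k}<1-|z|$. For $k\ge N+1$ it uses the global pointwise bound together with $\sum_{k\ge N+1}2^{-k(n+1+\alpha)}\lesssim(1-|z|)^{n+1+\alpha}$. For $k\le N$, instead of your segment neighbourhoods $\Omega_k$, it uses a single ball $B(r_kz,\tau')$ that contains every $B(tz,\tau)$ with $t\in I_k$, and it gets bounded overlap from $1-|w|\simeq 2^{-k}$ on that ball.

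One small correction: adjacent segments share an endpoint, so they are not Bergman-separated by a fixed distance as you state. The bounded overlap you actually need still follows directly from $1-|w|\simeq 2^{-k}$ on $\Omega_k$, just as in the paper.
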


\begin{proof}
When $|z|\leq 1/2$, by the standard pointwise estimate \cite[Theorem 2.1]{Zhu2005}, we get
\begin{equation}\label{equa5.03}
    \sup_{t\in I_k}|f(tz)|^p\lesssim \frac{1}{(1-t^2|z|^2)^{n+1+\alpha}}\|f\|_{p,\alpha}^p\lesssim \|f\|_{p,\alpha}^p,\quad \forall k\geq 1.
\end{equation}
Then \eqref{equa5.2} holds by the convergence of $\sum_{k=1}^{\infty}2^{-k(n+1+\alpha)}$.

Now we assume that $|z|>1/2$. For $k\geq 1$, denote by $r_k=1-2^{-k}$. Choose $N\geq 1$ such that
\[
    r_N\leq |z|<r_{N+1}.
\]
Set $z_k=r_k z$. For $k\leq N$ and $t\in I_k$, we have
$2^{-k}\leq 1-|z_k|\leq 2^{-(k-1)}$, and
\[
    \left|\frac{z_k-tz}{1-\langle z_k, tz\rangle}\right|\leq \frac{|t-r_k|}{1-tr_k|z|^2}\leq \frac{|t-r_k|}{1-r_kt}\leq \frac{1}{2}.
\]
Then there exists $\tau, \tau'>0$ such that $B(tz,\tau)\subset B(z_k,\tau')$. Using \cite[Theorem 2.1]{Zhu2005} again, we get
\begin{align*}
2^{-k(n+1+\alpha)}\sup_{t\in I_k}|f(tz)|^p&\lesssim \frac{2^{-k(n+1+\alpha)}}{(1-t^2|z|^2)^{n+1+\alpha}}\int_{D(tz,\tau)}|f(w)|^p{\rm d}v_{\alpha}(w)\\
&\lesssim \int_{B(z_k,\tau')}|f(w)|^p{\rm d}v_{\alpha}(w).
\end{align*}
If $k\neq j$ and $w\in B(z_k,\tau')\cap B(z_j,\tau')$, then \cite[Lemma 2.20]{Zhu2005} gives
\[
    1-|w|^2\simeq 1-|z_k|^2\simeq 1-|z_j|^2,
\]
which implies that $2^{-k}\simeq 2^{-j}$. Hence $|k-j|\leq C$ for some $C>0$. It follows that
\[
\sum_{k=1}^{\infty}\chi_{B(z_k,\tau')}\lesssim 1.
\]
Therefore, when $r_N\leq |z|<r_{N+1}$, we obtain
\begin{equation}\label{equa5.3}
\sum_{k=1}^{N}2^{-k(n+1+\alpha)}\sup_{t\in I_k}|f(tz)|^p\lesssim \sum_{k=1}^{\infty}\int_{B(z_k,\tau')}|f(w)|^p{\rm d}v_{\alpha}(w)
\lesssim \|f\|_{p,\alpha}^{p}.
\end{equation}
On the other hand, for $k\geq N+1$ and $t\in I_k$, we have
\begin{equation*}
\left|\frac{tz-z}{1-\langle tz,z\rangle}\right|\leq \frac{|z|}{1+|z|}\frac{1-t}{1-|z|}\leq \frac{2^{-k}}{2^{-(N+1)}}\frac{|z|}{1+|z|}\leq \frac{1}{2}.
\end{equation*}
Moreover,
\[
    \sum_{k\geq N+1}2^{-k(n+1+\alpha)}\lesssim (1-|z|)^{n+1+\alpha}.
\]
Hence
\begin{equation}\label{equa5.4}
\begin{split}
&\quad \sum_{k\geq N+1}2^{-k(n+1+\alpha)}\sup_{t\in I_k}|f(tz)|^p\\
&\lesssim \sum_{k\geq N+1}2^{-k(n+1+\alpha)}\sup_{t\in I_k}\frac{1}{(1-t^2 |z|^2)^{n+1+\alpha}}\int_{B(tz,\tau)}|f(w)|^p{\rm d}v_{\alpha}(w)\\
&\lesssim \frac{\sum_{k\geq N+1}2^{-k(n+1+\alpha)}}{(1-|z|^2)^{n+1+\alpha}}\int_{B(z,\tau')}|f(w)|^p{\rm d}v_{\alpha}(w)\lesssim \|f\|_{p,\alpha}^p.
\end{split}
\end{equation}
Combining \eqref{equa5.3} and \eqref{equa5.4}, we obtain
\begin{equation}\label{equa5.6}
\sum_{k=1}^{\infty}2^{-k(n+1+\alpha)}\sup_{t\in I_k}|f(tz)|^p\lesssim \|f\|_{p,\alpha}^p,\quad |z|>\frac{1}{2}.
\end{equation}
The proof is now complete by combining \eqref{equa5.03} and \eqref{equa5.6}.
\end{proof}

\begin{theorem}\label{theorem5.4}
Let $\alpha>-1$, $\beta>0$ and $1<p<\infty$.  Then $\mathcal{C}_{\mu,\beta}^{\xi}: A_{\alpha}^p(\mathbb{B}_n)\to H^{\infty}(\mathbb{B}_n)$ is bounded if and only if $\mu$ is finite and 
\begin{equation}\label{equa5.7}
    \left\{\frac{\hat{\mu}(r_k)}{\delta_k^{\,\beta+(n+1+\alpha)/p}}\right\}_{k=1}^{\infty}\in \ell^{p'},
\end{equation}
where $1/p+1/p'=1$.
\end{theorem}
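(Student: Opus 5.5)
Set $\lambda_p=\beta+(n+1+\alpha)/p$ and $h_k=\hat\mu(r_k)/\delta_k^{\lambda_p}$. The plan is to prove sufficiency from Lemma~\ref{lemma5.3} together with H\"older's inequality in $\ell^p$--$\ell^{p'}$, and necessity from a superposition of the test functions $f_k$ of Lemma~\ref{lemma4.5}, combined with duality.

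\emph{Sufficiency.} Fix $f\in A_\alpha^p(\mathbb B_n)$ and $z\in\mathbb B_n$. Split $[0,1)=I_0\cup\bigcup_{k\ge1}I_k$. On $I_0$ we have $|1-t\langle z,\xi\rangle|\ge 1/2$ and $|f(tz)|\lesssim \|f\|_{p,\alpha}$ by the pointwise estimate, so this piece is bounded by $\mu([0,1))\|f\|_{p,\alpha}$. For $t\in I_k$ we use $|1-t\langle z,\xi\rangle|\ge 1-t\gtrsim\delta_k$. This gives
\[
|\mathcal C_{\mu,\beta}^{\xi}f(z)|\lesssim \mu([0,1))\|f\|_{p,\alpha}+\sum_{k\ge1}\frac{\mu(I_k)}{\delta_k^{\beta}}\sup_{t\in I_k}|f(tz)|
=\cdots+\sum_{k\ge1}\frac{\mu(I_k)}{\delta_k^{\lambda_p}}\cdot \delta_k^{(n+1+\alpha)/p}\sup_{t\in I_k}|f(tz)|.
\]
Apply H\"older's inequality with exponents $p',p$, use $\mu(I_k)\le\hat\mu(r_k)$, and invoke Lemma~\ref{lemma5.3}. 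The sum is then bounded by $\|\{h_k\}\|_{\ell^{p'}}\|f\|_{p,\alpha}$, uniformly in $z$.

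\emph{Necessity.} Finiteness of $\mu$ follows by testing $f\equiv1$ at $z=0$. Take a nonnegative sequence $\{c_k\}\in\ell^p$ and let $F=\sum_k c_kf_k$. By Lemma~\ref{lemma4.5}, $\|F\|_{p,\alpha}\lesssim\|\{c_k\}\|_{\ell^p}$, and by boundedness $\sup_r|\mathcal C_{\mu,\beta}^{\xi}F(r\xi)|\lesssim\|\{c_k\}\|_{\ell^p}$. On the radius $z=r\xi$ every term is positive, since $f_k(tr\xi)>0$ and $(1-tr)^{-\beta}>0$. We may therefore drop terms. Fix $N$ and evaluate at $r=r_N$, keeping for each $k\le N$ only the part of the integral over $t\in[r_k,1)$. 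For such $t$ and $k\le N$, we have $1-r_kr_Nt\lesssim\delta_k$, which gives $f_k(tr_N\xi)\gtrsim\delta_k^{-(n+1+\alpha)/p}$, and $1-tr_N\le 1-r_kr_N\lesssim\delta_k$. Hence
\[
\mathcal C_{\mu,\beta}^{\xi}F(r_N\xi)\gtrsim\sum_{k=1}^N c_k\frac{\hat\mu(r_k)}{\delta_k^{\lambda_p}}=\sum_{k\le N}c_kh_k,
\]
with constants independent of $N$. Letting $N\to\infty$ yields $\sum_k c_kh_k\lesssim\|\{c_k\}\|_{\ell^p}$ for all nonnegative $\{c_k\}\in\ell^p$. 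By $\ell^p$--$\ell^{p'}$ duality this gives $\{h_k\}\in\ell^{p'}$.

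The main point needing care is the uniformity of the comparison $1-r_kr_Nt\simeq\delta_k$ for all $k\le N$ and $t\ge r_k$. This holds because $1-r_kr_Nt\le(1-r_k)+(1-r_N)+(1-t)\le3\delta_k$, and the lower bounds are not needed. The sufficiency direction is essentially immediate from Lemma~\ref{lemma5.3}. One should also check that $F$ converges locally uniformly, so that $\mathcal C_{\mu,\beta}^{\xi}F$ may be computed termwise by monotone convergence on the positive radius; this follows because the summands are positive there.
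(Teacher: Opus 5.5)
Your proposal is correct and follows the paper's proof. Sufficiency goes through the dyadic splitting of $[0,1)$, the bound $|1-t\langle z,\xi\rangle|\ge 1-t$, H\"older's inequality and Lemma~\ref{lemma5.3}. Necessity uses the superposition $F_{\mathbf c,p}=\sum_k c_kf_k$ from Lemma~\ref{lemma4.5}, positivity on the radius through $\xi$, and $\ell^p$--$\ell^{p'}$ duality.

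The only difference is cosmetic. The paper applies duality at each fixed $r$, lets $r\uparrow1$ via monotone convergence and Fatou's lemma, and then bounds $A_k\gtrsim \hat\mu(r_k)/\delta_k^{\beta+(n+1+\alpha)/p}$. You instead evaluate at $r=r_N$ and keep only the terms $k\le N$, which avoids the limiting step.
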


\begin{proof}
Assume first that $\mu$ is finite and \eqref{equa5.7} holds. For $k\geq 1$, denote by $I_k=[r_k,r_{k+1})$ and $\mu_k=\mu(I_k)$. Then the assumption gives 
\[
    \left\{\frac{\mu_k}{\delta_k^{\beta+(n+1+\alpha)/p}}\right\}\in \ell^{p'}.
\] 
For any $z\in\mathbb{B}_n$ and $f\in A_{\alpha}^p(\mathbb{B}_n)$, we have
\begin{align*}
\big|\mathcal{C}_{\mu,\beta}^{\xi}f(z)\big|&\leq \int_{[0,1)}\frac{|f(tz)|}{(1-t)^{\beta}}{\rm d}\mu(t)\\
&\lesssim \sum_{k=1}^{\infty}\mu_k\delta_k^{-\beta}\cdot\sup_{t\in I_k}|f(tz)|+\mu([0,1/2))\cdot\sup_{t\in [0,1/2]}|f(tz)|.
\end{align*}
By the standard pointwise estimate in $A_{\alpha}^p(\mathbb{B}_n)$ (\cite[Theorem 2.1]{Zhu2005}), we get
\begin{equation}\label{equa5.8}
\sup_{t\in [0,1/2]}|f(tz)|
\lesssim \sup_{t\in[0,1/2]}
\frac{\|f\|_{p,\alpha}}{(1-t^2|z|^2)^{(n+1+\alpha)/p}}
\lesssim \|f\|_{p,\alpha},\quad z\in \mathbb{B}_n.
\end{equation}
By H\"older's inequality and Lemma \ref{lemma5.3}, we get
\begin{equation}\label{equa5.9}
\begin{split}
&\quad\sum_{k=1}^{\infty}\mu_k\delta_k^{-\beta}\cdot\sup_{t\in I_k}|f(tz)|\\
&\lesssim \left(\sum_{k=1}^{\infty}\Big(\frac{\mu_k}{\delta_k^{\beta+(n+1+\alpha)/p}}\Big)^{p'}\right)^{1/p'}\cdot\left(\sum_{k=1}^{\infty}\delta_k^{n+1+\alpha}\sup_{t\in I_k}|f(tz)|^p\right)^{1/p}\lesssim \|f\|_{p,\alpha}.
\end{split}
\end{equation}
Therefore, combining \eqref{equa5.8} and \eqref{equa5.9}, we obtain
\begin{align*}
\|\mathcal{C}_{\mu,\,\beta}^{\xi}f\|_{\infty}\lesssim \|f\|_{p,\alpha},\quad \forall f\in A_{\alpha}^p(\mathbb{B}_n).
\end{align*}
This shows the boundedness  of $\mathcal{C}_{\mu,\,\beta}^{\xi}$ from $A_{\alpha}^p(\mathbb{B}_n)$ to $H^{\infty}(\mathbb{B}_n)$.

Conversely, assume that $\mathcal{C}_{\mu,\,\beta}^{\xi}: A_{\alpha}^p(\mathbb{B}_n)\to H^{\infty}(\mathbb{B}_n)$ is bounded. Taking $f\equiv 1$, we get
\[
    \mu([0,1))=\big(\mathcal C_{\mu,\,\beta}^{\xi}1\big)(0)\lesssim 1.
\] 
Recall that the function 
\[
    f_k(z)=\frac{(1-r_k^2)^{(n+1+\alpha)/p}}{(1-r_k\langle z,\xi\rangle)^{2(n+1+\alpha)/p}},\quad k\geq 1,
\] 
and for any sequence $\mathbf{c}=\{c_k\}\in \ell^p$,
\[
    F_{\mathbf{c},p}(z)=\sum_{k=1}^{\infty}c_kf_k(z).
\]
By Lemma \ref{lemma4.5}, the boundedness of $\mathcal{C}_{\mu,\beta}^{\xi}$ implies that
\begin{equation*}
\sup_{z\in\mathbb{B}_n}\big|\big(\mathcal{C}_{\mu,\beta}^{\xi}F_{\mathbf{c},p}\big)(z)\big|\lesssim \|\{c_k\}\|_{\ell^p}.
\end{equation*} 
It follows that
\begin{align*}
&\quad \sup_{r\in (0,1)}\Bigg|\sum_{k=1}^{\infty}c_k\big(\mathcal{C}_{\mu,\beta}^{\xi}f_k\big)(r\xi)\Bigg|\\
&=\sup_{r\in (0,1)}\Bigg|\sum_{k=1}^{\infty}c_k\int_{[0,1)}\frac{(1-r_k^2)^{(n+1+\alpha)/p}}{(1-tr_kr)^{2(n+1+\alpha)/p}(1-tr)^{\beta}}{\rm d}\mu(t)\Bigg|\\
&\lesssim \|\{c_k\}\|_{\ell^p}.
\end{align*}
Hence, by $\ell^p$--$\ell^{p'}$ duality, for every $r\in (0,1)$ the sequence
\[
   \left\{\int_{[0,1)}\frac{(1-r_k^2)^{(n+1+\alpha)/p}}{(1-tr_kr)^{2(n+1+\alpha)/p}(1-tr)^{\beta}}{\rm d}\mu(t)\right\}_{k=1}^{\infty}
   \in \ell^{p'},
\]

\par\smallskip

{and its $\ell^{p'}$ norm is bounded uniformly in $r$.
For each $k$, the displayed coefficient increases as $r\uparrow1$. By the
monotone convergence theorem it tends to
\[
  A_k:=\int_{[0,1)}
  \frac{(1-r_k^2)^{(n+1+\alpha)/p}}{(1-tr_k)^{2(n+1+\alpha)/p}(1-t)^{\beta}}{\rm d}\mu(t).
\]
}

\par\smallskip

{Fatou's lemma for the counting measure now gives
\[
  \sum_{k=1}^{\infty}A_k^{p'}
  \leq\liminf_{r\uparrow1}\sum_{k=1}^{\infty}
  \left(\int_{[0,1)}\frac{(1-r_k^2)^{(n+1+\alpha)/p}}{(1-tr_kr)^{2(n+1+\alpha)/p}(1-tr)^{\beta}}{\rm d}\mu(t)\right)^{p'}
  <\infty.
\]
Finally, for $t\in[r_k,1)$,
$1-tr_k\lesssim\delta_k$, $1-t\leq\delta_k$, and
$1-r_k^2\simeq\delta_k$. Therefore
\[
  A_k\geq\int_{[r_k,1)}
  \frac{(1-r_k^2)^{(n+1+\alpha)/p}}{(1-tr_k)^{2(n+1+\alpha)/p}(1-t)^{\beta}}{\rm d}\mu(t)
  \gtrsim \frac{\hat\mu(r_k)}{\delta_k^{\beta+(n+1+\alpha)/p}}.
\]
Thus \eqref{equa5.7} holds.}

The proof is now complete.
\end{proof}

\begin{proof}[{\bf Proof of Theorem 1.3}]
{Item (i) follows from Theorem \ref{theorem5.2} and the proof for
$0<p<1$ given immediately after it. Item (ii) follows from Theorem
\ref{theorem5.4}, and item (iii) follows from Theorem \ref{theorem5.1}.}
\end{proof}

\begin{remark}
The characterization of the boundedness of $\mathcal{C}_{\mu,\beta}^{\xi}$ between weighted Bergman spaces and $H^{\infty}(\mathbb{B}_n)$ can be summarized in a unified form. In fact, by the standard dyadic decomposition of $[0,1)$ and the geometric Hardy inequality, condition \eqref{equa5.1} is equivalent to 
\[
    \mu([0,1))<\infty
    \quad\text{and}\quad
    \left\{\delta_k^{-\beta}\hat{\mu}(r_k)\right\}_{k=1}^{\infty}\in \ell^1.
\]
  More generally, for $0<p\leq1$, the condition in Theorem
\ref{theorem1.3}(i)  is equivalent to
\[
    \mu([0,1))<\infty
    \quad\text{and}\quad
    \left\{\frac{\hat{\mu}(r_k)}{\delta_k^{\beta+(n+1+\alpha)/p}}\right\}_{k=1}^{\infty}\in \ell^{\infty}.
\]
  At  $p=1$  this is exactly \eqref{equa5.0002} .

{Finally, when $p=\infty$, Theorem \ref{theorem1.2} has
$\theta=q$ and $\lambda=\beta-(n+1+\alpha)/q$, so its discrete condition becomes
\[
    \mu([0,1))<\infty
    \quad\text{and}\quad
    \left\{\frac{\hat{\mu}(r_k)}{\delta_k^{\beta-(n+1+\alpha)/q}}\right\}\in \ell^q,
\]
which is exactly the endpoint necessity established in Proposition \ref{proposition4.8}.}
\end{remark}

% ------------------------------------------------------------------------

\noindent{\bf Acknowledgments}
Tong was supported in part by the National Natural Science Foundation of China (Grant Nos. 12171136 and 12411530045). Yang was supported in part by the National Natural Science Foundation of China (Grant No. 12501103) and the Natural Science Foundation of Hebei Province (Grant No. A2023202031).

\noindent{\bf Declaration}
The authors declare that they have no conflicts of interest. No data were used for the research described in this article.


\begin{thebibliography}{99}

 \bibitem{AguilarHernandezMasPelaezRattya2026}
T. Aguilar-Hern\'andez, A. Mas, J. \'A. Pel\'aez and J. R\"atty\"a,
Maximal theorems for weighted analytic tent and mixed norm spaces,
\emph{J. Funct. Anal.} \textbf{291} (2026), no.\ 3,
Paper No.\ 111513. \href{https://doi.org/10.1016/j.jfa.2026.111513}{doi:10.1016/j.jfa.2026.111513}.

 \bibitem{AlemanSiskakis1997}
A. Aleman and A. G. Siskakis,
Integration operators on Bergman spaces,
\emph{Indiana Univ. Math. J.} \textbf{46} (1997), no. 2, 337--356. \href{https://doi.org/10.1512/iumj.1997.46.1373}{doi:10.1512/iumj.1997.46.1373}.
\bibitem{Andersen1996}
K. F. Andersen,
Ces\`aro averaging operators on Hardy spaces,
\emph{Proc. Roy. Soc. Edinburgh Sect. A} \textbf{126} (1996), 617--624. \href{https://doi.org/10.1017/S0308210500022939}{doi:10.1017/S0308210500022939}.
\bibitem{BHS1965}
A. Brown, P. R. Halmos and A. L. Shields,
Ces\`aro operators,
\emph{Acta Sci. Math. (Szeged)} \textbf{26} (1965), 125--137. \href{https://acta.bibl.u-szeged.hu/14062/}{URL}.
\bibitem{BaoSunWulan2022}
G. Bao, F. Sun and H. Wulan,
Carleson measures and the range of a Ces\`aro-like operator acting on $H^\infty$,
\emph{Anal. Math. Phys.} \textbf{12} (2022), Paper No. 142. \href{https://doi.org/10.1007/s13324-022-00752-z}{doi:10.1007/s13324-022-00752-z}.
\bibitem{BlascoMas2025}
{\'O}. Blasco and A. Mas,
Ces{\`a}ro-type operators on mixed norm spaces,
\emph{Trans. Amer. Math. Soc.}, Advance Publication (2026). \href{https://doi.org/10.1090/tran/9746}{doi:10.1090/tran/9746}.
\bibitem{ChangLiStevic2007}
D.-C. Chang, S. Li and S. Stevi\'c,
On some integral operators on the unit polydisk and the unit ball,
\emph{Taiwanese J. Math.} \textbf{11} (2007), 1251--1285. \href{https://doi.org/10.11650/twjm/1500404862}{doi:10.11650/twjm/1500404862}.
\bibitem{GGM2022}
P. Galanopoulos, D. Girela and N. Merch\'an,
Ces\`aro-like operators acting on spaces of analytic functions,
\emph{Anal. Math. Phys.} \textbf{12} (2022), Paper No. 51. \href{https://doi.org/10.1007/s13324-022-00649-x}{doi:10.1007/s13324-022-00649-x}.
\bibitem{GalGirMer2023}
P. Galanopoulos, D. Girela and N. Merch\'an,
Ces\`aro-type operators associated with Borel measures on the unit disc acting on some Hilbert spaces of analytic functions,
\emph{J. Math. Anal. Appl.} \textbf{526} (2023), Paper No. 127287. \href{https://doi.org/10.1016/j.jmaa.2023.127287}{doi:10.1016/j.jmaa.2023.127287}.
\bibitem{GalMasMer2023}
P. Galanopoulos, D. Girela, A. Mas and N. Merch\'an,
Operators induced by radial measures acting on the Dirichlet space,
\emph{Results Math.} \textbf{78} (2023), Paper No. 106. \href{https://doi.org/10.1007/s00025-023-01887-6}{doi:10.1007/s00025-023-01887-6}.
\bibitem{GSZ2025}
P. Galanopoulos, A. G. Siskakis and R. Zhao,
Weighted Ces\`aro type operators between weighted Bergman spaces,
\emph{Bull. Sci. Math.} \textbf{202} (2025), Paper No. 103622. \href{https://doi.org/10.1016/j.bulsci.2025.103622}{doi:10.1016/j.bulsci.2025.103622}.
\bibitem{Hu2003MixedNorm}
Z. Hu,
Extended Ces\`aro operators on mixed norm spaces,
\emph{Proc. Amer. Math. Soc.} \textbf{131} (2003), no. 7, 2171--2179. \href{https://doi.org/10.1090/S0002-9939-02-06777-1}{doi:10.1090/S0002-9939-02-06777-1}.
\bibitem{Hu2004Bergman}
Z. Hu,
Extended Ces\`aro operators on Bergman spaces,
\emph{J. Math. Anal. Appl.} \textbf{296} (2004), 435--454. \href{https://doi.org/10.1016/j.jmaa.2004.01.045}{doi:10.1016/j.jmaa.2004.01.045}.
\bibitem{JinTang2022}
J. Jin and S. Tang,
Generalized Ces\`aro operators on Dirichlet-type spaces,
\emph{Acta Math. Sci. Ser. B (Engl. Ed.)} \textbf{42} (2022), 212--220. \href{https://doi.org/10.1007/s10473-022-0111-2}{doi:10.1007/s10473-022-0111-2}.
\bibitem{LiStevic2009}
S. Li and S. Stevi{\'c},
Ces{\`a}ro-type operators on some spaces of analytic functions on the unit ball,
\emph{Appl. Math. Comput.} \textbf{208} (2009), no. 2, 378--388. \href{https://doi.org/10.1016/j.amc.2008.12.006}{doi:10.1016/j.amc.2008.12.006}.
\bibitem{Okikiolu1970}
G. O. Okikiolu,
On inequalities for integral operators,
\emph{Glasgow Math. J.} \textbf{11} (1970), 126--133. \href{https://doi.org/10.1017/S0017089500000975}{doi:10.1017/S0017089500000975}.
\bibitem{Siskakis1987}
A. G. Siskakis,
Composition semigroups and the Ces\`aro operator on $H^p$,
\emph{J. London Math. Soc. (2)} \textbf{36} (1987), 153--164. \href{https://doi.org/10.1112/jlms/s2-36.1.153}{doi:10.1112/jlms/s2-36.1.153}.
\bibitem{Siskakis1990}
A. G. Siskakis,
The Ces\`aro operator is bounded on $H^1$,
\emph{Proc. Amer. Math. Soc.} \textbf{110} (1990), 461--462. \href{https://doi.org/10.1090/S0002-9939-1990-1021904-9}{doi:10.1090/S0002-9939-1990-1021904-9}.
\bibitem{Siskakis1996}
A. G. Siskakis,
On the Bergman space norm of the Ces\`aro operator,
\emph{Arch. Math. (Basel)} \textbf{67} (1996), 312--318. \href{https://doi.org/10.1007/BF01197596}{doi:10.1007/BF01197596}.
\bibitem{Stempak1994}
K. Stempak,
Ces\`aro averaging operators,
\emph{Proc. Roy. Soc. Edinburgh Sect. A} \textbf{124} (1994), 121--126. \href{https://doi.org/10.1017/S030821050002922X}{doi:10.1017/S030821050002922X}.
\bibitem{ZhangLiShangGuo2018}
X. Zhang, S. Li, Q. Shang and Y. Guo,
An integral estimate and the equivalent norms on $F(p,q,s,k)$ spaces in the unit ball,
\emph{Acta Math. Sci. Ser. B (Engl. Ed.)} \textbf{38} (2018), 1861--1880. \href{https://doi.org/10.1016/S0252-9602(18)30852-X}{doi:10.1016/S0252-9602(18)30852-X}.
\bibitem{Zhao2015}
R. Zhao,
Generalization of Schur's test and its application to a class of integral operators on the unit ball of $\mathbb C^n$,
\emph{Integral Equations Operator Theory} \textbf{82} (2015), 519--532. \href{https://doi.org/10.1007/s00020-014-2215-0}{doi:10.1007/s00020-014-2215-0}.
\bibitem{ZhaoZhou2022}
R. Zhao and L. Zhou,
$L^p$--$L^q$ boundedness of Forelli--Rudin type operators on the unit ball of $\mathbb C^n$,
\emph{J. Funct. Anal.} \textbf{282} (2022), Paper No. 109345 \ \href{https://doi.org/10.1016/j.jfa.2021.109345}{doi:10.1016/j.jfa.2021.109345}.

\bibitem{ZhaoZhu2008}
R. Zhao and K. Zhu,
\emph{Theory of Bergman Spaces in the Unit Ball of \(\mathbb C^n\)},
M\'emoires de la Soci\'et\'e Math\'ematique de France, no.\ 115,
Soci\'et\'e Math\'ematique de France, Paris, 2008. \href{https://doi.org/10.24033/msmf.427}{doi:10.24033/msmf.427}.
\bibitem{Zhu2005}
K. Zhu,
\emph{Spaces of Holomorphic Functions in the Unit Ball},
Graduate Texts in Mathematics, vol. 226,
Springer-Verlag, New York, 2005. \href{https://doi.org/10.1007/0-387-27539-8}{doi:10.1007/0-387-27539-8}.
\end{thebibliography}
\end{document}